\newtheorem{prop}{Proposition}[section]
\newtheorem{thm}[prop]{Theorem}
\newtheorem{lemma}[prop]{Lemma}
\newtheorem{cor}[prop]{Corollary}
\newtheorem{definition}[prop]{{Definition}}
\newtheorem{remark}[prop]{{Remark}}
\newtheorem{Example}[prop]{Example}
\newcommand{\cA}{{\mathcal A}}
\newcommand{\cB}{{\mathcal B}}
\newcommand{\cQ}{{\mathcal Q}}
\newcommand{\cM}{{\mathcal M}}
\newcommand{\cG}{{\mathcal G}}
\newcommand{\cC}{{\mathcal C}}
\newcommand{\cS}{{\mathcal S}}
\newcommand{\K}{{ K}}
\renewcommand{\aa}
\newcommand{\ov}{\overline}
\newcommand{\Hom}{\operatorname{Hom}\nolimits}
\newcommand{\Ker}{\operatorname{Ker}\nolimits}
\newcommand{\soc}{\operatorname{soc}\nolimits}
\newcommand{\rad}{\operatorname{rad}\nolimits}
\renewcommand{\dim}{\operatorname{dim}\nolimits}
\newcommand{\val}{\operatorname{val}\nolimits}
\newcommand{\Qg}{{\mathcal Q}_{\Gamma}}
\newcommand{\oc}{\operatorname{occ}\nolimits}
\newcommand{\bc}{Brauer configuration}
\newcommand{\bca}{\bc\ algebra}
\definecolor{candyapplered}{rgb}{1.0, 0.03, 0.0}
\def\thm@space@setup{%
  \thm@preskip=0.7cm \thm@postskip=0.3cm
}
\begin{document}

\date{today}
\title[Brauer configuration algebras]
{Brauer configuration algebras: A generalization of Brauer graph algebras}

\author[Green]{Edward L.\ Green}
\address{Edward L.\ Green, Department of
Mathematics\\ Virginia Tech\\ Blacksburg, VA 24061\\
USA}
\email{green@math.vt.edu}
\author[Schroll]{Sibylle Schroll}
\address{Sibylle Schroll\\
Department of Mathematics \\
University of Leicester \\
University Road \\
Leicester LE1 7RH \\
United Kingdom}
\email{schroll@le.ac.uk }

\subjclass[2010]{16G20, 16D50} 

\keywords{Brauer graph algebra, multiserial algebra, biserial algebra, symmetric radical cubed zero algebra, adjacency matrix of a graph}
\thanks{This work was supported through the Engineering and Physical Sciences Research Council, grant number EP/K026364/1, UK and by the University of Leicester in form of a study leave for the second author.  }

\begin{abstract} 
In this paper we introduce a generalization of a Brauer graph algebra which we call
a \bca. As with Brauer graphs and Brauer graph algebras, to each Brauer configuration, there is an associated Brauer configuration algebra.  We show that \bca s are finite dimensional symmetric algebras.  After studying and analysing structural properties
of \bc s and  \bca s, we show that a \bca\ is multiserial;  that is, its Jacobson radical
is a sum of uniserial modules whose pairwise intersection is either
zero or a simple module.   The paper ends with a detailed study of the relationship
between radical cubed zero \bca s,  symmetric matrices with non-negative integer entries, finite graphs and associated symmetric radical cubed zero algebras.
\end{abstract}

\date{\today}
\maketitle


\section*{Introduction}

The classification of algebras into finite, tame and wild representation type has led to many structural insights in the representation theory of finite dimensional algebras. Algebras have finite representation type if there are only finitely many isomorphism classes of indecomposable modules. In this case, the representation theory is usually very
well understood and these algebras often serve as first examples or test cases for new ideas and conjectures.  To cite but a few examples in this direction see for example \cite{D1,D2, Ma} or \cite{LY, Rickard}.


All other algebras have infinite representation type, and by \cite{Drozd} they are either tame or wild. If an algebra has tame representation type its representation theory usually still exhibits a certain regularity in its structure making calculations and establishing and proving conjectures often possible. One example of this is the class of 
Brauer graph algebras, which are, depending on their presentation, also known as symmetric special biserial algebras \cite{Roggenkamp, Schroll}.  
Brauer graph algebras are tame algebras and much of their representation theory is well-understood, see for example  \cite{A1, GR, Rickard, K} for a classification of derived equivalence classes,  \cite{ErdmannSkowronski1} for their Auslander-Reiten quiver, \cite{GreenSchrollSnashall} for their covering theory or \cite{AG, GreenSchrollSnashallTaillefer} for results on their Ext algebra. Recently there has been renewed interest in Brauer graph algebras, stemming from their connection with cluster theory on the  one side \cite{ L2, MarshSchroll, Schroll} and with mutation and derived equivalences on the other 
side \cite{A1, AAC,  Z1, Z2}.
 Furthermore, Brauer graph algebras naturally appear in the derived equivalence classification of self-injective algebras of finite and tame representation type, see \cite{SkowronskiSurvey} and the references within.  One reason that Brauer graph algebras are so well-studied and understood is that the combinatorial data of the underlying Brauer graph encodes much of the representation theory of a Brauer graph algebra such as for example, projective resolutions \cite{G, Roggenkamp} or information on the structure of the Auslander-Reiten quiver \cite{BS, Du}. 

 In the general setting of wild algebras, however, the picture is very different and much less is known or understood. In the cases where we do have some understanding, the wild algebras under consideration are often endowed with some additional structure such as for example in  \cite{Erdmann} where wild blocks of group algebras are considered, or in \cite{ErdmannKernerSkowronski}, or in \cite{KernerSkowronskiYamagata} where the algebras have other additional properties. 

In this paper we introduce a new class of mostly wild algebras, called \emph{Brauer configuration algebras}. These algebras have
additional structure arising from combinatorial data, called a \emph{Brauer configuration}. Brauer configuration algebras are a generalization of  Brauer graph algebras, in the sense that every Brauer graph
 is a Brauer configuration and every Brauer graph algebra  is a Brauer configuration algebra. However, unlike Brauer graph algebras, Brauer configuration algebras are in general of 
 wild representation type. 
But just as the Brauer graph encodes the representation theory of Brauer graph algebras, the expectation is that the Brauer configuration will encode the representation theory of Brauer configuration algebras.  As a first step in this direction, we show that  the Brauer configuration yields the Loewy structure of the indecomposable projective modules of a Brauer configuration algebra and that the dimension of the algebra can be directly read off the Brauer configuration (Proposition~\ref{prop-dim-result}). In fact,
surprisingly, one can show that the
radical of every finitely
generated module over a Brauer configuration algebra, more generally,
a special multiserial algebra, is the sum of
uniserial modules, the  intersection of any two being $(0)$ or a simple
module \cite{GreenSchroll}.   We note that multiserial and special multiserial algebras were first introduced in \cite{VHW}. In \cite{KY}  multiserial algebras have been studied with a focus on hereditary multiserial rings, and with a slightly differing definition multiserial algebras have been studied in \cite{BM, J, M}.
Furthermore, there are a number of other results valid for Brauer graph
algebras that generalize to Brauer configuration algebras, see \cite{GreenSchroll}, suggesting that  the
study of these rings and their representations could lead to interesting results.

Brauer configuration algebras contain another class of well-studied algebras that also have a combinatorial presentation in 
the form of a finite graph, 
namely that of symmetric algebras with radical cubed zero \cite{GreenSchroll}. Symmetric algebras with radical 
cube zero have been well-studied, see for example,  \cite{Benson, ErdmannSolberg1}. The representation type of symmetric algebras with radical cubed zero 
 has been classified in terms of the underlying graph in \cite{Benson} 
and for all but a finite number of families they are of wild representation type. In the case of infinite representation type, 
these algebras are Koszul  and thus we have some understanding of their representation theory  and their cohomology in 
particular \cite{ErdmannSolberg1}. 
In this paper we classify the Brauer configurations such that the associated Brauer configuration algebras are  what we call \emph{canonical} symmetric algebras with radical cubed zero. 

Brauer configuration algebras arise naturally in yet another context (in a different presentation). Namely, almost all
representatives of derived equivalence classes of symmetric algebras of finite representation type in the classification by Skowronski et al., see  
\cite{SkowronskiSurvey} and the references within, are in fact Brauer configuration algebras. Those that are not Brauer configuration algebras are in fact deformations of Brauer graph algebras. 

In Brauer configuraton algebras, similarly to the symmetric special biserial
 algebras,  a path (of length at least 2) is non-zero if and only if it lies in a \emph{special cycle}.  It is a consequence of the existence of the special cycles, that the projective indecomposable modules are  such that  their heart, given by the quotient of the radical modulo the socle, is a direct sum of uniserial
modules (Theorem~\ref{prop-proj}). 

 An interesting question to ask is whether there is a (partial) converse to Theorem~\ref{prop-proj}. Namely, in the case of special biserial algebras, we know from \cite{PS}  that a representation infinite algebra is special biserial if and only if it is a standard biserial self-injective algebra. One could raise the question whether a similar result holds for multiserial and  special multiserial algebras.  For example, given that a Brauer configuration algebra is a symmetric special multiserial algebra,  one could ask which symmetric multiserial algebras are Morita equivalent to Brauer configuration algebras. At this time these questions remain open and we thank the referee for pointing them out  
to us.

We will now sumarize the most important results in this paper.

A \emph{Brauer configuration} is a combinatorial data $\Gamma = \{ \Gamma_0, \Gamma_1, \mu, \mathfrak{o} \}$, where 
$\Gamma_0$ is the set of vertices of $\Gamma$, $\Gamma_1$ is a set of multisets of elements of $\Gamma_0$, $\mu$ is a function called the \emph{multiplicity} and $\mathfrak{o}$ is called the \emph{orientation} of $\Gamma$  (see section 1 for the precise definition of a Brauer configuration). Note that by a slight abuse of notation we call the elements in $\Gamma_1$ \emph{polygons}. We denote by $\Lambda_\Gamma$ the Brauer configuration algebra associated to $\Gamma$. 

First we have some structural results about Brauer configuration algebras.

{\bf Theorem A.}
\emph{
\begin{itemize}
\item[(1)] A Brauer configuration algebra is a finite dimensional symmetric algebra.
\item[(2)] Suppose $\Gamma = \Gamma_1 \cup \Gamma_2$ is a decomposition of $\Gamma$ into two disconnected Brauer configurations $\Gamma_1$ and $\Gamma_2$.
Then there is an algebra isomorphism $\Lambda_\Gamma \simeq \Lambda_{\Gamma_1}
\times \Lambda_{\Gamma_2}$ between the associated Brauer configuration algebras.
\item[(3)] The Brauer configuration algebra associated to a connected Brauer configuration is an indecomposable  algebra.
\item[(4)] A Brauer graph algebra is a Brauer configuration algebra.
\end{itemize}
}

 In \cite{VHW} multiserial  algebras have been defined. 
They are a generalisation of biserial algebras as defined in \cite{T, Fuller}.  A \emph{multiserial algebra} is defined to be a finite dimensional algebra $A$ such that,
for every indecomposable projective  left and right $A$-module $P$, $\rad(P)$ is a
sum of uniserial submodules, say $\rad(P)=\sum_{i=1}^nU_i$, for some $n$ and uniserial submodules $U_i$
of $\rad(P)$ with the property that, if $i\ne j$, $U_i\cap U_j$ is either $0$ or a simple $A$-module. The following result explores the structure of the indecomposable projective modules of a Brauer configuration algebra.

{\bf Theorem  B.} \emph{Let $\Lambda$ be a Brauer configuration algebra with Brauer configuration $\Gamma$.}
\emph{ \begin{enumerate}
\item There is a bijective correspondence between the set of  projective indecomposable $\Lambda$-modules and the polygons in $\Gamma$.
\item If $P$ is a projective indecomposable $\Lambda$-module corresponding to a polygon $V$ in $\Gamma$. Then   $rad(P)$ is a 
sum of $r$ indecomposable uniserial modules, where $r$ is the number of (nontruncated) vertices of $V$ and where the intersection of any two of the uniserial modules is a simple
$\Lambda$-module.
\end{enumerate}
}

 Since Brauer configuration algebras are symmetric, it then directly follows from the definition of a multiserial algebra that Brauer configuration
algebras are multiserial.

{\bf Corollary C.} \emph{A Brauer configuration algebra is a multiserial algebra.}

Finally we study  a bijective correspondence and its consequences between  finite graphs, symmetric matrices and (ordered) Brauer configurations. For this we will restrict ourselves to the case where the Brauer configuration has no self-folded polygons, or equivalently  to the case where there is no repetition of vertices in the polygons.   Before classifying  such Brauer configuration algebras with the additional condition that their Jacobson radical cubed is zero, we define  the notion of a {\it canonical } symmetric algebra with Jaobson radical cubed zero associated to a finite graph. We then show that Brauer configuration algebras with no repetition of vertices in the polygons and with radical cubed zero correspond exactly to the canonical symmetric algebras with radical cubed zero.

The paper is outlined as follows. In section~\ref{section-config} we define Brauer configurations and in section~\ref{sec-bca}
Brauer configuration algebras. Both sections contain examples to illustrate the newly defined concepts.  Section~\ref{sec-basic}
 starts out with the basic properties of Brauer configuration algebras, we then define special cycles and use these to show that the projective-injective modules of a Brauer configuration algebra are multiserial. In section~\ref{sec-r30} we define canonical symmetric algebras with radical cubed zero and relate them to
 Brauer configuration algebras with radical cubed zero and show that they correspond to exactly those Brauer configuration algebras whose Brauer configurations consist of polygons that have no self-foldings. 

\section{Brauer Configurations}\label{section-config}
In this section we define Brauer configurations which are
generalizations of Brauer graphs. 

 For the readers benefit we briefly provide
a definition of Brauer  graph algebras. Let $K$ be a field. A Brauer graph algebra is constructed from the combinatorial information contained in a
\emph{Brauer graph}, which is 
a 4-tuple $(\Gamma_0,\Gamma_1,\mu,\mathfrak o)$ where
$(\Gamma_0,\Gamma_1)$ is a finite (undirected) graph 
with vertex set $\Gamma_0$ and edge set $\Gamma_1$.  The
graph may contain
loops and multiple edges. Next, $\mu$ is a set map
from $\Gamma_0\to \mathbb N$  where $\mathbb N$	denotes
the positive integers. A vertex $\alpha\in\Gamma_0$ is called
\emph{truncated} if $\mu(\alpha)=1$ and $\alpha$ is the endpoint of
one and only one edge.  Last, $\mathfrak o$ is an orientation of $(\Gamma_0,
\Gamma_1)$; that is, at each vertex $\alpha$ in $\Gamma_0$, $\mathfrak o$ a
cyclic ordering of the edges having $v$ as one of its endpoints.  The formal definition of the orientation
and the construction a Brauer graph algebra from a Brauer
graph is a special case of the construction
of a Brauer configuration algebra from a Brauer configuration, which we give in detail below. 

 \subsection{Definition of Brauer configurations} We begin with a tuple
$\Gamma =(\Gamma_0, \Gamma_1)$, where $
\Gamma_0$ is a (finite) set of \emph{vertices} of $\Gamma$
and $\Gamma_1$ is a finite collection of labeled finite sets of
vertices where repetitions are allowed.   That is,
$\Gamma_1$ is a finite collection of finite labeled multisets whose
elements are in $\Gamma_0$.

  \begin{Example}{\rm We provide two examples which we will
continue to use throughout the paper to help clarify both definitions
and concepts.  \smallskip

 Example 1: $\Gamma=(\Gamma_0, \Gamma_1)$
with $\Gamma_0=\{1,2,3,4,5,6,7,8\}$ and $\Gamma_1=\{V_1=\{1,2,3,4,7\},
V_2=\{1,2,3,8\}, V_3=\{4,5\},V_4=\{4,6\}, V_5=\{1,4\}\}$.
\smallskip

Example 2: $\Delta=\{\Delta_0,\Delta_1\}$, with $\Delta_0=\{1,2,3,4\}$ and
$\Delta_1=\{V_1=\{1,1,1,2\}, V_2=\{1,1,3\}, V_3=\{1,2,3,4\}\}$.}
\end{Example}

We abuse notation and call the elements of $\Gamma_1$
\emph{polygons}.  The use of the term `polygon' will become
clear when we discuss realizations of configurations in section~\ref{SubsectionRealizations}.
We call the elements of a polygon $V$ the \emph{vertices of
$V$}. If $V$ is a polygon in $\Gamma_1$ and
$\alpha$ is a vertex in $\Gamma_0$, define
$\oc(\alpha,V)$ to be the number of times $\alpha$ occurs
as a vertex in $V$ and define the \emph{valence} of
$\alpha$, $\val(\alpha)$, to be
$\sum_{V\in\Gamma_1}\oc(\alpha,V)$.

\begin{Example}{\rm In  the first example above, we have $\oc(1,V_1)= 1, \oc(1,V_2)=1, \linebreak\oc(1,V_3)=0$, etc and $\val(1)=3$ and $\val(2)= 2$, etc. \\
In  the second example, we have $\oc (1,V_1)=3, \oc(1,V_2)=2,
 \oc(1,V_3)=1$, etc.  and  $\val(1)=  6$ and $\val(2)= 2$, etc.}\end{Example}

For  $\Gamma=(\Gamma_0,\Gamma_1)$ to be a
 Brauer configuration, we need two more pieces of
information.   One is a \emph{multiplicity function} $\mu\colon\Gamma_0\to \mathbb N$, where $\mathbb N$ denotes the  strictly positive integers.
The other is an \emph{orientation} $\mathfrak o$
for $\Gamma$.

\begin{Example}{\rm
In the first example, we choose $\mu(3)=3$, $\mu(5)=2$, and $\mu(i)=1$ for all other vertices.
In the second example, we   choose $\mu(i)=1$ for all vertices $i$.
Of course, there are many other choices for the multiplicity function than
the arbitrary ones given here.}\end{Example}

An \emph{orientation for $\Gamma$} is a choice,  for each vertex  $\alpha\in\Gamma_0$, of a cyclic ordering
of the polygons in which $\alpha$ occurs as a vertex,
including repetitions.
More precisely, for each vertex $\alpha\in \Gamma_0$, let $V_1,\dots, V_t$ be
the list of polygons in which $\alpha$ occurs as a vertex, with a polygon $V$
occuring  $\oc(\alpha,V)$ times in the list, that is $V$ occurs the number of times $\alpha$ occurs as a vertex
in $V$.
The cyclic order at vertex $\alpha$ is obtained by linearly ordering the list, say $V_{i_1}<\dots< V_{i_t}$ and  by adding $V_{i_t}< V_{i_1}$.
%
 Finally,
note that if $V_1<\cdots<V_t$ is the chosen cyclic ordering at
vertex $\alpha$,  then the same ordering can be represented  by any cyclic permutation such as
 $V_2<V_3<\cdots<V_t<V_1$ or  $V_3<V_4<\cdots<V_t<V_1<V_2$, etc.

\begin{Example}\label{ex-order}{\rm
In the first example, the list of polygons ocuring at vertex 1 is $V_1, V_5,V_2$, and $V_1,V_2$
at vertex 2,  $V_1, V_2$ at vertex 3, $V_1, V_3, V_4,
V_5$ at vertex 4, $V_3$ at vertex  5, $V_4$ at vertex 6, $V_1$ at vertex 7, and $V_2$ at  vertex
8. \newline
In the second example, the list of polygons ocuring at vertex 1 is \sloppy $V_1^{(1)},V_1^{(2)},
V_1^{(3)}, V_2^{(1)}$, $ V_2^{(2)}, V_3$, \sloppy where $V_1^{(1)},V_1^{(2)},
V_1^{(3)}$ are the three ocurrences of vertex $1$ in $V_1$, etc.

 An orientation is then  given by cyclically ordering the lists of polygons at each vertex.
Thus, for the first example,  one orientation would be:
$V_1<V_5<V_2$ at vertex 1, $V_1<V_2$ at vertex 2, $V_1<V_2$ at vertex 3, $V_1< V_4<V_3<
V_5$ at vertex 4,  $V_3$ at vertex 5,  $V_4$ at vertex 6, $V_1$ at vertex 7, and
$V_2$ at vertex 8.   Note that for vertices 2,4,5,6,7,8 there is only one choice for
cyclic ordering.   On the other hand,
there are 2 choices for the cyclic ordering at vertex 1 and
3! choices at vertex 4.  For later use, we call the orientation given above
$\mathfrak o_1(\Gamma)$ and let $\mathfrak o_2(\Gamma)$ be the orientation with orderings
$V_1<V_2<V_5$ at vertex 1 and $V_1<V_5<V_3<V_4$ at vertex 4.  For the remainder of
the paper, unless otherwise stated, we will use the orientation $\mathfrak o_1(\Gamma)$ when referring to example 1.

In the second example, let $\mathfrak o(\Delta)$ be the orientation given by the
orderings $V_1^{(1)}<V_1^{(2)}<
V_1^{(3)}< V_2^{(1)}< V_2^{(2)}< V_3$ at vertex 1,
 $V_1<V_3$ at vertex 2, and
$V_2<V_3$ at vertex 3. There are many other choices 
of orientations for this example and they are typically associated to non-isomorphic Brauer configuration
algebras.  }\end{Example}


\begin{definition}\label{BC} {\rm A \emph{Brauer configuration} is
a tuple $\Gamma =(\Gamma_0,\Gamma_1, \mu,\mathfrak o)$,
where $\Gamma_0$ is  a set of vertices, $\Gamma_1$ is a
set of polygons,  $\mu$ is a multiplicity function, and $\mathfrak o$
is an orientation, such that the following conditions hold.
\begin{enumerate}
\item[C1.]  Every vertex in $\Gamma_0$ is a vertex in at least
one polygon in $\Gamma_1$.
\item[C2.]  Every polygon in $\Gamma_1$ has at least two vertices.
\item[C3.]  Every polygon in $\Gamma_1$ has at least one vertex
$\alpha$ such that  $\val(\alpha)\mu(\alpha)>1$.
\end{enumerate}}
\end{definition}

Note that if $V$ and $V'$ are two distinct
polygons in $\Gamma_1$, it is possible that $V$ and $V'$ have  identical sets of vertices.
We distinguish between the two using their labels
$V$ and $V'$.   Also note that $(\{1,2\}, \{V=\{1,2\}\},\mu,\frak o)$ with $\mu(1)=\mu(2)=1$ violates C3 and hence is not a \bc. However, by convention this algebra usually is nevertheless consider to be a Brauer graph algebra and is isomorphic to the truncated polynomial algebra $k[x]/(x^2)$.

Our next goal is to show that a Brauer configuration is the union of connected Brauer configurations.
More precisely, let
$\Gamma=(\Gamma_0, \Gamma_1,\mu, \mathfrak o)$ be a Brauer configuration.
We say that $\Gamma$ is \emph{ disconnected} if there  are two Brauer configurations
$\Gamma'=(\Gamma'_0, \Gamma'_1,\mu',\mathfrak o')$ and
 $\Gamma''=(\Gamma''_0, \Gamma''_1,\mu'',\mathfrak o'')$ such that
\begin{enumerate}
\item
 $\{\Gamma'_0, \Gamma''_0\}$ is a partition of $\Gamma_0$,
\item
  for every
 polygon $V\in\Gamma_1$,
the vertices of $V$ are either all in $\Gamma_0'$ or are all in $\Gamma_0''$, 
\item   $\{\Gamma'_1,\Gamma''_1\}$ constitutes a partition of $\Gamma_1$,
\item
$\mu'$ (resp. $\mu''$) is a restriction of $\mu$ to $\Gamma'_0$ (resp. $\Gamma''_0$),   and
\item
the orientations $\mathfrak{o'}$ and $\mathfrak{o''}$ are
induced by $\mathfrak{o}$.                         
\end{enumerate}
In this case,
we write $\Gamma=\Gamma'\cup\Gamma''$.
We say \emph{$\Gamma$ is connected} if it is not disconnected.  It is clear
that any Brauer configuration can be uniquely written as a union of
connected Brauer configurations.  We call these connected Brauer configurations
the \emph{connected components of $\Gamma$}.

We say a polygon in a \bc\ is \emph{self-folded} if there is at least one vertex
which occurs more than once in $V$.   See the section below on realizations for a justification
of the terminology.  

 \begin{remark}\label{BGA} A connected Brauer configuration, all of whose polygons are
2-gons is
a Brauer graph  and a self-folded 2-gon is a loop.  For the definition
of  Brauer graphs and Brauer graph algebras, see, for example, \cite{Benson}, for a definition in terms of ribbon graphs, see \cite{MarshSchroll} or for a presentation more closely related to the present paper, see \cite{GreenSchrollSnashall}.
\end{remark}

\subsection{Realizations of Brauer configurations}\label{SubsectionRealizations}
It is very useful to visualize a Brauer configuration $\Gamma=(\Gamma_0,
\Gamma_1,\mu,\mathfrak o)$.
For this, one represents each polygon in $\Gamma_1$ by
an actual polygon.
 That is given  a polygon (or multiset) $V=\{\alpha_1,\dots,\alpha_d\}$ in $\Gamma_1$,
$V$ is visualized by an actual $d$-gon 
 with vertices
labeled by the $\alpha_i$.  Although there usually are many ways to
perform the vertex labeling, just one is chosen. In particular, the order in
which one labels the vertices of the actual  polygon is not
important.  If a vertex $\alpha\in\Gamma_0$ occurs
more than once in $V$, that is, $V$ is self-folded, we identify all  vertices labeled $\alpha$ in the actual  polygon $V$.
 Finally, we
identify vertices of different polygons if they correspond to the same
vertex in $\Gamma_0$.
We call such a choice, a \emph{realization of the configuration $\Gamma$}.
  The actual
theory and proofs in this paper never refer to or use any realization, however in terms of visualizing and understanding the results and proofs,
realizations of configurations are
a useful tool.

	\begin{Example}{\rm We provide two realizations of the first example given above.
 Here is the first realization.

\hskip 1in \begin{center} \includegraphics[scale=.5]{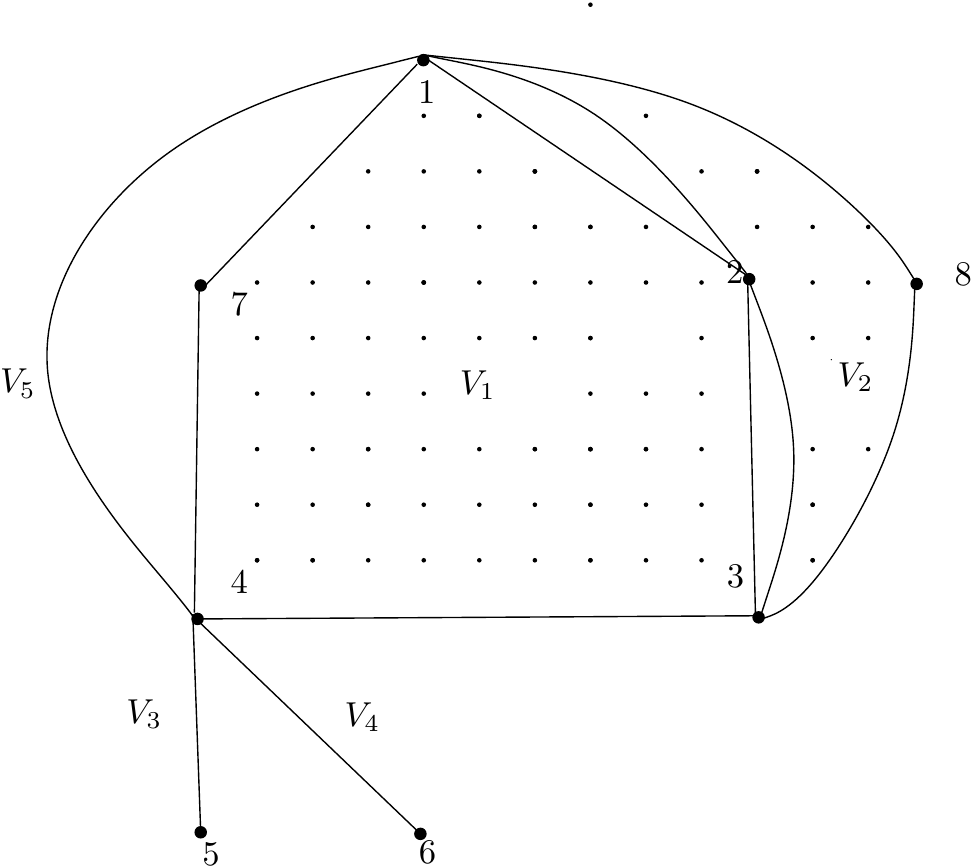} \end{center}

Note that the vertices in the polygon $V_1$ are `ordered' 1,2,3,4,7 and in the 5-gon representing
$V_1$, 1 is adjacent to 2, 2 is adjacent to 3, etc.
If we change the order in which vertices occur, we will,  in general, change the realization.   For example, in the
first example,  if instead, we ordered the vertices 1,3,4,2,7 in the realization,
we would obtain the following
\bc .

\hskip 1in \begin{center} \includegraphics[scale=.5]{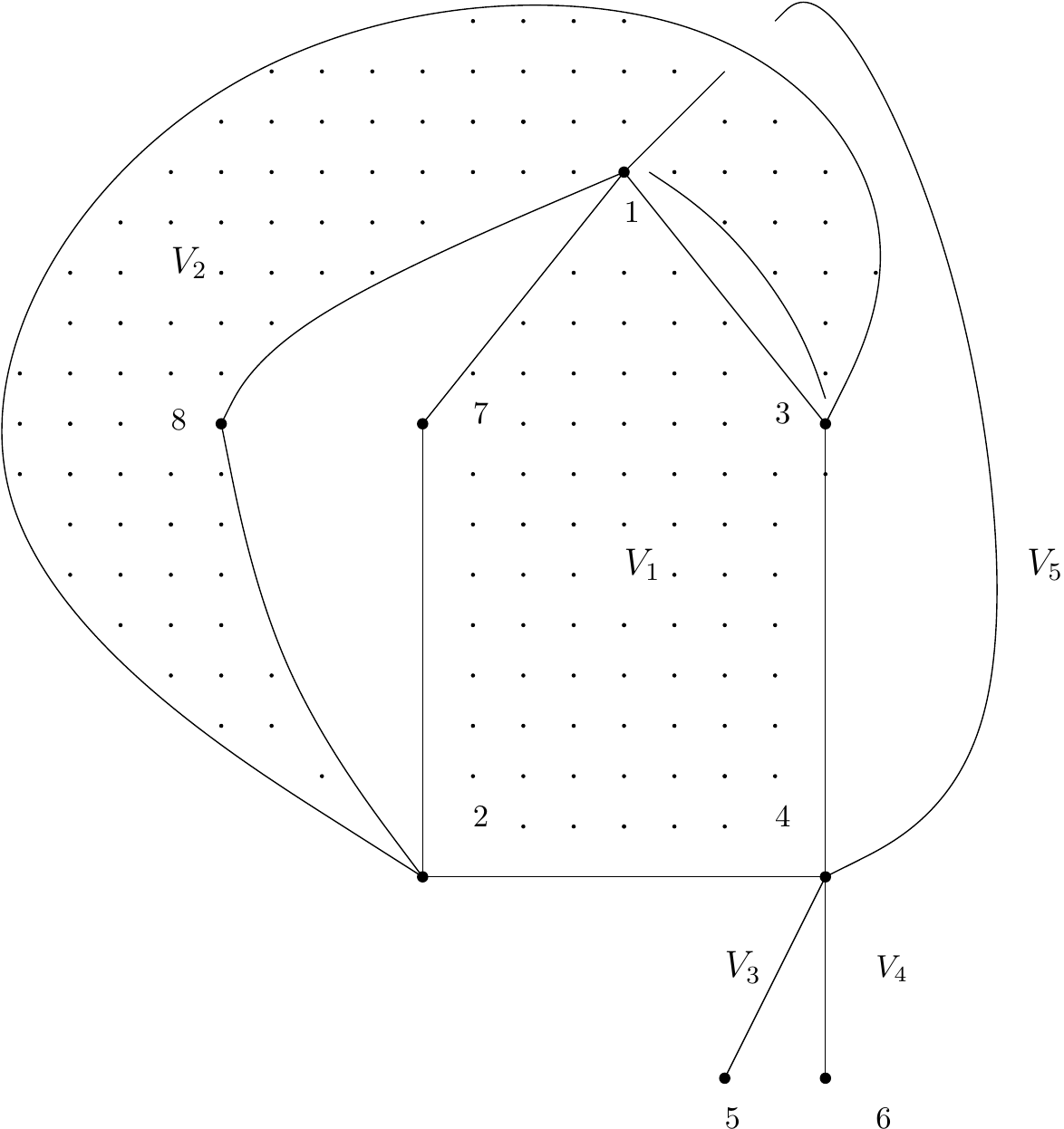} \end{center}

We note that these are two different realizations of the same \bc\ and that the first
realization is embeddable in the plane but the second is not.

Below is a realization of $\Delta$, the second example, in which
there are a number of self-foldings.

 \hskip 1in \begin{center} \includegraphics[scale=.5]{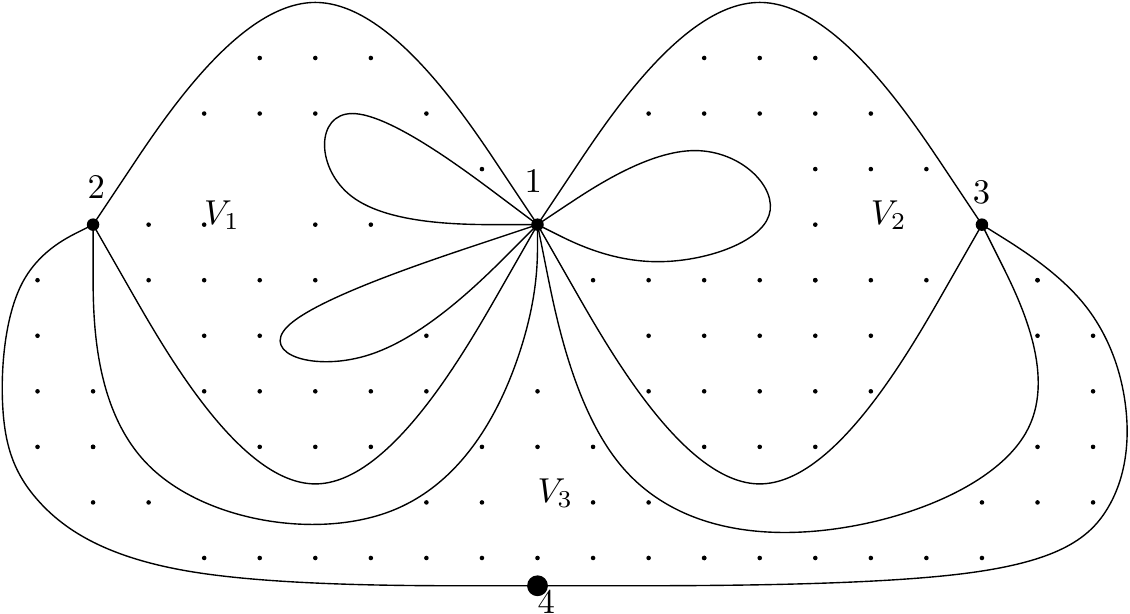}\end{center}

 }\end{Example}



\subsection{Truncated vertices and reduced Brauer configurations}\label{subsec-trunc and red}

We now define the crucial concept of a truncated vertex in
a Brauer configuration $\Gamma=(\Gamma_0,\Gamma_1,\mu, \mathfrak o)$.
 We say a vertex $\alpha\in\Gamma_0$ is \emph{truncated} if $\val(\alpha)\mu(\alpha)= 1$;
that is, $\alpha$ occurs exactly once in exactly one $V\in \Gamma_1$ and $\mu(\alpha)=1$.
A vertex
 that is not truncated is called a \emph{nontruncated vertex}.
\begin{Example}{\rm In the first example, vertices 6,7, and  8 are truncated.  Note that vertex 5 is
not truncated, even though $\val(5)=1$, since $\mu(5)=2>1$.  In the second example,
vertex 4 is truncated.}\end{Example}

We will see in what follows that a truncated vertex only plays a role if
it is one of the two vertices of a 2-gon.
Namely, we introduce a reduction procedure which removes truncated vertices from polygons with 3 or more vertices. This will not affect
the associated Brauer configuration algebra defined in Section \ref{sec-bca}, see Proposition \ref{prop-red}.

The reduction procedure for removing a truncated vertex occurring in a $d$-gon, $d\ge 3$ is defined as follows.
 Suppose that
 $\Gamma=(\Gamma_0, \Gamma_1,\mu,\mathfrak o)$ is a Brauer configuration and  that $\alpha\in \Gamma_0$ is a truncated vertex in a
 $d$-gon $V\in \Gamma_1$ with $d\ge 3$.  Note that since $\alpha$ is truncated,
$\val(\alpha)=1$ and this implies that $V$ is the unique polygon in $\Gamma_1$ having
$\alpha$ as a vertex.
 After reordering the
vertices in $V$, we may assume that $V$ is the $d$-gon
$(\alpha_1,\dots,\alpha_{d})$, with $\alpha=\alpha_{d}$.
Let $\Gamma'=(\Gamma'_0, \Gamma'_1,\mu', \mathfrak o')$, where
\begin{enumerate}
\item
$\Gamma'_0=\Gamma_0\setminus \{\alpha\}$,
\item $V'$ be the $(d-1)$-gon $\{\alpha_1,\dots,\alpha_{d-1}\}$,
\item  $\Gamma'_1= (\Gamma_1\setminus \{V\}) \cup \{V'\}$,
\item  $\mu'=\mu_{\mid \Gamma'_0}$.
\item $\mathfrak o'$ is the orientation induced from the orientation $\mathfrak o$.
\end{enumerate}
  We see that $\Gamma'$ is simply obtained
from $\Gamma$ by ``removing'' the truncated vertex $\alpha$ from $V$.  Note that
the number of polygons in $\Gamma$ and $\Gamma'$ are the same and
only one polygon in $\Gamma'$ has one less vertex.

If $\Gamma'$ also  has a truncated vertex in a $d'$-gon, with $d'\ge 3$, we can remove
it and obtain a Brauer configuration $\Gamma''$, where $\Gamma''$ has
2 less vertices than $\Gamma$.   Continuing in this fashion, we arrive at a
Brauer configuration $\Gamma^*$ in which, if $\alpha$ is a truncated vertex, it occurs in
a $2$-gon and hence there are no more reductions that can be performed in $\Gamma^*$.
We call $\Gamma^*$ a \emph{reduced Brauer configuration associated to $\Gamma$}.  If $\Gamma=\Gamma^*$ we
say that $\Gamma$ is a \emph{reduced} Brauer configuration.
We leave the proof of the next result to the reader.

\begin{lemma}\label{reduced}
Let $\Gamma$ be a Brauer configuration and  suppose that
$\Gamma^*$ and $\Gamma^{**}$  are two reduced configurations
associated to $\Gamma$.  Then we may choose a relabeling of the vertices
of\  $\Gamma^{**}$ so that $\Gamma^*=\Gamma^{**}$.
\qed
\end{lemma}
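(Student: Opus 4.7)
My plan is to show that the reduction procedure is local, so that its effect can be analyzed one polygon at a time, and that the only ambiguity is the choice of which truncated vertex to retain in a $2$-gon that arises from a polygon almost all of whose vertices were truncated. The key observation is that removing a truncated vertex $\alpha$ from a polygon $V$ of size $\geq 3$ only alters $V$ (which becomes $V\setminus\{\alpha\}$) and removes $\alpha$ from $\Gamma_0$; every other polygon is untouched, and $\val(\beta)$ and $\mu(\beta)$ are unchanged for every $\beta\neq\alpha$. In particular, the truncated vertices of the one-step reduction are exactly (truncated vertices of $\Gamma)\setminus\{\alpha\}$, so different polygons can be reduced independently.

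Fix a polygon $V\in\Gamma_1$ of size $d$, and let $t$ be the number of vertices of $V$ that are truncated in $\Gamma$. By C3, $V$ contains at least one nontruncated vertex, so $t\leq d-1$. The procedure removes truncated vertices from $V$ so long as $V$ has size $\geq 3$. If $t\leq d-2$, then all $t$ truncated vertices are removed and the reduced version of $V$ is the uniquely determined polygon consisting of the nontruncated vertices of $V$ in their inherited cyclic order; no choice is made. If $t=d-1$, then the reduction stops when $V$ has size $2$, leaving the unique nontruncated vertex $\beta\in V$ together with one retained truncated vertex $\gamma$; here the choice of $\gamma$ among the $d-1$ truncated vertices of $V$ is the only source of non-uniqueness.

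Any two truncated vertices $\gamma,\gamma'\in V$ are structurally interchangeable: each has $\val=1$, $\mu=1$, occurs only in $V$, and contributes a trivial cyclic ordering at its own vertex. Hence swapping their labels is an automorphism of the local combinatorial data. Given two reduced configurations $\Gamma^*$ and $\Gamma^{**}$ obtained from $\Gamma$, they agree on every polygon of the first type, and on polygons of the second type they differ only in which truncated vertex was kept. Since each truncated vertex of $\Gamma$ belongs to a unique polygon, the required swaps for distinct polygons involve disjoint sets of vertex labels and therefore assemble into a single relabeling of the vertices of $\Gamma^{**}$. Under this relabeling, the underlying vertex sets, polygon sets (polygons keep their labels throughout the reduction), and multiplicity functions of $\Gamma^*$ and $\Gamma^{**}$ coincide.

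The only subtle point to check is that the orientations agree after the relabeling. At each nontruncated vertex $\beta$, the cyclic order in $\Gamma^*$ and in $\Gamma^{**}$ is induced from $\mathfrak o$ by simply forgetting the polygons that are not in the reduced configuration (there are none, since reduction does not delete polygons) and recording $V'$ in place of $V$; this gives the same cyclic order in both reduced configurations. At each retained truncated vertex the cyclic order is trivially a single element. Hence once the local observations above are in hand, the matching of orientations is automatic, and the lemma follows.
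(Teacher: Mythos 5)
The paper leaves this lemma to the reader, and your argument is a correct and complete version of the intended proof: you rightly reduce to the observation that the one-step reduction is local and order-independent, and that the only ambiguity is which truncated vertex survives in a polygon with exactly one nontruncated occurrence, which is repaired by relabeling the pairwise-disjoint retained truncated vertices. The only quibble is the phrase ``in their inherited cyclic order'': polygons are multisets with no internal ordering of their vertices (the orientation orders polygons at a vertex, not vertices in a polygon), but this does not affect the argument.
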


The above lemma allows us to talk about `the' reduced configuration associated
to a Brauer configuration.

\begin{Example}{\rm In the first example, the reduced configuration is obtained by removing
vertices 7 and 8.  Note that although vertex 6 is truncated, it is in the 2-gon
$V_4$.  In the second example,  the reduced configuration is obtained
by removing vertex 4.

Realizations of the reduced \bc s for these two examples are given below.
\begin{center}
\includegraphics[scale=.5]{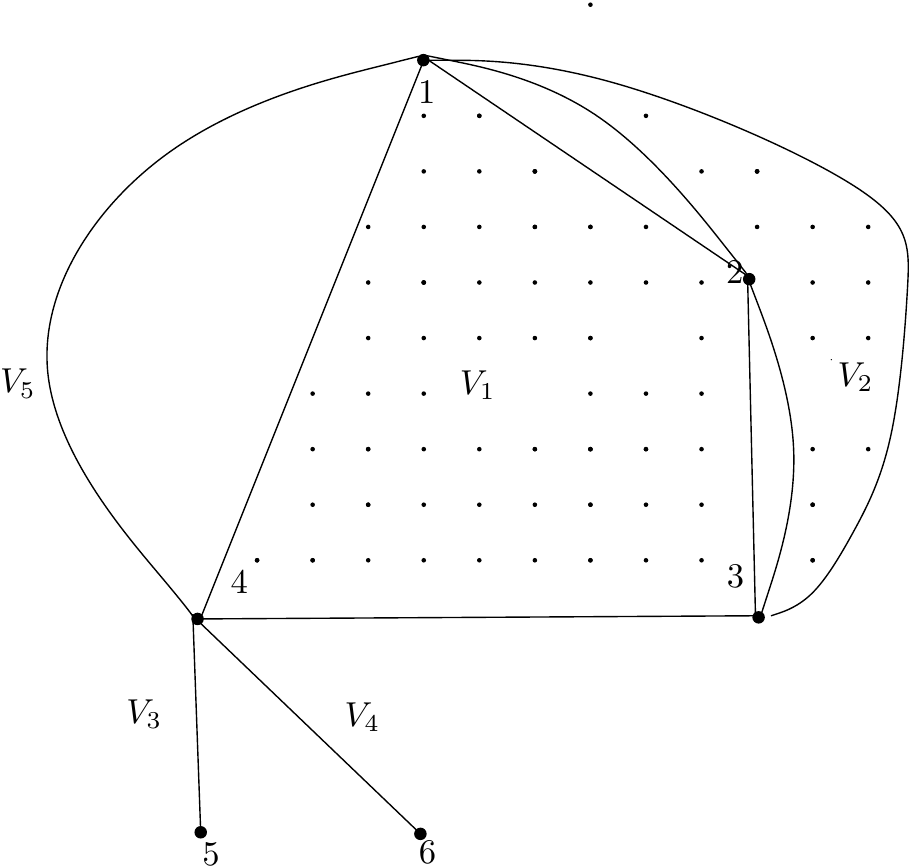}\quad\quad \quad\quad
 \includegraphics[scale=.6]{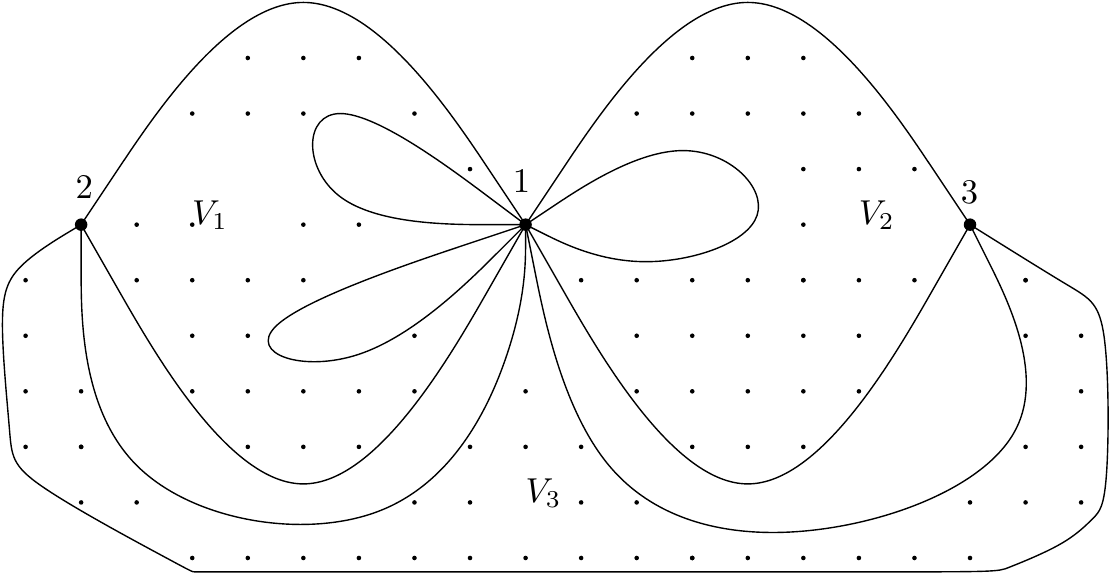}
\end{center}
}

\end{Example}

Given a \bc\  $\Gamma=(\Gamma_0, \Gamma_1,\mu, \mathfrak o)$, we
note that $\Gamma$ is
reduced if and only if every polygon $V$ in $\Gamma_1$ satisfies
one of
the following conditions:
\begin{enumerate}
\item $V$ contains no truncated vertices.
\item $V$ is a 2-gon with one truncated vertex.
\end{enumerate}




\section{Brauer configuration algebras}\label{sec-bca}

 In this section we  define Brauer configuration algebras. As described in the introduction, Brauer configuration algebras are generalizations of Brauer graph algebras.

Let $\Gamma=(\Gamma_0, \Gamma_1,\mu, \mathfrak o)$ be a Brauer configuration.
For each nontruncated vertex $\alpha\in\Gamma_0$, consider the list
of polygons  $V$ containing $\alpha$ such that $V$ occurs in this list $\oc(\alpha,V)$ times. As
described in Section \ref{section-config}, the orientation $\mathfrak o$ provides a cyclic ordering
of this list.
We call such a cyclically ordered list the \emph{successor sequence
at $\alpha$}.
 Suppose  that $V_1<\dots<V_t$ 
is the
successor sequence at some nontruncated vertex $\alpha$ (with $\val(\alpha)= t$).   Then we say that
$V_{i+1}$ is the \emph{successor of $V_i$ at $\alpha$}, for $1\le i\le t$, where
$V_{t+1}=V_1$.  Note that if $\val(\alpha)=1$, $\mu(\alpha)>1$ and $\alpha$ is vertex in
polygon $V$, then the successor sequence at $\alpha$ is
just $V$.

\begin{Example}{\rm The successor sequences for our two examples are already
given by the orientations in Example \ref{ex-order}.  For instance, for the first example (with orientation $\mathfrak{o}_1(\Gamma)$), the successor sequence
of vertex 4 is $V_1< V_4<V_3<V_5$ (or  $V_4<V_3<V_5<V_1$ etc.)  For the
second example, the successor sequence for vertex 1 is
$V_1^{(1)}< V_1^{(2)}<V_1^{(3)}<V_2^{(1)}<V_2^{(2)}<V_3.$}\end{Example}

 A Brauer configuration algebra $\Lambda_{\Gamma}$
associated to a Brauer configuration $\Gamma$ is defined by giving $\Lambda_{\Gamma}$
as a path algebra of a quiver modulo an ideal of relations. Fix a field $K$ and
let  $\Gamma=(\Gamma_0, \Gamma_1,\frak o,\mu)$ be a Brauer
configuration, with $\Gamma_1=\{V_1,\dots,V_m\}$.

\subsection{The quiver of a Brauer configuration algebra}

We define the quiver $\cQ_{\Gamma}$ as follows.   The vertex set $\{v_1,\dots, v_m\}$
of $\cQ_{\Gamma}$ is in correspondence with the set of polygons $\{V_1,\dots, V_m\}$ in $\Gamma_1$, noting that there is one vertex in $\cQ_\Gamma$
for every polygon in $\Gamma_1$.
We call $v_i$ (respectively, $V_i$) the vertex (resp. polygon) \emph{associated}
to $V_i$ (resp. $v_i$).  In order to define the arrows in $\cQ_{\Gamma}$, we
use the successor sequences.
For each nontruncated vertex $\alpha\in\Gamma_0$, and each successor $V'$ of $V$ at
$\alpha$, there is an arrow from
$v$ to $v'$ in $\cQ_{\Gamma}$,  where $v$ and $v'$ are the vertices in
$\cQ_{\Gamma}$ associated to the polygons $V$ and $V'$ in $\Gamma_1$, respectively.

Note that  $V'$ can be the successor of $V$ more than once at a given vertex of $\Gamma_0$, and also that
$V'$ can be the successor of $V$ at more than one vertex of $\Gamma_0$.  For each such
occurrence there is an arrow from $v$ to $v'$.  In particular, $\cQ_\Gamma$ may have  multiple
arrows from $v$ to $v'$.

Thus, every arrow in $\cQ_{\Gamma}$
is associated to a non-truncated vertex $\alpha\in\Gamma_0$ and two   polygons $V$ and $V'$ in $\Gamma_1$ such that $V'$ is the
successor of $V$ at $\alpha$.   Conversely, associated to two polygons
$V$, $V'$,  such that $V'$ is the
successor of $V$  at some vertex $\alpha\in\Gamma_0$,
there is an arrow from $v$ to $v'$ in $\cQ_{\Gamma}$.

\begin{Example}{\rm For the first example, recall that we took $V _1<V_5<V_2$ as
as the ordered list of polygons at vertex 1 for the orientation $\mathfrak o_1(\Gamma)$.
It is the successor sequence at vertex 1 of $\Gamma$ that
yields the arrows $a_1,a_2,$ and $a_3$ in the quiver
below.  For example, $V_5$ is the
successor of $V_1$ at vertex 1 yielding the arrow $a_1$.
The successor sequence $V_1<V_2$ at vertex 2  yields
the arrows $b_1$ and $b_2$.
 The successor sequence at vertex 3 yields
 $c_1$ and $c_2$, that at vertex
4  yields  $d_1,\dots, d_4$, and that at vertex 5  yields $e$.  }
\end{Example}

 The quiver associated to $(\Gamma_0,\Gamma_1,\mu,\mathfrak o_1(\Gamma))$ is

\hskip 1in  \begin{center} \includegraphics[scale=.5]{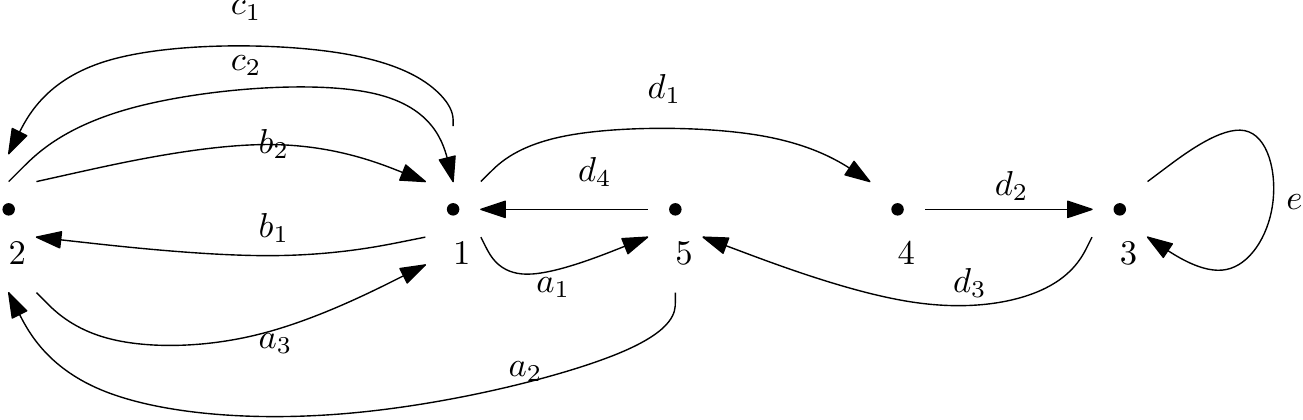} \end{center}

 The quiver associated to $(\Gamma_0,\Gamma_1,\mu,\mathfrak o_2(\Gamma))$ is

\hskip 1in \begin{center} \includegraphics[scale=.7]{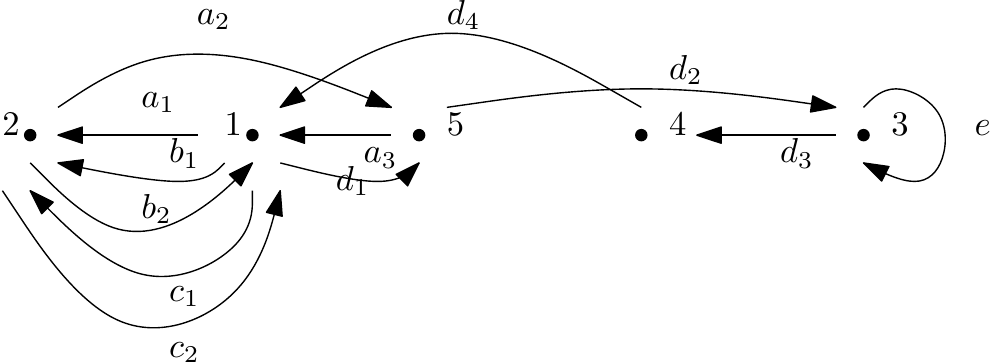} \end{center}

 We note that the two orientations $\mathfrak o_1(\Gamma)$ and $\mathfrak o_2(\Gamma)$ give rise to two non-isomorphic quivers; that is, they are non-isomorphic as oriented graphs. More precisely,  the number of arrows going into a vertex, the number of arrows going out of a vertex and the fact that vertex 3 has a loop show that if there were
an isomorphism of quivers, vertex 3 would be sent to vertex 3 and vertex 5 would be sent
to vertex 5.  But the arrow between vertices 3 and 5 are in opposite directions in
the two quivers. Hence the quivers are not isomorphic.

The quiver associated to $\Delta =
(\Delta_0,\Delta_1,\mu,\mathfrak o(\Delta))$ is

 \begin{center} \includegraphics[scale=.7]{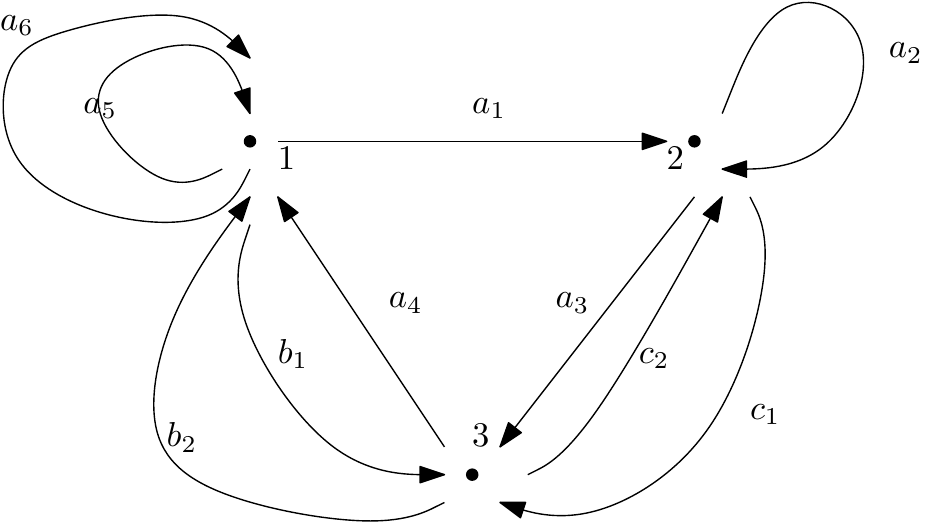}  \end{center}

\begin{remark}\rm When we apply the above construction to Brauer configuration algebras where all polygons are 2-gons,  we recover the usual quiver of a Brauer graph
algebra.
\end{remark}

\subsection{Ideal of relations and definition of a Brauer configuration algebra}

We define a set of elements $\rho_{\Gamma}$ in $K\cQ_{\Gamma}$ which will generate
the ideal of relations  $I_\Gamma$ of the Brauer configuration algebra associated to the Brauer configuration
$\Gamma$.  There are three types of relations in  $\rho_{\Gamma}$.  For this we need
the following definitions.

For each nontruncated vertex $\alpha\in\Gamma_0$ with successor sequence
$ V_1< V_2 <\ldots < V_{\val(\alpha)}$,
 let  $C_{j} = a_{j} a_{j+1}
\cdots a_{\val(\alpha)} a_1 \cdots a_{j-1}$ be the cycle in $\Qg$, for $j = 1, \dots, \val(\alpha)$,  where the arrow $a_r$ corresponds to
the polygon $V_{r+1}$ being the successor of the polygon $V_r$ at the
vertex $\alpha$.
 Now fix a polygon $V$ in $\Gamma_1$ and suppose that $\oc(\alpha, V) = t \ge 1$. Then there are $t$ indices $i_1, \dots, i_t$ such that $V = V_{i_j}$.
We define
the  \emph{special $\alpha$-cycles at $v$} to be  the cycles $C_{i_1}, \dots, C_{i_t}$,
where $v$ is the vertex in the quiver of $\cQ_{\Gamma}$ associated to  the polygon $V$.  Note that
each $C_{i_j}$ is a cycle in $\cQ_{\Gamma}$, beginning and ending
at the vertex $v$ and if  $\alpha$ occurs only  once
in $V$  and $\mu(\alpha) = 1$, then there is only one special $\alpha$-cycle at $v$. Furthermore, if  $V$ is a polygon
containing  $n$ vertices, counting repetitions, then there are a total of $n$
different special $\alpha$-cycles at $v$,   one for each $\alpha\in V$ where each repetition of $\alpha$ in $V$ gives rise to a different special $\alpha$-cycle at $v$. Note however that the special cycles, corresponding to repetitions of the same vertex $\alpha$ in $V$, are cyclic permutations of each other. 

We will sometimes say $C$ is a special $\alpha$-cycle if $v$ is understood
or just a special cycle if both $v$ and $\alpha$ are understood. 

\begin{Example}{\rm
In the first example with  orientation $\mathfrak o_1(\Gamma)$,  the special $1$-cycle
at $v_1$
is $a_1a_2a_3$, the special $2$-cycle at $v_1$ is $b_1b_2$, etc.  The 
special $1$-cycle at $v_2$ is the cyclic permutation
$a_3a_1a_2$ of the special $1$-cycle at $v_1$.  Similarly, the special $2$-cycle at $v_2$ is $b_2b_1$.  There are no
special $j$-cycles for $j=6,7,8$ since they are truncated vertices.  Note that  $e^2$ is the
unique $5$-cycle at $v_3$ since $\mu(5)=2$.

In the second example,  there are three special $1$-cycles at $v_1$,
 $a_1a_2\cdots a_6$, $a_6a_1a_2\cdots a_5$,
and  $a_5a_6a_1a_2\cdots a_4$.  There are two special $1$-cycles at $v_2$  $a_2a_3\cdots a_6a_1$ and
$a_3a_4\cdots a_6a_1a_2$.  There is only one special $1$-cycle at $v_3$.  Since vertex 4 of $\Gamma$
is truncated, there are no special $4$-cycles.  Since $2$ is not a vertex in $V_2$, there are no special
$2$-cycles at $v_2$ and similarly, since 3 is not in $V_1$ there are no special $3$-cycles at $v_1$.}
\end{Example}

 We now define the three types of relations by setting:

\textit{Relations of type one.}  For each polygon $V =\{\alpha_1, \ldots, \alpha_m\} \in \Gamma_1$ and each pair
of  nontruncated vertices $\alpha_i$ and $\alpha_j$  in $V$,  $\rho_\Gamma$ contains all relations of the form
 $C^{\mu(\alpha_i)} - (C')^{\mu(\alpha_j)}$ 
  where $C$ is a 
 special $\alpha_i$-cycle at $v$ and $C'$ is  a  special $\alpha_j$-cycle at $v$.

\textit{Relations of type two.} The type two relations are all paths of
the form $C^{\mu(\alpha)} a$ where
$C$ is a special  $\alpha$-cycle  and  $a$ is the first arrow in $C$.

\textit{Relations of type three.} These relations are
quadratic monomial relations of the form $ab$ in $K\cQ_{\Gamma}$ where $ab$
is not a subpath of any special cycle  unless $a = b$ and $a$ is a loop associated to a vertex $\alpha$ of valency 1 and $\mu(\alpha) >1$.

 \begin{definition}{\rm  Let $K$ be a field and
 $\Gamma$ a Brauer configuration.  The \emph{Brauer configuration
algebra $\Lambda_{\Gamma}$ associated to $\Gamma$}  is defined to be
 $K\cQ_{\Gamma}/I_\Gamma$, where  $\cQ_{\Gamma}$ is
the quiver associated to $\Gamma$ and $I_\Gamma$ is the ideal in  $K\cQ_{\Gamma}$ generated by the set of relations $\rho_{\Gamma}$
of type one, two and three.}
\end{definition}

 We note that the set of relations $\rho_\Gamma$ generating $I_\Gamma$ is not necessarily minimal and usually contains redundant relations.

\begin{Example}{\rm
For the first example, we list some of the relations of type one:
$a_1a_2a_3-b_1b_2, a_1a_2a_3 -(c_1c_2)^3, a_1a_2a_3-d_1d_2d_3d_4,
 a_3a_1a_2-b_2b_1,  a_3a_1a_2 -(c_2c_1)^3,  d_3d_4d_1d_2-e^2$, etc.   Many of the  type one relations are redundant, for example, $b_1b_2-(c_1c_2)^3$
and
$(c_1c_2)^3 -d_1d_2d_3d_4$  follow from the above.  Some of the type two relations are
\sloppy
$ a_1a_2a_3a_1, a_2a_3a_1a_2,  a_3a_1a_2a_3, d_4d_1d_2d_3d_4$, $d_3d_4d_1d_2 d_3$,
$e^3$,  $(c_1c_2)^3c_1$, $(c_2c_1)^3c_2$, etc.  The type three relations are any of the paths of length two of the form
 $a_ib_j$, $b_ja_i$, $a_ic_j$, $c_ja_i$, $a_id_j, d_ja_i,   ea_i,$ and $ a_ie$ for all possible combinations of $i,j$.
This gives a partial list of the type one, two, and three relations, and it includes many relations that are consequences of others.

For the second example, some of the type one  relations are   $a_1a_2\cdots a_6- a_6a_1a_2\cdots a_5, \\
a_1a_2\cdots a_6- a_5 a_6a_1a_2\cdots a_4,   a_2a_3\cdots a_6a_1- a_3a_4\cdots a_6a_1a_2$,
and $b_2b_1-c_2c_1$.
Some type two relations are $a_6a_1\cdots a_6,  a_1\cdots a_6a_1$, or
$b_2b_1b_2$. Any $a_ib_j, b_ja_i, a_ic_j, c_ja_i,
b_ic_j$ are type three relations.  Some  other relations of type three are
$a_6a_5, a_5^2, a_2^2$ and $ a_5a_1$.   } \end{Example}

 Next we show that the reduction procedure for removing truncated
vertices from a $d$-gon, $d\ge 3$, does not change the Brauer configuration.

\begin{prop}\label{prop-red} Let $\Gamma$ be a Brauer configuration with associated
Brauer configuration algebra $\Lambda_\Gamma$.   Suppose $\alpha\in \Gamma_0$
is a truncated vertex in a polygon $V\in \Gamma_1$ and $V$ is a $d$-gon,
$d\ge 3$.   Let $\Gamma'$ be the Brauer configuration algebra obtained
by removing the vertex $\alpha$ as in Section  \ref{subsec-trunc and red}. Then
the Brauer configuration algebra $\Lambda_{\Gamma'}$  associated to $\Gamma'$ is
isomorphic to $\Lambda_\Gamma$.

\end{prop}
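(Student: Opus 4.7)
The plan is to show that a truncated vertex $\alpha$ in a polygon of size $\geq 3$ contributes nothing to the construction of $\Lambda_\Gamma$, so that removing it produces literally the same quiver with the same generating set of relations. Hence $\Lambda_\Gamma$ and $\Lambda_{\Gamma'}$ are presented identically (up to the canonical bijection of polygons), and the isomorphism is the identity on the underlying path algebra.

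First I would verify that $\Gamma'$ is a legitimate Brauer configuration. The only potential issue is condition C3 for the altered polygon $V'$: since $\val(\alpha)\mu(\alpha)=1$, the polygon $V$ must already contain some vertex $\beta\neq\alpha$ with $\val(\beta)\mu(\beta)>1$, and the valence of $\beta$ is unchanged after deleting $\alpha$ from $V$ (because $\oc(\beta,V')=\oc(\beta,V)$ and all other polygons are unaltered). Since $d\geq 3$, we also have $|V'|\geq 2$, so C2 holds, and C1 is immediate because $\alpha$ itself is the only vertex removed from $\Gamma_0$.

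Next I would compare the two quivers. Both $\cQ_\Gamma$ and $\cQ_{\Gamma'}$ have one vertex per polygon, giving a canonical bijection on vertex sets. Arrows in either quiver come from successor sequences at \emph{nontruncated} vertices. Since $\alpha$ is truncated, it contributes nothing to $\cQ_\Gamma$; and for every nontruncated vertex $\beta \neq \alpha$, the multiset of polygons containing $\beta$ is identical in $\Gamma$ and $\Gamma'$, so the successor sequence at $\beta$ (under the induced orientation $\mathfrak o'$) is identical, producing the same arrows. Thus $\cQ_\Gamma = \cQ_{\Gamma'}$ as quivers. For the same reason, the special $\beta$-cycles at every vertex $v$ of $\cQ_\Gamma$ are unchanged by the reduction.

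Finally I would compare the generating relations $\rho_\Gamma$ and $\rho_{\Gamma'}$. Relations of type one are indexed by pairs of nontruncated vertices in a polygon; since $\alpha$ is truncated, the pairs of nontruncated vertices of $V$ and $V'$ coincide, and the special cycles involved are the same, so the type one relations are identical. Relations of types two and three depend only on the special cycles and the quiver, both unchanged. Hence $I_\Gamma = I_{\Gamma'}$ as ideals of $K\cQ_\Gamma = K\cQ_{\Gamma'}$, and $\Lambda_\Gamma \cong \Lambda_{\Gamma'}$. The main obstacle, if any, is purely bookkeeping: checking that every ingredient in the definition of $\Lambda_\Gamma$ either involves only nontruncated vertices, or depends only on data (arrows, special cycles) that is manifestly preserved.
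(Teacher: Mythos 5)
Your proposal is correct and follows essentially the same route as the paper: since $\alpha$ is truncated it produces no special cycles and hence no arrows, so the quivers and the generating relations coincide. The paper's proof is just a terser version of yours; your additional verification that $\Gamma'$ satisfies C1--C3 is a reasonable piece of bookkeeping the paper leaves implicit.
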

\begin{proof}  Since $\alpha$ is truncated, there is a unique polygon, say $V\in\Gamma_1$
in which $\alpha$ is a vertex.  Again since $\alpha$ is truncated there are no
special $\alpha$-cycles.	 Thus, no arrows are created in the quiver of $\Lambda$ by $\alpha$,
and the quivers of $ \Lambda$ and $\Lambda'$ are the same.  Similarly, the ideals of relations
are seen to be the same and the result follows.
\end{proof}

\subsection{Special cycles.}\label{special cycles}
In this section we investigate properties of special cycles and
use these properties to obtain results about the muliplicative
structure of a \bca.

Let $\Lambda=K \cQ/I$ be the \bca\ associated to a
reduced \bc\ $\Gamma$. Denote by $\pi\colon K\cQ\to \Lambda$  the canonical
surjection. Then if no confusion can arise we denote $\pi(x)$ by $\bar x$, for $x \in \K \cQ$. 

We begin with a list of facts about successors and successor sequences translated
into facts about special cycles.  The proofs of these facts are
immediate consequences of the definitions and are left to
the reader.  We denote by (F) facts relating to successor sequences and by (F$'$) the analogous facts expressed in terms of special 
cycles.

\begin{enumerate}
\item[(F1)] If $V'$ is a successor to $V$ in the successor sequence at
the (nontruncated) vertex $\alpha\in\Gamma_0$ then there is a unique arrow in $\cQ$
from $v$ to $v'$ associated to $V'$ being the successor of $V$.
\item[(F$'$1)] If $a\colon v\to v'$ is an arrow in $\cQ$ then  up to cyclic permutation there is a unique 
special cycle $C$
in which  $a$ occurs.  If, in particular, $C$ is a special $\alpha$-cycle, for some $\alpha\in \Gamma_0$ then $a$ is associated to one occurence of
$V'$ being a successor of $V$ in the successor sequence at $\alpha$.
\item[(F2)] For each nontruncated vertex $\alpha\in\Gamma_0$ there
is a unique successor sequence at $\alpha$, up to cyclic permutation.
\item[(F$'$2)]  For each nontruncated vertex $\alpha\in \Gamma_0$, there
is a unique special $\alpha$-cycle, up to cyclic permutation.
\item[(F3)]  If $V'$ is the successor of $V$ at $\alpha\in \Gamma_0$,
 then, after cyclically reordering the successor sequence at $\alpha$,
the sequence begins with the chosen $V<V'$.
\item[(F$'$3)] If $a$ is an arrow in $\cQ$,  then there is a unique
nontruncated $\alpha\in \Gamma_0$ and a unique special $\alpha$-cyce $C$ such
that $a$ is the first arrow in $C$.  In particular, there are no
repeated arrows in a special cycle.
\item[(F$'$4)]  If there is an arrow that occurs in two special cycles
$C$ and $C'$, then there is a nontruncated vertex $\alpha\in\Gamma_0$ such
that both $C$ and $C'$ are special $\alpha$-cycles and $C'$ is
a cyclic permutation of $C$.  
\item[(F5)] The number of polygons in the successor sequence at
a vertex $\alpha\in\Gamma_0$ is $\sum_{V\in\Gamma_1}\oc(\alpha, V)$.
\item[(F$'$5)] The number of arrows in a special $\alpha$-cycle
is $\sum_{V\in\Gamma_1}\oc(\alpha, V)$.
\end{enumerate}

The next result and its corollary show that there is a very tight  multiplicative
structure in  $\Lambda$.

\begin{prop}\label{prop-mult} Let $\Gamma$ be a Brauer configuration with associated
Brauer configuration algebra $\Lambda = \K \cQ /I$ and let $V\in\Gamma_1$,  $\alpha\in\Gamma_0$  a
nontruncated vertex in $\Gamma_0$ that occurs
in $V$. Let $C=a_1a_2\dots a_{\val(\alpha)}$ be a  special $\alpha$-cycle at
$v$ and $C'$ the cyclic permutation $a_{j+1}\dots a_{\val(\alpha)}a_1\dots a_j$.  
Let $p=a_1a_2\dots a_j$, for some $1\le j\le \val(\alpha)-1$ and set
$x=C^sp$ and $y=p{C'}^s$, for some $ 0 \le  s<\mu(\alpha)$.
Then\begin{enumerate}
\item  $a_i\ne a_j$,  for $i\ne j$.
\item $\bar x\ne 0$.
\item If $a$ is an arrow in $\cQ$, then $\overline{xa}\ne 0$ if and only
if $a=a_{j+1}$.
\item $\bar y\ne 0$.
\item If $a$ is an arrow in $\cQ$, then $\overline{ay}\ne 0$ if and only
if $a=a_{\val(\alpha)}$.
\end{enumerate}
\end{prop}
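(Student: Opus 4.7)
My plan is to deduce all five claims from the description of the ideal $I$ together with facts (F$'$1)--(F$'$5) on special cycles. Claim (1) is a restatement of (F$'$3): the $r$-th arrow of $C$ encodes the successor pair $(V_r, V_{r+1})$ in the successor sequence at $\alpha$, so distinct positions yield distinct arrows.

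For (2) and (3), the key observation is that $x = C^s p$ has length $s\val(\alpha) + j$, strictly less than $\mu(\alpha)\val(\alpha)$, and is a proper prefix of $C^{\mu(\alpha)}$. Every consecutive pair of arrows in $x$ is a subpath of the special cycle $C$, so no type three monomial relation annihilates any subword of $x$; $x$ is shorter than any type two relation; and the type one relations identify elements without producing zeros. To rigorously conclude $\bar x \ne 0$, I would construct a $K$-linear spanning set of $\Lambda e_v$ consisting of canonical paths on special cycles (prefixes of $C^{\mu(\alpha)}$ as $\alpha$ ranges over the nontruncated vertices of the polygon $V$ associated with $v$, modulo the type one identifications imposed at $v$) and observe that $\bar x$ lies in this spanning set. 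This gives (2). For (3), when $a = a_{j+1}$, $xa$ is again a prefix of $C^{\mu(\alpha)}$, or in the boundary case $s = \mu(\alpha)-1$, $j = \val(\alpha)-1$ it equals $C^{\mu(\alpha)}$ itself, a socle generator at $v$; the same spanning-set argument gives $\overline{xa} \ne 0$. When $a \ne a_{j+1}$, fact (F$'$4) forces every special cycle containing $a_j$ to be a cyclic permutation of $C$, and in each of these $a_j$ is followed by $a_{j+1}$; hence $a_j a$ is not a subpath of any special cycle. Since $\val(\alpha) \ge 2$ throughout the range of the proposition, the loop exception in the type three relations does not apply, so $a_j a \in I$ and $\overline{xa} = 0$.

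For (4) and (5), the direct computation $pC' = a_1\cdots a_j \cdot a_{j+1}\cdots a_{\val(\alpha)} a_1\cdots a_j = Cp$ gives by induction on $s$ that $y = p(C')^s = C^s p = x$, so (4) is identical to (2). For (5), write $ay = a \cdot x$; the first two arrows of $ay$ are $a$ and $a_1$ in all cases (including $s = 0$, since $j \ge 1$). A symmetric application of (F$'$4) to the predecessor of $a_1$ in any special cycle shows that $aa_1$ lies in a special cycle iff $a = a_{\val(\alpha)}$. If $a = a_{\val(\alpha)}$, then using $a_{\val(\alpha)} C = D \cdot a_{\val(\alpha)}$ where $D = a_{\val(\alpha)} a_1 \cdots a_{\val(\alpha)-1}$ is the cyclic permutation of $C$ starting at the source of $a_{\val(\alpha)}$, one sees that $ay$ is a prefix of $D^{\mu(\alpha)}$, and is nonzero by the argument of (2) applied to $D$; otherwise $aa_1 \in I$ and $\overline{ay} = 0$.

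The main obstacle throughout is the rigorous nonvanishing argument behind (2): ruling out each individual relation type one at a time is easy, but cascading reductions must also be excluded, which requires the construction of an explicit canonical $K$-basis of $\Lambda$. This basis is precisely the combinatorial object underlying the dimension formula in Proposition~\ref{prop-dim-result}, so in the logical order of the paper this combinatorial description must be set up first (or carried out in parallel with the present proposition).
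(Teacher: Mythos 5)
Most of your argument tracks the paper's. Part (1) from (F$'$3) is exactly the paper's step. For the vanishing halves of (3) and (5) the paper likewise observes that $a_ja$ (resp.\ $aa_1$) is a type three relation whenever $a\ne a_{j+1}$ (resp.\ $a\ne a_{\val(\alpha)}$); your extra check that the loop exception cannot intervene, because $a_j$ and $a_1$ belong to the special $\alpha$-cycle and $\val(\alpha)\ge 2$, is a correct detail the paper leaves implicit, as is your identity $p(C')^s=C^sp$ reducing (4) to (2) and your verification that $xa_{j+1}$ and $a_{\val(\alpha)}y$ are again prefixes of powers of special cycles.

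The genuine divergence, and the one real problem, is in the nonvanishing claims (2) and (4). The paper argues directly from the generators of $I$: the type two and three relations are monomial and none occurs as a subpath of $x$, while the type one relations are differences $D^{\mu}-(D')^{\mu}$ of full socle-length cycles, which can only identify two maximal paths with one another and cannot annihilate a strictly shorter path; hence $\bar x\ne 0$. You instead propose to locate $\bar x$ in a ``canonical spanning set'' of $\Lambda e_v$. Membership in a spanning set proves nothing (a spanning set may contain $0$); you need the set to be a basis, and you acknowledge this by pointing to the basis underlying Proposition~\ref{prop-dim-result}. But in the paper's logical order that basis (Proposition~\ref{prop-basis}) and the dimension formula are derived \emph{from} the present proposition, via Lemma~\ref{lem-path} and Theorem~\ref{prop-proj}, so invoking them here is circular. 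Establishing the basis independently (say, by a diamond-lemma/Gr\"obner basis argument resolving all overlaps among the relations) would be a legitimate alternative route, but you do not carry it out; you only flag that it ``must be set up first.'' As written, the key step of (2) is therefore not closed. The quickest repair is to adopt the paper's direct reasoning about the shape of the type one relations rather than routing through a basis.
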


\begin{proof}
Part (1) follows from (F$'$3).
The type two and three relations are monomial paths and the type one relations
are differences $D^{\mu(\alpha)}-{D'}^{\mu(\alpha)}$ where $D$
and $D'$ are special $\alpha$-cycles for some $\alpha\in\Gamma_0$.  Since $x$ and $y$ have no subpaths
that are type two relations or type three
relations, we see (2) and (4) hold.   Similarly, if $a\ne a_{j+1}$, then
$a_ja$ is a type three relation and hence (3) holds.  Finally,
 if $a\ne a_{\val(\alpha)}$ then
  $a a_1$ is  a type three relation and hence (5) holds.
\end{proof}

 Proposition~\ref{prop-mult} has the following
consequence which plays an important role in \cite{GreenSchroll}.

\begin{prop}\label{prop-cycle}   
Let $\Gamma$ be an indecomposable reduced Brauer configuration with associated
Brauer configuration algebra $\Lambda = \K \cQ /I$ and assume
$\rad^2(\Lambda)\ne 0$.
Let $a$ be an arrow in $\cQ$. Then
\begin{enumerate}
\item there is a unique arrow $b$ such that $\overline{ab}\ne 0$, and
\item there is a unique arrow $c$ such that $\overline{ca}\ne 0$.
\end{enumerate}
\end{prop}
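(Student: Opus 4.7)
The approach is to translate the multiplicative question into one about special cycles via fact (F$'$1), and then to apply Proposition~\ref{prop-mult}. Given an arrow $a$ in $\cQ$, fact (F$'$1) furnishes a unique nontruncated vertex $\alpha\in\Gamma_0$ and, up to cyclic permutation, a unique special $\alpha$-cycle containing $a$. Since every cyclic permutation of a special $\alpha$-cycle is again one (at a different source vertex), after such a rearrangement we may write this cycle as $C=a_1a_2\cdots a_{\val(\alpha)}$ with $a=a_1$. I will prove (1); the argument for (2) is symmetric, using parts (4) and (5) of Proposition~\ref{prop-mult} in place of (2) and (3), and yields that the unique $c$ is the cyclic predecessor $a_{\val(\alpha)}$ of $a$ in $C$.

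Because $\alpha$ is nontruncated, $\val(\alpha)\mu(\alpha)>1$, and the argument splits into two cases. If $\val(\alpha)\ge 2$, apply Proposition~\ref{prop-mult} with $s=0$ and $j=1$, so that $x=p=a$; part (3) then states that $\overline{ab}\ne 0$ if and only if $b=a_2$, giving both existence and uniqueness of $b$. If instead $\val(\alpha)=1$, then $a$ is a loop, $\mu(\alpha)\ge 2$, and $C=a$. One checks directly from the three types of relations that $\overline{a^2}\ne 0$: the path $a^2$ is a subpath of $C^{\mu(\alpha)}=a^{\mu(\alpha)}$, strictly shorter than the type-two relation $a^{\mu(\alpha)+1}$; it is not a type-three relation (by the explicit loop exception in the definition, which applies exactly because $\val(\alpha)=1$ and $\mu(\alpha)>1$); and any type-one relation involving $a^{\mu(\alpha)}$ merely identifies it with another cycle power of the same length, which is again nonzero. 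Hence $b=a$ works. For uniqueness, given any $b'\ne a$, fact (F$'$1) implies that $ab'$ is not a subpath of any special cycle (the only continuation of $a$ in $C=a$ is $a$ itself); since $b'\ne a$ the loop exception does not apply, so $ab'$ is a type-three relation and $\overline{ab'}=0$.

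The main subtlety is the degenerate case $\val(\alpha)=1$, to which Proposition~\ref{prop-mult} does not directly apply (its hypothesis $1\le j\le \val(\alpha)-1$ is empty); here the conclusion must be extracted directly from the three types of relations generating $I_\Gamma$, and one uses in an essential way the loop exception carved out in the definition of type-three relations.
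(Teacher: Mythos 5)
Your proof is correct and follows essentially the same route as the paper's: both reduce the claim to the fact that a product of two arrows is nonzero exactly when the second immediately follows the first in the (unique, by (F$'$1)/(F$'$3)) special cycle containing the first. The only real difference is that you invoke Proposition~\ref{prop-mult} explicitly and treat the degenerate loop case $\val(\alpha)=1$, $\mu(\alpha)>1$ separately and carefully, whereas the paper absorbs it into the one-line conclusion "$b=a_2$ and $c=a_{\val(\alpha)}$"; your extra care there is a genuine (if minor) improvement in rigor, not a different method.
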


\begin{proof}  First note that if $P$ is an indecomposable projective
$\Lambda$-module with $P\rad^2(\Lambda)=0$,  then by indecomposability and symmetry   and the definition of the relations,
$P/P\rad(\Lambda)$ and $P\rad(\Lambda)$ are isomorphic simple
$\Lambda$-modules, and hence $\Lambda$ would be isomorphic
to $K[x]/(x)^2$, contradicting the assumption that $\rad^2(\Lambda)
\ne 0$.

From the definition of the type one, two, and three relations, if $x$ and $y$
are arrows in $\cQ$ then $\overline{x y}\ne 0$ if and only
if there is a special cycle in which the arrows $x$ and $y$ occur and
$y$ immediately follows $x$.  By (F$'$3), suppose that $a$ is the
first arrow in the special $\alpha$-cycle $C=a_1\cdots a_{\val(\alpha)}$ for some nontruncated vertex
 $\alpha\in\Gamma_0$.   
It follows that $b=a_2$ and $c=a_{\val(\alpha)}$ and we are done.
\end{proof}

From this result we obtain the following surprising consequence
 which shows that
there is a tight connection between arrows in $\cQ$ and paths
of length 2 in $\cQ$ that are not in the ideal $I$, the ideal generated
by the relations of types one, two, and three.  We introduce the
following notation for this result.   Set
$$\Pi = \{   ab \; | \; a, b \mbox{ arrows in } \cQ \mbox{ and } ab \notin I \}$$
that is $\Pi$ is the set of paths of length 2 in $\cQ$ that are not in $I$.    
The following corollary follows
directly from \ref{prop-cycle}.

\begin{cor}\label{cor-path2}
Let $\Gamma$ be an indecomposable reduced Brauer configuration with associated
Brauer configuration algebra $\Lambda = \K \cQ /I$ and assume that
$\rad^2(\Lambda)\ne 0$.  Let $\Pi$ be as defined above and 
define $f\colon \Pi\to \cQ_1$ by $f(ab)=a$ and $g\colon \Pi\to \cQ_1$
by $g(ab)= b$, where $a,b\in\cQ_1$ with $ab\notin I$. Then the maps
$f$ and $g$ are bijections.
\end{cor}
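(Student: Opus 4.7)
The corollary is essentially a reformulation of Proposition~\ref{prop-cycle} in the language of maps between sets, so the proof should be very short. The plan is to verify bijectivity of $f$ and $g$ directly from the two statements of Proposition~\ref{prop-cycle}, checking injectivity and surjectivity separately.

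First I would handle $f$. For surjectivity, let $a\in \cQ_1$ be an arbitrary arrow. By Proposition~\ref{prop-cycle}(1), there exists an arrow $b$ with $\overline{ab}\ne 0$, which means precisely $ab\notin I$, so $ab\in\Pi$ and $f(ab)=a$. For injectivity, suppose $f(ab)=f(a'b')$, so $a=a'$. Since both $ab$ and $ab'$ lie in $\Pi$, both $\overline{ab}$ and $\overline{ab'}$ are nonzero, and the uniqueness clause in Proposition~\ref{prop-cycle}(1) forces $b=b'$, whence $ab=a'b'$.

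The argument for $g$ is completely symmetric using Proposition~\ref{prop-cycle}(2): given any $b\in \cQ_1$, there is a unique $c\in\cQ_1$ with $\overline{cb}\ne 0$, which simultaneously supplies a preimage $cb\in\Pi$ with $g(cb)=b$ and forces equality of any two paths in $\Pi$ sharing the same second arrow.

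There is no substantive obstacle, since Proposition~\ref{prop-cycle} has already done all the work; the only point that might deserve a line of care is that existence of $b$ with $\overline{ab}\ne 0$ is genuinely needed (not just uniqueness), because we need to exhibit a preimage for each arrow. Both clauses of Proposition~\ref{prop-cycle} include this existence as part of the statement ``\emph{there is a unique}\ldots'', so nothing extra has to be invoked.
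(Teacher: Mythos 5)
Your proof is correct and matches the paper's approach exactly: the paper simply states that the corollary ``follows directly from Proposition~\ref{prop-cycle},'' and your write-up is precisely the routine verification (existence in Proposition~\ref{prop-cycle} gives surjectivity of $f$ and $g$, uniqueness gives injectivity) that the paper leaves implicit.
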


 We call two special cycles in $A$ \emph{equivalent} if
one is a cyclic permutation of the other.  Suppose that there are 
$t$ equivalence classes and let $\cC = \{C_1, \dots, C_t\}$ be a full set of
equivalence class representatives.   

\begin{prop}\label{prop-complete}Let $\Gamma$ be an indecomposable reduced Brauer configuration with associated
Brauer configuration algebra $\Lambda = \K \cQ /I$ and assume that
$\rad^2(\Lambda)\ne 0$.  Let $\cC=
\{C_1, \dots, C_t\}$ be a complete set of representatives of special cycles.  The following statements hold.
\begin{enumerate}
\item  Any arrow of $\cQ$ occurs once in exactly one of the special cycles 
in $\cC$.
\item The cardinality of $\cQ_1$ is
 \[ \left| \cQ_1\right| = 
\sum_{C_i \in \cC } |C_i|  = \sum_{\stackrel{\alpha \in \Gamma_0,}{ \alpha \text{ nontruncated }}}
\sum_{V\in\Gamma_1}\oc(\alpha,V)
\] where $|C_i|$ denotes 
the number of arrows in the cycle $C_i$.
\end{enumerate}
\end{prop}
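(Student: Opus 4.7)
The plan is to assemble the statement directly from the translated facts (F$'$1)--(F$'$5) listed in Section~\ref{special cycles}, so the proof is essentially a bookkeeping argument rather than a new construction.

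First I would handle part (1). For a fixed arrow $a \in \cQ_1$, fact (F$'$1) says that up to cyclic permutation there is a unique special cycle $C$ in which $a$ occurs; since $\cC$ is a set of equivalence class representatives under cyclic permutation, this means that $a$ occurs in exactly one $C_i \in \cC$. Fact (F$'$3) further guarantees that there are no repeated arrows within a special cycle, so this occurrence is unique within $C_i$. This gives (1).

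Part (2) then follows by a counting argument. The map $\Phi\colon\{(a,C_i) : a \in \cQ_1, a \text{ occurs in } C_i\} \to \cQ_1$ given by $(a,C_i)\mapsto a$ is a bijection by part~(1), so
\[
|\cQ_1| \;=\; \sum_{C_i \in \cC} |C_i|.
\]
For the second equality, I would invoke (F$'$2), which says that for each nontruncated vertex $\alpha \in \Gamma_0$ there is exactly one special $\alpha$-cycle up to cyclic permutation; combined with (F$'$1) and (F$'$4) (every special cycle is a special $\alpha$-cycle for a unique $\alpha$), this produces a bijection between $\cC$ and the set of nontruncated vertices of $\Gamma$. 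Finally, (F$'$5) evaluates the length of the special $\alpha$-cycle representative as $\sum_{V \in \Gamma_1} \oc(\alpha, V)$, and summing over $\cC$ gives
\[
\sum_{C_i \in \cC} |C_i| \;=\; \sum_{\substack{\alpha \in \Gamma_0 \\ \alpha \text{ nontruncated}}} \sum_{V \in \Gamma_1} \oc(\alpha, V).
\]

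There is no real obstacle here; the mild subtlety is to make sure the representative set $\cC$ interacts correctly with the equivalence relation ``cyclic permutation,'' so that ``occurs once in exactly one $C_i$'' in part~(1) is not conflated with ``occurs in each cyclic permutation.'' Once the facts (F$'$1)--(F$'$5) are accepted, both (1) and (2) drop out immediately, and the hypothesis $\rad^2(\Lambda) \neq 0$ is used only insofar as it is required in the preceding propositions that justify those facts in this multiplicative setting.
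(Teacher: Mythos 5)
Your proof is correct and follows essentially the same route as the paper: part (1) is deduced from (F$'$1), (F$'$3) and (F$'$4) exactly as in the paper's proof, and part (2) (which the paper dispatches with ``follows from part (1)'') is filled in by you in the intended way, via the bijection between $\cC$ and the nontruncated vertices together with (F$'$5). No gaps.
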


\begin{proof}
Given an arrow $a\in\cQ_1$,  there is a unique nontruncated
$\alpha\in\Gamma_0$ such that $a$ is in a special $\alpha$-cycle
by (F$'$1).  Hence $a$ is in every special $\alpha$-cycle since any
two special $\alpha$-cycles are cyclic permutations of one another
by (F$'$3) and (F$'$4).   Thus, we can assume that $a$ is in exactly one of the special cycles in $\cC$. 
Since special cycles 
have no repeated arrows by (F$'$3), part (1) follows.  Part (2) follows
from part (1) and we are done.
\end{proof}

\section{Properties of Brauer configuration algebras}\label{sec-basic}



 In this section we prove some basic properties of Brauer configuration algebras. Assume that
$\Lambda=K\cQ/I$ where $\Lambda$ is a \bca\ associated to a \bc\ and $\cQ$ is the
quiver of $\Lambda$. We assume from now on that all \bc s are reduced. We show that $I$ is an admissible ideal and that $\Lambda$ is
a symmetric algebra.
We also show that
 a Brauer
 configuration algebra is indecomposable if and only if its Brauer configuration is connected.  Finally, we show
that  $\Lambda$ is a multiserial algebra; that is we show that the heart of $\Lambda$
is a direct sum of uniserial modules.

\subsection{First properties of Brauer configuration algebras and a basis of  $\Lambda$}

If $p$ is a path in a quiver $\cQ$, we let $\ell(p) $ denote the length of $p$.

\begin{lemma}\label{lem-path}Suppose that  $\Lambda=K\cQ/I$ is the Brauer
configuration algebra associated to a Brauer configuration $\Gamma$.
  Let  $C$ be a special cycle and  let $p$ be a path of length $\ge 1$ in $\cQ$ such that
the first arrow of $p$ is the first arrow in $C$.
Then
 $\bar p\ne 0$ if and only if $p$ is a prefix of $C^{\mu(\alpha)}$.
\end{lemma}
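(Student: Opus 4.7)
The plan is to prove both directions by combining Proposition~\ref{prop-cycle} (forward) with Proposition~\ref{prop-mult} (backward), and then handling a single extremal case separately. Write $C = a_1 a_2 \cdots a_n$ with $n = \val(\alpha)$, so by hypothesis $a_1$ is the first arrow of $p$.

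For the forward direction, suppose $\bar p \ne 0$ and write $p = b_1 b_2 \cdots b_\ell$ with $b_1 = a_1$. Since $\bar p \ne 0$, every initial segment is nonzero in $\Lambda$, so in particular $\overline{b_{i-1} b_i} \ne 0$ for each $i \ge 2$. By Proposition~\ref{prop-cycle}, there is a \emph{unique} arrow that can follow $b_{i-1}$ without producing zero. Starting from $b_1 = a_1$, the unique such arrow is $a_2$ (since $\overline{a_1 a_2}\ne 0$ as a subpath of the special cycle $C$), forcing $b_2 = a_2$. Iterating, $b_i = a_{((i-1) \bmod n) + 1}$, so $p$ is a prefix of the periodic word $CCC \cdots$. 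The type-two relation $C^{\mu(\alpha)} a_1 \in I$ then forbids $\ell(p) > \mu(\alpha) n$: otherwise $C^{\mu(\alpha)} a_1$ would appear as a subpath of $p$, forcing $\bar p = 0$. Hence $p$ is a prefix of $C^{\mu(\alpha)}$.

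For the backward direction, suppose $p$ is a prefix of $C^{\mu(\alpha)}$ of length $\ell$. If $\ell < \mu(\alpha)n$, write $\ell = sn + j$ with $0 \le s \le \mu(\alpha)-1$ and $0 \le j \le n-1$. When $j \ge 1$, Proposition~\ref{prop-mult}(2) directly gives $\bar p = \overline{C^s a_1 \cdots a_j} \ne 0$. When $j = 0$ (so $s \ge 1$), $p = C^s$ is a left factor of $C^s a_1$, and $\overline{C^s a_1} \ne 0$ by Proposition~\ref{prop-mult}(2) with $j=1$; a nonzero element cannot have a zero left factor in an ideal quotient, so $\bar p \ne 0$. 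The only remaining case is $p = C^{\mu(\alpha)}$ itself, which needs a separate argument.

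For this extremal case, one checks that $C^{\mu(\alpha)}$ contains no subpath lying in the set of relations of type two or three. A type-three relation $ab$ would require two consecutive arrows of $C^{\mu(\alpha)}$ that do not coexist in a special cycle, contradicting that every consecutive pair sits inside $C$ itself. A type-two relation $D^{\mu(\beta)}b$, where $b$ is the first arrow of a special $\beta$-cycle $D$, would force by (F$'$1), (F$'$3), and (F$'$4) that $D$ is a cyclic permutation of $C$ with $\beta = \alpha$, yielding a subpath of length $\mu(\alpha)n + 1$, which exceeds $\ell(C^{\mu(\alpha)})$. Hence in the intermediate algebra $K\cQ/(I_2 + I_3)$ obtained by imposing only the monomial (type two and three) relations, $\overline{C^{\mu(\alpha)}}$ is a nonzero basis element. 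Passing to $\Lambda$ by imposing the type-one relations identifies $\overline{C^{\mu(\alpha)}}$ with the classes $\overline{(C')^{\mu(\alpha')}}$ of the other maximal special cycle powers at the same vertex $v$; these differences only glue nonzero basis elements together and do not collapse any of them to zero. Thus $\overline{C^{\mu(\alpha)}} \ne 0$ in $\Lambda$, completing the proof.

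The main obstacle is this final extremal case: proving non-vanishing at the socle level requires a careful argument that the type-one relations merely equate parallel maximal cycle-powers rather than annihilating them. This is precisely the combinatorial content that will later underlie the symmetry of $\Lambda_\Gamma$; everything else follows mechanically from repeated application of the uniqueness statement in Proposition~\ref{prop-cycle} and the explicit non-vanishing in Proposition~\ref{prop-mult}.
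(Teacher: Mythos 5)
Your proof is correct and follows essentially the same route as the paper's: the first deviation of $p$ from the arrows of $C$ yields a type three relation, exceeding length $\ell(C^{\mu(\alpha)})$ yields a type two relation, and prefixes of $C^{\mu(\alpha)}$ survive because the only relations that could meet them are the type one differences, which merely identify socle elements. You are in fact somewhat more careful than the paper on the extremal case $p = C^{\mu(\alpha)}$, where the paper simply asserts that ``type one relations do not affect $p$.''
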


\begin{proof}
Since the first arrow of $p=a_1\cdots a_m$ is the same as the first arrow of $C$, either $p$
is a prefix of $C^s$ for some $s$ or there is an $i$ such that $a_i$ is an arrow in $C $
but  $a_ia_{i+1}$ is not in $C$.
First assume $a_ia_{i+1}$ is not in $C$.  Then $a_ia_{i+1}$ is not in any special cycle
by Proposition \ref{prop-mult}(3).  Hence $a_ia_{i+1}$ is a type three relation and $\bar p=0$.

Assume that $C$ is a special $\alpha$-cycle for some nontruncated
vertex $\alpha\in\Gamma_0$.
Now suppose that $p$ is a prefix of $C^s$. Then either
$\ell(p)\le\ell(C^{\mu(\alpha)})$ or
$\ell(p)>\ell(C^{\mu(\alpha)})$.

First assume that $\ell(p)>\ell(C^{\mu(\alpha)})$.
Then $p$ contains $C^{\mu(\alpha)}a_1$ which is a type two relation.  Hence, $\bar p=0$.

Now suppose that $\ell(p)\le\ell(C^{\mu(\alpha)})$. Then $p$ contains no relations
of type two or three.  By the length assumption, type one relations do not
affect $p$ and we see that $\bar p\ne 0$. The proof is complete.
\end{proof}

 We let $J$ denote the two sided
ideal in $K\cQ$ generated by the arrows in  $\cQ$.
Recall that the ideal $I$ in $K\cQ$ is
\emph{admissible} if $J^N\subseteq I\subseteq J^2$, for some $N\ge 2$.
Clearly, if $I$ is admissible then $\Lambda$ is finite dimensional.

\begin{prop} \label{prop-fd} Let  $\Lambda=K\cQ/I$ be the Brauer
configuration algebra associated to a Brauer configuration $\Gamma$. Then
$I$ is admissible and $\Lambda$ is a symmetric algebra.
\end{prop}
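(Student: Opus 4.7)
The plan is first to establish admissibility of $I$, and then to exhibit a symmetrizing trace form on $\Lambda$.

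For $I\subseteq J^2$: every type three generator is a path of length $2$; a type two generator $C^{\mu(\alpha)}a$ has length $\val(\alpha)\mu(\alpha)+1\ge 2$; and a type one generator $C^{\mu(\alpha_i)}-(C')^{\mu(\alpha_j)}$ is a difference of words of lengths $\val(\alpha_i)\mu(\alpha_i)$ and $\val(\alpha_j)\mu(\alpha_j)$, which are both $\ge 2$ since $\alpha_i,\alpha_j$ are nontruncated. For $J^N\subseteq I$, set $N=1+\max\{\val(\alpha)\mu(\alpha):\alpha\text{ nontruncated}\}$ and suppose $p=a_1\cdots a_m$ is a path with $\bar p\ne 0$ and $m\ge N$. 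Avoidance of type three relations forces each consecutive pair $a_ia_{i+1}$ to lie in some special cycle; by (F$'$4) all these cycles are cyclic permutations of a single special $\alpha$-cycle $C$, and by (F$'$3) $a_1$ is the first arrow of a unique such $C$, so $p$ is a prefix of $C^{\mu(\alpha)}$. Lemma~\ref{lem-path} then gives $m\le\val(\alpha)\mu(\alpha)<N$, a contradiction. Hence $I$ is admissible and $\Lambda$ is finite dimensional.

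For the symmetric structure, I would first fix a basis $\cB$ of $\Lambda$ consisting of: (a) the idempotents $\bar e_v$ for $v\in\cQ_0$; (b) the images of all nontrivial proper prefixes of $C^{\mu(\alpha)}$ as $C$ ranges over special cycles; and (c) for each polygon $V\in\Gamma_1$, a single socle element $s_V:=\overline{C^{\mu(\alpha)}}$ where $\alpha$ is any nontruncated vertex of $V$ and $C$ is a special $\alpha$-cycle at $v_V$, which is well-defined by the type one relations. Spanning follows from Lemma~\ref{lem-path}; linear independence reduces to verifying that the stated relations exhaust $I$. Define $\lambda\colon\Lambda\to K$ to be the $K$-linear functional vanishing on (a) and (b) and sending each $s_V$ to $1$. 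To verify $\lambda(xy)=\lambda(yx)$, reduce by bilinearity to $x,y\in\cB$; both values vanish unless one of the products is a socle basis element, and by the rewriting above this forces $xy=C^{\mu(\alpha)}$ for some special $\alpha$-cycle $C=a_1\cdots a_{\val(\alpha)}$ with $x=a_1\cdots a_i$ and $y=a_{i+1}\cdots a_{\val(\alpha)\mu(\alpha)}$ (indices read cyclically mod $\val(\alpha)$). The concatenation $yx$ is then the cyclic rotation of this word by $i$ positions, which equals $(C')^{\mu(\alpha)}$ for the special $\alpha$-cycle $C'=a_{i+1}\cdots a_{\val(\alpha)}a_1\cdots a_i$ at the quiver vertex $t(a_i)$; hence $yx$ is again a socle basis element, so $\lambda(yx)=1=\lambda(xy)$. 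The idempotent cases, and the contrapositive direction, follow by the same rotation argument.

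For nondegeneracy of the associated bilinear form $(x,y)\mapsto\lambda(xy)$, it suffices to show $\lambda$ is nonzero on each simple summand of $\soc(\Lambda)$, because any nonzero right ideal contained in $\ker\lambda$ must meet the socle nontrivially. The type two relations yield $s_V\rad(\Lambda)=\rad(\Lambda)s_V=0$, so each $K\, s_V$ is a simple right ideal isomorphic to the simple top at $v_V$; a dimension count against the basis $\cB$ then gives $\soc(\Lambda)=\bigoplus_{V\in\Gamma_1}K\, s_V$. Since $\lambda(s_V)=1$ for every $V$, the form is nondegenerate, so $\lambda$ is symmetrizing and $\Lambda$ is symmetric. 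The principal obstacle throughout is the bookkeeping imposed by the type one relations: full powers $C^{\mu(\alpha)}$ at the same quiver vertex $v_V$ but indexed by different nontruncated vertices $\alpha\in V$ are identified to a single $s_V$, whereas the full powers of cyclic permutations $C'$ of $C$ \emph{starting at different} quiver vertices produce distinct socle basis elements $s_{V'}$; keeping these two phenomena straight is precisely what reconciles the cyclic rotation argument with the trace property.
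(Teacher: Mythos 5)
Your proof is correct and follows essentially the same route as the paper's: the same bound $N$ on path lengths via Lemma~\ref{lem-path} and the facts (F$'$3)/(F$'$4) for admissibility, and the same trace form that is $1$ exactly on the full powers $C^{\mu(\alpha)}$ spanning the socle and $0$ elsewhere, with nondegeneracy deduced from the socle basis. The only cosmetic difference is that you first fix the explicit $K$-basis (which the paper records separately as Proposition~\ref{prop-basis}) and then spell out the cyclic-rotation verification of $\lambda(xy)=\lambda(yx)$ that the paper leaves to the reader.
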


\begin{proof}  From the definition of the three types of relations, we
see that $I$ is contained in $J^2$.  Consider the set 
\[\cS=\{C^{\mu(\alpha)}\mid \alpha \text{ is a nontruncated
vertex and } C \text{ is a  special }\alpha\text{-cycle } \}.\] Let $N=\max_{C^{\mu(\alpha)}\in\cS}(\ell(C^{\mu(\alpha)})) +1$.
If $p$ is a path of length $N$, then $p$ cannot be a prefix of
any element in $\cS$.  Note that by (F$'$3), 
we see that every arrow in $\cQ$ is the prefix of some cycle in $\cS$.
Using this observation and 
 Lemma \ref{lem-path}, we see that
$\bar p=0$; that is, $p\in I$.  Thus $J^N\subseteq I$ and
it follows that  $  I$   is admissible.

Using Lemma \ref{lem-path}, the reader may check
that the two sided socle of $\Lambda$ is generated by the
elements of $\cS$.  In fact, for each $V\in\Gamma_1$,  choose
a nontruncated vertex $\alpha$ of $V$ and  one  special $\alpha$-cycle at $v$,
$C_{V}$, in $\cQ$.  Then $\{\overline{C_{V}^{\mu(\alpha)}}\mid V\in\Gamma_1\}$
forms a $K$-basis of the two sided socle of $\Lambda$.

To show that $\Lambda$ is symmetric, let $\phi\colon \Lambda\to K$
be the $K$-linear form defined as follows.
Let $p$ be a path in $\cQ$.  Then $\phi(\bar p)=1$ if and only if
$p\in \cS$.  If $p\notin \cS$, let $\phi(\bar p)=0$.
It is easy to show that $\phi(ab)=\phi(ba)$.   That $\Ker \phi$ contains
no left or right ideals follows from the description of a $K$-basis
of the two sided socle of $\Lambda$ and that $\phi$ is $1$ on elements of $\cS$.
It follows that $\Lambda$ is a symmetric algebra.
\end{proof}

The next result provides a useful $K$-basis
of $\Lambda$.

\begin{prop}\label{prop-basis}Let $\Lambda$ be the Brauer configuration
algebra associated to the Brauer configuration $\Gamma$.   For each $V\in\Gamma_1$,  choose
a nontruncated vertex $\alpha$ of $V$ and  exactly one special $\alpha$-cycle
$C_{V}$ at $v$. Then keeping the above notation, we have  
\[\{\bar p\mid p \text{ is a proper prefix of some }C^{\mu(\alpha)} \text{ where }
C\text{ is a special }\alpha\text{-cycle}\}\cup \{\overline{C_V^{\mu(\alpha)}}\mid
V\in\Gamma_1\}\] 
is a $K$-basis of $\Lambda$. 
\end{prop}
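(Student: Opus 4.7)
The plan is to establish spanning and linear independence separately, leaning heavily on Proposition~\ref{prop-mult}, Proposition~\ref{prop-cycle}, and Lemma~\ref{lem-path}.

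For spanning, I would start with an arbitrary path $p$ in $\cQ$ and reduce $\bar p$ to a $K$-linear combination of the proposed basis elements. If some consecutive pair $a_i a_{i+1}$ of arrows in $p$ is not a subpath of a special cycle, then $a_i a_{i+1}$ is a type three relation and $\bar p = 0$. Otherwise, Proposition~\ref{prop-cycle} forces each $a_{i+1}$ to be the unique arrow with $\overline{a_i a_{i+1}} \ne 0$, so $p$ is determined by its first arrow and, once this arrow is fixed, must be a prefix of some $C^s$ for a special $\alpha$-cycle $C$. By Lemma~\ref{lem-path}, $\bar p \ne 0$ implies that $p$ is a prefix of $C^{\mu(\alpha)}$. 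If $p$ is a proper prefix, it already lies in the proposed set; if $p = C^{\mu(\alpha)}$ is a full special cycle at the vertex $v$ corresponding to a polygon $V$, then the type one relations identify $\bar p$ with $\overline{C_V^{\mu(\alpha')}}$ for the chosen representative at $v$. Trivial paths $e_v$ are covered as proper prefixes of length zero.

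For linear independence, I would consider a putative relation
\[
\sum_{p} c_p\, \bar p \;+\; \sum_{V\in\Gamma_1} d_V\, \overline{C_V^{\mu(\alpha_V)}} \;=\; 0 \quad \text{in } \Lambda,
\]
with the first sum ranging over proper prefixes. The elements $\overline{C_V^{\mu(\alpha_V)}}$ are already known, from the proof of Proposition~\ref{prop-fd}, to form a $K$-basis of the two-sided socle of $\Lambda$; in particular, they are $K$-linearly independent. Moreover, any proper prefix $p$ of $C^{\mu(\alpha)}$ admits a strict continuation: by Proposition~\ref{prop-mult}(3) there is an arrow $a$ with $\overline{p a} \ne 0$, so $\bar p \notin \soc(\Lambda)$. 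Thus modulo $\soc(\Lambda)$ only the terms $c_p \bar p$ contribute, and the task reduces to independence of proper prefixes in $\Lambda/\soc(\Lambda)$. For this, I would left- and right-multiply by idempotents $e_v,e_{v'}$ to separate the relation by source and target, and on each piece $e_{v'} \Lambda e_v$ induct on the length of the prefixes: multiplying by the forced next arrow (provided by Proposition~\ref{prop-mult}(3)) lets one peel off the longest prefix and isolate each coefficient in turn.

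The main subtlety will be that the type one relations, although they never involve proper prefixes, identify full $\mu$-powers of special cycles of possibly different lengths at the same source vertex, so the obvious length grading on $K\cQ$ does not descend to $\Lambda$. Fortunately, type one relations apply only to full cycles $C^{\mu(\alpha)}$ and thus do not interfere with the proper-prefix half of the argument, while all identifications among the full cycles are already absorbed into the single chosen representative $C_V^{\mu(\alpha_V)}$ per polygon. Combining spanning with this inductive independence argument yields the stated $K$-basis of $\Lambda$.
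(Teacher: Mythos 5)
Your proposal is correct and follows essentially the same route as the paper's (much terser) proof: the socle part is handled by the basis of $\soc(\Lambda)$ established in Proposition~\ref{prop-fd}, spanning comes from Lemma~\ref{lem-path} together with the forced-successor property of arrows, and independence rests on the observation that the only relations touching proper prefixes of special cycles are the monomial ones of types two and three. Your write-up simply fills in the inductive details that the paper leaves to the reader.
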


\begin{proof} We have seen that  $\{\overline{C_V^{\mu(\alpha)}}\mid
V\in\Gamma_1\}$ is a $K$-basis of the socle of $\Lambda$.
Using that every arrow is the start of a special cycle, 
Lemma \ref{lem-path} and that the only relations affecting
proper subpaths of the special cycles are monomial relations
(types two and three), the result follows.
\end{proof}

\subsection{Decomposable and indecomposable Brauer configuration algebras}

We start by investigating disconnectedness of Brauer configurations.

\begin{prop}\label{prop-dis} Suppose the Brauer configuration
$\Gamma$ is disconnected and decomposes into Brauer configurations $\Gamma'\cup\Gamma''$.
Then the associated Brauer configuration algebra $\Lambda_{\Gamma}$
is isomorphic to the product $\Lambda_{\Gamma'}\times \Lambda_{\Gamma''}$.
\end{prop}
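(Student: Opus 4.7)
The plan is to show that the quiver $\cQ_\Gamma$ decomposes as a disjoint union of quivers $\cQ_{\Gamma'} \sqcup \cQ_{\Gamma''}$ and that the ideal of relations splits accordingly, so that the path algebra modulo relations factors as a product.

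First I would verify that the vertex sets of the quivers decompose correctly. By Definition~\ref{BC} and the definition of disconnectedness, the set of polygons $\Gamma_1$ is partitioned as $\Gamma'_1 \sqcup \Gamma''_1$, and by construction of $\cQ_\Gamma$ there is one vertex of $\cQ_\Gamma$ per polygon. Thus $(\cQ_\Gamma)_0 = (\cQ_{\Gamma'})_0 \sqcup (\cQ_{\Gamma''})_0$. Next, for the arrows: every arrow of $\cQ_\Gamma$ arises from a successor $V \to V'$ at some nontruncated vertex $\alpha \in \Gamma_0$. Since $\{\Gamma'_0, \Gamma''_0\}$ partitions $\Gamma_0$, the vertex $\alpha$ lies in exactly one of the two subsets; and by condition (2) in the definition of disconnectedness, all polygons containing $\alpha$ lie in the corresponding $\Gamma'_1$ or $\Gamma''_1$. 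Hence the successor sequence at $\alpha$ lies entirely in one side, so the arrows of $\cQ_\Gamma$ also partition, giving $\cQ_\Gamma = \cQ_{\Gamma'} \sqcup \cQ_{\Gamma''}$ as quivers (with no arrows between the two pieces). Therefore $K\cQ_\Gamma \cong K\cQ_{\Gamma'} \times K\cQ_{\Gamma''}$ as $K$-algebras.

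Second, I would verify that the special cycles of $\Gamma$ are exactly the disjoint union of the special cycles of $\Gamma'$ and of $\Gamma''$. This follows from the previous step since a special $\alpha$-cycle is built entirely from the successor sequence at $\alpha$, and $\alpha$ lies on exactly one side. Consequently the type one relations $C^{\mu(\alpha_i)} - (C')^{\mu(\alpha_j)}$ at a polygon $V$ and the type two relations $C^{\mu(\alpha)}a$ each lie entirely in $K\cQ_{\Gamma'}$ or entirely in $K\cQ_{\Gamma''}$.

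For the type three relations $ab$ (paths of length two not contained in any special cycle), I would argue that if $ab$ is a nonzero path in $K\cQ_\Gamma$ then $a$ terminates at a vertex $v$ of $\cQ_\Gamma$ which must also be the source of $b$; since $v$ lies in exactly one of $\cQ_{\Gamma'}$ or $\cQ_{\Gamma''}$, both arrows lie on the same side. Hence each type three relation too is contained in one of $K\cQ_{\Gamma'}$ or $K\cQ_{\Gamma''}$. Gathering these observations, $I_\Gamma = I_{\Gamma'} \oplus I_{\Gamma''}$ under the decomposition $K\cQ_\Gamma \cong K\cQ_{\Gamma'} \times K\cQ_{\Gamma''}$, and passing to the quotient yields the algebra isomorphism
\[
\Lambda_\Gamma = K\cQ_\Gamma / I_\Gamma \;\cong\; (K\cQ_{\Gamma'}/I_{\Gamma'}) \times (K\cQ_{\Gamma''}/I_{\Gamma''}) = \Lambda_{\Gamma'} \times \Lambda_{\Gamma''}.
\]
The only mild obstacle is the bookkeeping for the type three relations, where one has to remember that composability of two arrows forces them to share a vertex and hence to lie on the same side of the decomposition; everything else is a direct translation of the disconnectedness conditions into the language of quivers and relations.
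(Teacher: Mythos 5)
Your proof is correct and follows essentially the same route as the paper: the paper's argument simply observes that disconnectedness forces the quiver to split as a disjoint union with no arrows between the two pieces and declares that "the result follows," while you spell out the bookkeeping for the arrows and the three types of relations that the paper leaves implicit.
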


\begin{proof}
By definition of a disconnected Brauer configuration given in
Section \ref{section-config}, there can be no arrows between vertices in $\cQ_{\Gamma'}$
and $\cQ_{\Gamma''}$ and the result follows.

\end{proof}

We prove the converse.

\begin{prop}\label{conv-conn}
If the Brauer configuration $\Gamma$ is connected then
the Brauer configuration algebra associated to $\Gamma$
is indecomposable as an algebra.
\end{prop}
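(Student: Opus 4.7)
The plan is to reduce indecomposability of $\Lambda_\Gamma$ to connectedness of the quiver $\cQ_\Gamma$, and then derive connectedness of $\cQ_\Gamma$ from connectedness of $\Gamma$. By Proposition~\ref{prop-fd}, $\Lambda_\Gamma = K\cQ_\Gamma/I_\Gamma$ with $I_\Gamma$ admissible, so $\Lambda_\Gamma$ is a basic finite-dimensional algebra. For such algebras, indecomposability as an algebra is equivalent to connectedness (as an undirected graph) of the underlying quiver. I would recall the short argument: if $\Lambda_\Gamma = A \times B$ is a nontrivial product, the central decomposition $1 = e_A + e_B$ partitions the vertex set $\cQ_0$ according to whether $e_v \in A$ or $e_v \in B$; for any arrow $a\colon v \to v'$, one has $a = e_v a = a e_{v'}$, so if $e_v$ and $e_{v'}$ lie in different factors then $a \in A \cap B = 0$ in $A \times B$, which contradicts that arrows are nonzero in $\Lambda_\Gamma$ (because $I_\Gamma \subseteq J^2$).

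It therefore suffices to prove the contrapositive of the main claim: if $\cQ_\Gamma$ is disconnected, then $\Gamma$ is disconnected. Suppose $\cQ_\Gamma$ admits a nontrivial partition of its vertex set into $\cQ'_0$ and $\cQ''_0$ with no arrow joining the two parts. Let $\Gamma_1'$ (respectively $\Gamma_1''$) consist of those polygons whose associated quiver vertex lies in $\cQ'_0$ (resp. $\cQ''_0$); this partitions $\Gamma_1$.

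The central point is to lift this partition to the vertex set $\Gamma_0$. Let $\alpha \in \Gamma_0$ be any nontruncated vertex, and let $V_1 < V_2 < \dots < V_{\val(\alpha)}$ be the successor sequence at $\alpha$. By construction there is an arrow in $\cQ_\Gamma$ from $v_i$ to $v_{i+1}$ for each $i$ (indices cyclic), so all the polygons containing $\alpha$ correspond to quiver vertices in the same component of $\cQ_\Gamma$. Hence every nontruncated vertex has all its incident polygons either entirely in $\Gamma_1'$ or entirely in $\Gamma_1''$, and we can unambiguously assign it to $\Gamma_0'$ or $\Gamma_0''$. A truncated vertex lies in a unique polygon by definition, so it inherits its assignment from that polygon. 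Restricting $\mu$ and $\mathfrak o$ to these subsets yields Brauer configurations $\Gamma'$ and $\Gamma''$ (the defining conditions C1--C3 are preserved since they were satisfied polygon-by-polygon in $\Gamma$), and by construction $\Gamma = \Gamma' \cup \Gamma''$ is a disconnection, both parts nonempty because $\cQ'_0$ and $\cQ''_0$ are. This contradicts connectedness of $\Gamma$.

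The only mild obstacle is the folklore algebra fact in the first paragraph; everything else is a direct translation between the combinatorial notion of (dis)connectedness given in Section~\ref{section-config} and the graph-theoretic (dis)connectedness of $\cQ_\Gamma$, using that successor sequences are cyclic so that any two polygons sharing a nontruncated vertex are joined by a path of arrows. Combined with Proposition~\ref{prop-dis}, this also gives the full biconditional ``$\Gamma$ connected $\iff$ $\Lambda_\Gamma$ indecomposable'' recorded in Theorem~A.
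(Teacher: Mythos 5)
Your proposal is correct and follows essentially the same route as the paper: both arguments pass to the contrapositive, partition the polygons according to the components of the quiver, and use the successor sequence (equivalently, the special $\alpha$-cycle) at each nontruncated vertex $\alpha$ to show that all polygons containing $\alpha$ lie in one component, which lifts the partition to $\Gamma_0$ and exhibits a disconnection of $\Gamma$. The only difference is that you spell out the folklore fact that a product decomposition of a basic algebra with admissible ideal forces the quiver to split, which the paper takes for granted.
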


\begin{proof}
We show that if a Brauer configuration algebra is decomposable then the Brauer configuration is disconnected.
Suppose that $\Gamma$ is a Brauer configuration
with associated Brauer configuration algebra $\Lambda$.
Assume that
 $\Lambda$ is not indecomposable and that
$\Lambda\cong \Lambda'\times \Lambda''$.  Let $\cQ, \cQ'$, and $\cQ''$ be the
quivers of  $\Lambda, \Lambda'$ and $\Lambda''$ respectively.  Then
$\cQ$ is the disjoint union of $\cQ'$ and $\cQ''$.   Let \sloppy
$A=\{V \in \Gamma_1\mid \text{ the vertex in }\cQ\text{ associated to }V\text{ is in }\cQ'\}$ and
$B=\{V \in \Gamma_1\mid \text{ the vertex in }\cQ\text{ associated to }V\text{ is in }\cQ''\}$.
Note that  $ A\cup B=\Gamma_1$ and $A\cap B=\emptyset$.

Let $\cA$ be the set of vertices of the polygons in $A$ and $\cB$ be the set vertices of the polygons
in $B$.  Then we show that $\cA\cap\cB=\emptyset$.
Indeed, suppose there is a vertex $\alpha\in\Gamma_0$ that 	is a vertex
of a polygon $V\in A$ and of a polygon $V'\in B$.  Then both $V$ and $V'$ occur in the successor sequence at $\alpha$.
Hence, if $C$  is a special $\alpha$-cycle, both $v$ and $v'$ occur as vertices in $C$.  This contradicts the fact
that there are no paths from $v$ to $v'$ in the quiver of $\Lambda$ since
$v$   is a vertex in $\cQ'$ and $v'$ is a vertex in $\cQ''$.
  Hence,
by condition C1, we get a partition of $\Gamma_0 =\cA \cup \cB$.

 In order to show  that $\Gamma$ is disconnected, assume for a contradiction
 that $\Gamma$ is connected.  Hence, since $\Gamma_1 = A \cup B$, $A \cap B = \emptyset$, $\Gamma_0 =   \cA \cup \cB$  and $\cA\cap\cB=\emptyset$, for $\Gamma$ to be connected there must be some polygon $V$ that has
vertices from both $\cA$ and $\cB$; that is, there is a polygon $V\in\Gamma_1$ and
$\alpha\in \cA$, and $\beta\in\cB$, such that $\alpha $ and $\beta$ are vertices of $V$.
This contradicts $A\cap B=\emptyset$, finishing the proof.
\end{proof}

\subsection{Gradings of Brauer configuration algebras}


\begin{prop}\label{prop-grad} Let $\Lambda$ be the Brauer configuration
algebra associated to a connected  Brauer configuration $\Gamma$.  The algebra
 $\Lambda$ has a length grading induced from the path algebra $K\cQ$
if and only if there is an $N\in\mathbb Z_{>0}$ such that, for each nontruncated vertex
$\alpha$ in $\Gamma_0$, $\val(\alpha)\mu(\alpha)=N$. 
\end{prop}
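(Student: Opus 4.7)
The key reformulation I would use is that $\Lambda = K\cQ/I$ inherits the length grading from $K\cQ$ precisely when $I$ is a length-homogeneous ideal, so the task becomes checking homogeneity of the three types of generators listed in Section~3. Relations of types two and three are single monomial paths and hence automatically homogeneous; only the type one relations $C^{\mu(\alpha)}-(C')^{\mu(\beta)}$ require attention, where $\alpha,\beta$ are nontruncated vertices in a common polygon $V$ and $C$, $C'$ are special $\alpha$- and $\beta$-cycles at the corresponding quiver vertex $v$. By (F$'$5) the lengths of these two monomials are $\val(\alpha)\mu(\alpha)$ and $\val(\beta)\mu(\beta)$, so the relation is length-homogeneous if and only if these two quantities coincide.

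The ($\Leftarrow$) direction is then immediate: existence of a single $N$ forces every type one generator to be homogeneous of degree $N$, so $I$ is length-homogeneous. For ($\Rightarrow$), I would argue by contradiction: assume $I$ is homogeneous but some pair of nontruncated vertices $\alpha,\beta$ of a common polygon $V$ satisfies $\val(\alpha)\mu(\alpha)\neq \val(\beta)\mu(\beta)$. The corresponding type one generator then decomposes into two homogeneous pieces of distinct degrees, each of which would separately have to lie in $I$. But this is ruled out by Proposition~\ref{prop-basis}, which exhibits $\overline{C^{\mu(\alpha)}}$ as a nonzero basis element of the socle of $\Lambda$, contradicting $C^{\mu(\alpha)}\in I$. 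Hence $\val(\alpha)\mu(\alpha)$ must be constant on the nontruncated vertices of each individual polygon.

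It remains to promote this per-polygon constant to a global one. Given any two nontruncated vertices $\alpha_0$ and $\alpha_r$, connectedness of $\Gamma$ provides a sequence of polygons $V_0,\dots,V_r$ with $\alpha_0\in V_0$, $\alpha_r\in V_r$, and consecutive polygons $V_i,V_{i+1}$ sharing some vertex $\gamma_i$. Since $\Gamma$ is reduced, a truncated vertex has valence $1$ and therefore lies in exactly one polygon, so every $\gamma_i$ is automatically nontruncated. Applying the per-polygon equality inside each $V_{i+1}$ (which contains both nontruncated vertices $\gamma_i$ and $\gamma_{i+1}$, or just $\gamma_i=\gamma_{i+1}$ if $V_{i+1}$ has a unique nontruncated vertex) chains together to give $\val(\alpha_0)\mu(\alpha_0)=\val(\alpha_r)\mu(\alpha_r)$, yielding the desired global $N$.

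The main obstacle I anticipate is this final propagation step, since a polygon in the chain could have only one distinct nontruncated vertex and thus contribute no nontrivial type one equality of its own. The cure is precisely the reduced hypothesis: because truncated vertices are never shared between polygons, the connecting vertices $\gamma_i$ must all be nontruncated, so even a polygon with a single distinct nontruncated vertex forwards the invariant through the forced coincidence $\gamma_i=\gamma_{i+1}$ when that is the only available choice. Every other step consists of unwinding the definitions of the three relation types and invoking Proposition~\ref{prop-basis}, so the argument should be quite short once this observation is in place.
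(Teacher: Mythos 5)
Your proof is correct and follows essentially the same route as the paper: reduce to homogeneity of the type one generators, note their two monomials have lengths $\val(\alpha)\mu(\alpha)$ and $\val(\beta)\mu(\beta)$, and propagate the per-polygon equality globally via connectedness. Your only (welcome) refinement is to justify the forward direction by observing that a non-homogeneous binomial generator would force $C^{\mu(\alpha)}\in I$, contradicting Proposition~\ref{prop-basis}, where the paper asserts the homogeneity of the given generators more directly.
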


\begin{proof}  If $\alpha\in\Gamma_0$ is a truncated vertex in the
2-gon $V$, then the projective indecomposable associated to $V$ is
uniserial and gives rise to a monomial relation of type two.  Monomial
relations are homogeneous under any grading.

 Suppose $\Lambda$ has a length grading.
If $C$ is a special $\alpha$-cycle, then the length of $C^{\mu(\alpha)}$ is $\val(\alpha)\mu(\alpha)$.
Since type two and three relations are  monomials,   the ideal of relations $I$ is generated
by length homogeneous relations if the type one relations are
length homogeneous.  Thus, for $I$ to be generated by length homogeneous elements, all type one relations must be length homogeneous. But this
implies that
 for a  special $\alpha$-cycle $C$ at $v$ and a special $\beta$-cycle $C'$ at $v$, the relation   $C^{\mu(\alpha)}-(C')^{\mu(\beta)}$  must be length homogeneous.
Thus for all vertices $\alpha$
in a polygon, the $\val(\alpha)\mu(\alpha)$ must all be equal.  Using connectedness, we see
that if $\alpha$ is a vertex in  polygon $V$ and $\beta$ is a vertex in polygon $V'$,  then there
is a sequence of polygons $V_1,\dots, V_k$ and vertices $\alpha_i,\beta_i$ in $V_i$, such
that $\alpha=\alpha_1$, $\beta_i=\alpha_{i+1}$ for $i=1,\dots,k-1$ and
$\beta=\beta_k$. The result now follows.

The converse immediately follows from the fact the if  for each nontruncated vertex
$\alpha$ in $\Gamma_0$, $\val(\alpha)\mu(\alpha)=N$ then all relations are length homogeneous.
\end{proof}

\subsection{Projective indecomposable modules and uniserial modules}\label{subsec-mult}
We now describe the projective-injective indecomposable modules and the non-projective uniserial modules over a Brauer configuration algebra.

Let $\Lambda$ be a \bca\ associated to a reduced Brauer configuration $\Gamma$.
In what follows we adopt the following notation, if $V$ is a polygon in $\Gamma$ and $v$ is the vertex in
the quiver of $\Lambda$ associated to $V$, then we let $P_V$ be the projective $\Lambda$-module (resp. $S_V$ the simple
$\Lambda$-module)
associated to $v$.
Let $V$  be a  polygon in $\Gamma_1$, $\alpha$ a nontruncated vertex in $V$ with $n = \val(\alpha)$, and $C=a_1a_2\cdots a_{n}$
 a special $\alpha$-cycle at $v$.  Let $a_i$ be an arrow from $v_i\to v_{i+1}$ and let $V_i$
be the polygon in $\Gamma_1$ associated to $v_i$, for $i=1,\dots,{n}$.  Note that $V_1=V_{{n}+1}=V$.
It follows from  Proposition \ref{prop-cycle}  and Proposition \ref{prop-basis} that based on the choice of $C$, we can now
define a chain of uniserial submodules in the following way.

Set $U_{n\mu(\alpha)} = U_{n\mu(\alpha)}(C)$ to be isomorphic to the simple module
associated to $v=v_{{n}+1}$. Note that $U_{n\mu(\alpha)}$ has $K$-basis $ \{ \overline{(a_1 a_2 \dots a_{n})^{\mu(\alpha)}} \} =\{\overline{
C^{\mu(\alpha) }} \}$. Assuming
$U_{j+1} = U_{j+1}(C)$ is defined and $j\ge 1$,
let $U_{j} = U_j(C)$ be the uniserial $\Lambda$-module
containing $U_{j+1}$ and such that $U_{j}/U_{j+1}$  is isomorphic to the simple module
associated to $v_{\ell}$, where $j=kn +\ell$ with $1\le \ell< n$ and $0 \leq k < \mu(\alpha)$.
  Note that $U _ {j}$ has $K$-basis
$\{(\overline{C^ka_1a_2\dots a_{\ell}}), (\overline{C^ka_1a_2\dots a_{\ell+1})},\dots,(\overline{
C^{\mu(\alpha) }})\}$.
Thus, we obtain a chain of uniserial  modules
\[ (0)\subset U_{{n\mu(\alpha)}}\subset U_{(n\mu(\alpha))-1}\subset \cdots \subset U_{2} \subset U_{1}. \]

If $V$ is a $2$-gon with vertices $\alpha$ and $\beta$, with $\beta$ truncated (and hence $\alpha$
is nontruncated),  then the indecomposable projective $\Lambda$-module $P_V$ is uniserial \sloppy with $K$-basis
$\{e_v, \overline{a_1},\overline{a_1a_2},\dots,\overline{(a_1a_2\dots a_{{n}})^{\mu(\alpha)}} \}$, where $e_v$ is the primitive idempotent in $\Lambda$ at vertex $v$,
 and we obtain a chain of uniserial  modules
\[ (0)\subset U_{{n\mu(\alpha)}}\subset U_{{(n\mu(\alpha))}-1}\subset \cdots \subset U_{2}\subset U_{1}\subset P_V. \]

\begin{lemma}\label{prop-unis} Let $\Lambda$ be a Brauer configuration algebra with connected Brauer configuration $\Gamma$.
Let $U$ be a non-projective uniserial $\Lambda$-module.  With the notation above we have the following.
\begin{enumerate}
\item
There is a
polygon $V$ in  $\Gamma$, a  nontruncated vertex $\alpha$ in $V$,
and a special $\alpha$-cycle $C$ at $v$ such that $U$ is isomorphic
to $U_j(C)$ for some $1 \le j \le \val(\alpha)\mu(\alpha)$.   Furthermore, if $U$ is not a simple $\Lambda$-module
then $V,\alpha,$ and $j,$  are unique and $C$ is unique up to cyclic permutation.

\item
Suppose that $U$ is a non-zero, and non-simple uniserial module isomorphic to
$U_j(C)$  obtained from a special $\alpha$-cycle $C$
 and let $U'$ be a non-projective uniserial $\Lambda$-module isomorphic to
$U'_{j'}(C')$ obtained from a special $\beta$-cycle $C'$ at $v'$, with $\alpha\ne \beta$.
 Then
$\dim_{K}\Hom_{\Lambda}(U,U')\le 1$.
\end{enumerate}
\end{lemma}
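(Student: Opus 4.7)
The plan is to prove the two parts separately, using the rigid multiplicative structure encoded in (F$'$1)--(F$'$5), Proposition~\ref{prop-mult}, and Proposition~\ref{prop-cycle}.

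For part (1), I would split into two cases. If $U$ is simple, say $U\cong S_V$, then axiom C3 guarantees a nontruncated vertex $\alpha\in V$, and any special $\alpha$-cycle $C$ at $v$ realises $U$ as $U_{\val(\alpha)\mu(\alpha)}(C)$; the nonuniqueness of $\alpha$, $C$, and $j$ in this case is exactly what the statement permits. If $U$ is non-simple, let $x$ be a generator of $U$ lying above $\rad U$, and let $S_V$ be its top. Since $U$ is uniserial, $\rad U/\rad^2 U$ is simple, so exactly one arrow $a$ leaving $v$ in $\cQ$ acts non-trivially on $x$. By (F$'$1) and (F$'$3), this $a$ determines a unique nontruncated vertex $\alpha\in V$ and a unique special $\alpha$-cycle $C$ (up to cyclic permutation) having $a$ as its first arrow. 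Iterating Proposition~\ref{prop-cycle} down the composition series of $U$ shows that $U$ is forced to traverse the arrows of $C$ in order, so $U\cong U_j(C)$ where $j$ is determined by the length of $U$. Uniqueness of $V$ (as the polygon of the top), of $\alpha$ (by (F$'$1)), of $C$ up to cyclic permutation (by (F$'$2)), and of $j$ (by the length) then follow.

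For part (2), the core claim is that every $\phi\colon U\to U'$ factors through $U/\rad U\cong S_V$. To establish this, fix a generator $x$ of $U$; writing $U\cong U_j(C)$ via part (1), the first arrow $a$ of $C$ has the property that $xa$ is nonzero and generates $\rad U$ inside $U$. Now $a$ is an arrow of the special $\alpha$-cycle $C$, and by (F$'$4) any arrow appearing in two special cycles forces those cycles to be special $\gamma$-cycles for the same $\gamma$; since $\alpha\ne\beta$, this means $a$ does not appear in $C'$. Every element of $U'$ is a $K$-linear combination of basis elements of the form $\overline{(C')^{k'}b_1\cdots b_{p}}$, and by Proposition~\ref{prop-mult}(3) right multiplication by $a$ annihilates each such element, because $a$ is not the arrow immediately following any of them along $C'$. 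Hence $\phi(xa)=\phi(x)\cdot a=0$, so $\phi$ kills the generator $xa$ of $\rad U$, and thus $\phi(\rad U)=0$.

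Once this factorisation is in hand the bound is immediate: $\dim_K\Hom_\Lambda(U,U')\le\dim_K\Hom_\Lambda(S_V,U')\le 1$, where the last inequality uses that $U'$ is uniserial and so has simple socle, leaving at most a one-dimensional space of maps from any given simple into $U'$. The subtlest step is the clean identification in part (1) of the unique active arrow $a$ leaving $v$ on the generator $x$, since this must be carried out compatibly with the explicit indexing of the modules $U_j(C)$ introduced in the paragraph preceding the lemma; once part (1) is settled, part (2) is a short consequence of the disjointness of special cycles attached to distinct nontruncated vertices.
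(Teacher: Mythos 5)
Your part (2) is essentially the paper's argument: both reduce, via (F$'$1) and (F$'$4), to the fact that special cycles attached to distinct nontruncated vertices share no arrows, so any homomorphism $U\to U'$ kills $\rad U$ (equivalently, has image in $\soc U'$), and $\dim_K\Hom_\Lambda(S,U')\le 1$ for a simple $S$ because the uniserial $U'$ has simple socle. Your part (1), however, runs in the opposite direction from the paper's. You work from the top of $U$ downwards: pick a generator $x$, isolate the arrow carrying $x$ into $\rad U$, and follow the special cycle forward. The paper works from the socle upwards: since $\Lambda$ is symmetric, $P_V$ is the injective envelope of $S_V\cong\soc U$, so $U$ embeds as an honest submodule of $P_V$, whose submodule lattice is explicitly described by the chains $U_j(C)$; Propositions~\ref{prop-mult} and~\ref{prop-cycle} then force a non-projective uniserial submodule to be one of the $U_j(C)$.

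The socle-up route is not merely stylistic; it sidesteps two soft spots in your version. First, ``$\rad U/\rad^2U$ is simple, so exactly one arrow $a$ leaving $v$ acts non-trivially on $x$'' does not quite follow: simplicity of $\rad U/\rad^2U$ only guarantees a unique arrow $a_0$ with $xa_0\notin\rad^2U$; a priori another arrow $a$ could satisfy $xa\ne 0$ with $xa\in\rad^2U$, and excluding this already requires the explicit structure of $P_W$. Second, ``iterating Proposition~\ref{prop-cycle} down the composition series'' conflates nonvanishing of products of arrows in $\Lambda$ with nonvanishing of their action on $U$: that $\overline{ab}\ne 0$ in $\Lambda$ does not by itself give $xab\ne 0$ in an abstract module $U$, since the annihilator of $x$ need not be monomial. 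Both points are repairable (for instance by analysing which quotients of the projective cover $P_W$ are uniserial), but the repair is essentially the injective-envelope argument in dual form, so the paper's version is the cleaner one to adopt. Finally, a small indexing slip: in the paper's labelling the arrow carrying the generator of $U_j(C)$ into its radical is the $(\ell+1)$-st arrow of $C$ where $j=kn+\ell$, not the first arrow of the cycle; this is harmless for your part (2), which only uses that this arrow lies on $C$ and hence on no special $\beta$-cycle.
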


\begin{proof}
Let $U$ be a uniserial $\Lambda$-module.  If $U$ is a simple
$\Lambda$-module then (1) holds.  Assume that $U$
is a nonsimple uniserial $\Lambda$-module.  Let the socle of $U$
be isomorphic to a simple module $S_V$ for some $V\in\Gamma_1	$.
Since $\Gamma$ is reduced, $V$ is either a $2$-gon with one truncated vertex
or $V$ is a $d$-gon, $d\ge 2$, and all the vertices of $V$ are nontruncated.
We have that $U$ maps monomorphically into $P_V$ since $P_V$ is an
indecomposable injective module.  Applying Proposition \ref{prop-mult} and
Propositiom \ref{prop-cycle}, we see that $U$ is isomorphic to $U_j(C)$, for
some special $\alpha$-cycle at $v$ and some $j$, $1 \le j\le \val(\alpha)\mu(\alpha)$.  The last
part of (1) again follows from  Proposition \ref{prop-mult} and
Proposition \ref{prop-cycle}

Part (2) follows from  (F$'$4) since, if $\alpha\ne \beta$ then $C$ and $C'$ can
have no arrows in common.  In particular, the only possible
map would be from $U$ to the socle of $U'$.
\end{proof}

\begin{prop}  Let $\Lambda$ be a Brauer configuration algebra associated
to a connected  Brauer configuraton $\Gamma$.
Let $U$ be a uniserial $\Lambda$-module.
With the notation above we have that
$U$ is projective (uniserial) if and only if
$U$ is isomorphic to the indecomposable projective
$\Lambda$-module associated to a $2$-gon $V$ having a
truncated vertex.
\end{prop}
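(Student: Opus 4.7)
The plan is to prove both implications separately. The forward direction --- if $V$ is a 2-gon with a truncated vertex, then $P_V$ is uniserial --- has already been established in the discussion immediately preceding the statement, where an explicit totally ordered $K$-basis of $P_V$ of the form $\{e_v,\overline{a_1},\overline{a_1a_2},\dots,\overline{C^{\mu(\alpha)}}\}$ was written down. So I only need to handle the converse.

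For the converse, suppose $U$ is a projective uniserial $\Lambda$-module, so $U\cong P_V$ for some $V\in\Gamma_1$. Since $P_V$ is uniserial, $\dim_K \rad(P_V)/\rad^2(P_V)\le 1$, which means that at most one arrow of $\cQ_\Gamma$ starts at the vertex $v$ associated to $V$. My next step would be to count these arrows directly from the construction of $\cQ_\Gamma$: each nontruncated vertex $\alpha$ of $V$ contributes exactly $\oc(\alpha,V)$ distinct arrows out of $v$, one per occurrence of $V$ in the successor sequence at $\alpha$. Summing over the nontruncated vertices of $V$, the total number of arrows starting at $v$ equals
\[
\sum_{\substack{\alpha\in V\\ \alpha\text{ nontruncated}}} \oc(\alpha,V).
\]
By condition C3 this sum is at least $1$, and by the uniseriality bound it is at most $1$, so it equals exactly $1$. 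Therefore $V$ contains a unique nontruncated vertex $\alpha$, occurring with multiplicity $1$.

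To conclude, I would invoke condition C2: $V$ has at least two vertices, so there exists $\beta\in V$ with $\beta\ne\alpha$, and this $\beta$ must be truncated. Since $\Gamma$ is reduced, the characterization at the end of Section~\ref{subsec-trunc and red} forces any polygon containing a truncated vertex to be a 2-gon with exactly one truncated vertex. Hence $V=\{\alpha,\beta\}$ is the desired 2-gon with $\beta$ truncated.

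The one step needing care is the arrow-count in the second paragraph: I must verify that distinct occurrences of $V$ in successor sequences --- either at the same nontruncated vertex or at different nontruncated vertices --- yield pairwise distinct arrows of $\cQ_\Gamma$. This follows from (F$'$1) and (F$'$3), which show that each arrow of $\cQ_\Gamma$ belongs to a unique special cycle (up to cyclic permutation) and is determined there by the choice of nontruncated vertex and its position in the cycle, so no double-counting can occur. Everything else is routine bookkeeping with the definitions.
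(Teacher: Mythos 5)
Your proof is correct and rests on the same underlying fact as the paper's: each occurrence of a nontruncated vertex in $V$ produces its own special cycle at $v$, hence its own arrow out of $v$. The paper phrases the converse contrapositively (if $V$ is not a $2$-gon with a truncated vertex then, by reducedness, all $d\ge 2$ of its vertices are nontruncated and the $d$ special cycles give distinct uniserial submodules $U_1(C_i)$ sharing the socle, so $P_V$ is not uniserial), while you argue directly via $\dim_K\rad(P_V)/\rad^2(P_V)\le 1$ and conditions C2, C3; the two formulations are interchangeable.
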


\begin{proof}
If $V$ is a $2$-gon with vertices $\alpha$ and $\beta$ and $\beta$
is truncated, then there is only one special $\alpha$-cycle at $v$ and by
the discussion at the beginning of this section, $P_V$ is a uniserial
projective module.  Conversely, suppose that  $V$ is a $d$-gon, $d\ge 2$, such that
each vertex in $V$ is nontruncated.  Let $V=\{\alpha_1,\dots, \alpha_d\}$ and, for
$1\le i\le d$, let $C_i$ be a special $\alpha_i$-cycle at $V$. These cycles yield distinct
uniserial submodules $U_1(C_i)$, all having the same
socle. Hence $P_V$ is
not a uniserial module.  The result follows.
\end{proof}

\begin{Example}{\rm
Let $\Lambda$ be the \bca\ associated to the \bc\ of our first example (after reducing). If $X$ is a set of
elements in $\Lambda$, let $\langle X\rangle$ denote the right submodule of  $\Lambda$ generated by $X$. Then, the  polygon
$V_1=\{1,2,3, 4\}$, yields the following sequences of uniserial submodules:
\begin{gather*}\langle \overline{a_1a_2a_3}\rangle\subset \langle \overline{a_1a_2}\rangle\subset
\langle \overline{a_1}\rangle, \\
 \langle \overline{b_1b_2}\rangle\subset \langle \overline{b_1}\rangle,\\
\langle (\overline{c_1c_2})^3\rangle\subset \langle (\overline{c_1c_2})^2\overline{c_1}\rangle\subset
\cdots\subset
\langle \overline{c_1}\rangle, \\
\langle \overline{d_1d_2d_3d_4}\rangle\subset \langle \overline{d_1d_2d_3}\rangle\subset
\cdots \subset
\langle \overline{d_1}\rangle.
\end{gather*} Note that vertex 5 in $V_3$ is truncated and hence we get a uniserial projective
$\Lambda$-module generated by the idempotent $e_{v_3}$.  The sequence of uniserial modules
in $e_{v_3}\Lambda$ is
\begin{gather*}\langle \overline{d_2d_3d_4d_1}\rangle\subset \langle \overline{d_2d_3d_4}\rangle\subset
\cdots
\langle \overline{d_2}\rangle \subset \langle e_{v_3}\rangle.
\end{gather*}Since the \bc\ of our second example (after reducing) has no truncated vertices,
the associated \bca\ has no  uniserial projective modules.
}\end{Example}

Next, we describe the indecomposable projective modules over a Brauer configuration algebra.

\begin{thm}\label{prop-proj}
 Let $\Lambda$ be  a  \bca\ associated to a reduced \bc\ $\Gamma$.
Let $P$ be an indecomposable projective $\Lambda$-module associated to
a $d$-gon $V$.  Define an integer $r$ by setting $r=d$ if all vertices of  $V$ are nontruncated and $r=1$
if  $V$ is a  $2$-gon with one truncated vertex.  Then $rad(P)$ is the sum of the $r$ uniserial $\Lambda$-modules
$\sum_C U_1(C)$ where $C$  runs over the  special 
$\alpha$-cycles at $v$
for every nontruncated vertex  $\alpha$ of $V$.  If $r>1$, then $U_i\cap U_j$ is the simple socle of $P$.    Furthermore, the heart of $P$,  $\rad(P)/ \soc(P)$ is a direct sum of uniserial
$\Lambda$-modules.

\end{thm}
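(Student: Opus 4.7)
The approach is to decompose $\rad(P)$ via the special cycles at $v$, using facts (F$'$1)--(F$'$4) together with the explicit $K$-basis of $\Lambda$ supplied by Proposition~\ref{prop-basis}.

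First I would identify the arrows of $\cQ$ starting at $v$ with occurrences of nontruncated vertices of $V$. By the construction of $\cQ_\Gamma$, each occurrence of a nontruncated vertex $\alpha$ in $V$ contributes exactly one arrow from $v$ to the vertex associated to the successor of $V$ at $\alpha$. If all vertices of $V$ are nontruncated this produces $d=r$ arrows; if $V$ is a $2$-gon with one truncated vertex it produces the single arrow $r=1$. By (F$'$3) each such arrow $a$ is the first arrow of a unique special cycle $C_a$ at $v$, and the assignment $a\mapsto C_a$ is a bijection between arrows starting at $v$ and the $r$ special cycles appearing in the statement. Note that in the self-folded case $\oc(\alpha,V)>1$, the several special $\alpha$-cycles at $v$ are distinct cyclic permutations of one another, one per occurrence.

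Next, since $P\cong e_v\Lambda$, the radical $\rad(P)$ is generated as a right $\Lambda$-module by the classes $\bar a$ for $a$ starting at $v$. For each such $a$, the cyclic submodule $\bar a\Lambda$ equals the uniserial module $U_1(C_a)$ from Section~\ref{subsec-mult}: by Lemma~\ref{lem-path} a non-zero path beginning with $a$ must be a prefix of $C_a^{\mu(\alpha)}$, and Proposition~\ref{prop-basis} provides the matching $K$-basis. Hence $\rad(P)=\sum_C U_1(C)$. To identify $U_1(C)\cap U_1(C')$ for distinct $C,C'$, observe that the proper-prefix basis vectors of two different special cycles at $v$ begin with different arrows and are therefore $K$-linearly independent in $\Lambda$ by Proposition~\ref{prop-basis}; the only elements lying in both submodules must then be scalar multiples of the common socle class $\overline{C^{\mu(\alpha)}}=\overline{(C')^{\mu(\beta)}}$, where the equality is forced by the type one relations. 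Since $\Lambda$ is symmetric (Proposition~\ref{prop-fd}), $P$ is injective indecomposable with simple socle, so this common class spans $\soc(P)$ and $U_1(C)\cap U_1(C')=\soc(P)$ is simple whenever $r>1$.

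The statement about the heart is then a formal consequence: we obtain
\[
\rad(P)/\soc(P)=\sum_C U_1(C)/\soc(P),
\]
each summand $U_1(C)/\soc(P)$ is uniserial as a quotient of the uniserial module $U_1(C)$, and the pairwise intersections vanish after passing to the quotient by $\soc(P)$, yielding the direct sum decomposition $\rad(P)/\soc(P)=\bigoplus_C U_1(C)/\soc(P)$. The main technical obstacle is the bookkeeping in the second paragraph: correctly matching arrows out of $v$ with the special cycles at $v$ in the presence of self-foldings (where several cyclic permutations of the same special $\alpha$-cycle appear at $v$), and deducing the precise intersection $U_1(C)\cap U_1(C')=\soc(P)$ from the explicit basis of Proposition~\ref{prop-basis} rather than merely that the intersection is contained in the socle.
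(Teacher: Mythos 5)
Your proposal is correct and follows essentially the same route as the paper: identify the arrows out of $v$ with the special $\alpha$-cycles at $v$, show each such arrow generates the uniserial module $U_1(C)$, compute the pairwise intersections as $\soc(P)$ via the relations and the basis of Proposition~\ref{prop-basis}, and pass to the quotient by the socle for the heart. The only real difference is at the last step: the paper exhibits an explicit short exact sequence $0\to\oplus_{i=1}^{r-1}S_V\to\oplus_{i=1}^{r}U_1(C_i)\to\rad(P)\to 0$ and factors out the socles to get the direct sum, whereas you deduce directness from the disjointness of the basis vectors of Proposition~\ref{prop-basis} --- which is fine, but be aware that the phrase ``pairwise intersections vanish, yielding the direct sum'' alone would not suffice for $r\ge 3$; it is the disjoint-basis argument you already set up that actually gives independence of all the summands simultaneously.
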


\begin{proof} Let \[\cC_v=\{C\mid C\text{ is a special }\alpha\text{-cycle at }v,  \alpha \text{ a
nontruncated vertex in } V \}=\{C_1,\dots, C_r\}.\]
 For each $C_i\in\cC_v$, the uniserial $\Lambda$-module $U_1(C_i)$ is  generated by the first arrow, $a_i$, in $C _i$.
 Let $M$ denote the submodule of $P$ generated by
$a_1,\dots, a_{r}$.  Note that $a_1,\dots, a_{r}$ are precisely the arrows in
the quiver of $\Lambda$ that start at  vertex $v$ (where $v$ corresponds to $V$) and that
 $P = e_v \Lambda$, where $e_v$ is the primitive idempotent in $\Lambda$ at $v$.
  Then it follows that $M=\rad(P)$ and that $M=\sum_{i=1}^rU_1(C_i)$.

For $i=1,\dots r$, we have seen that $\ov{C_i^{\mu(\alpha)}}$ is a nonzero
element in the socle of  $P$ and, considering the relations of type 1
they are all equal.  Since the type 2 and type 3 relations are monomial relations,
it follows that, if $i\ne j$, then $U_1(C_i)\cap U_1(C_j)$ is the socle of $P$ which is a simple
$\Lambda$-module.

Again from the relations of types one, two and three, we see that we have a short exact sequence of
$\Lambda$-modules 
\[
0\to \oplus_{i=1}^{r-1}S_v\stackrel{f}{\to} \oplus_{i=1}^{r}U_1(C_i)\to M\to 0,\] where
$f(s_1,\dots,s_{r-1})=(s_1,s_2-s_1,s_3-s_2,\dots,s_{r-1}-s_{r-2},-s_{r-1})$.  Factoring out
the socles, we obtain an   isomorphism.
\[  \oplus_{i=1}^{r}U_1(C_i)/S_v\stackrel{\cong}{\to} M/\soc(P).\]
Noting that   $M/ \soc(P)$  is the heart of  $ P$,
the proof is complete.
\end{proof}

\begin{Example}\rm{
Let $\Lambda$ be the \bca\ corresponding to our first example and let $P$ be the indecomposable
projective $\Lambda$-module associated to the vertex $1$ in the quiver of $\Lambda$  which in turn corresponds to the polygon $V_1$ of the associated Brauer configuration.
We give a schematic of $P$.
\vskip -.8in
\sloppy \hskip-.8in \includegraphics[scale=.8]{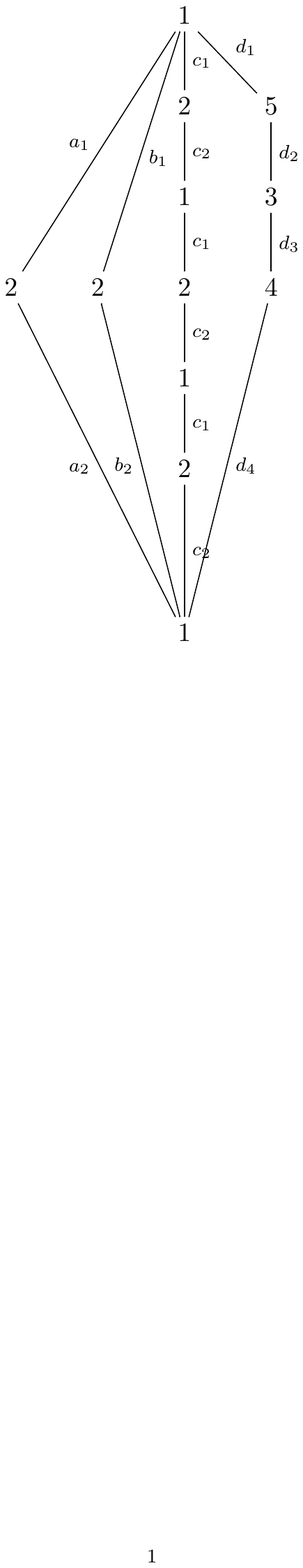}
}
\end{Example}
\vskip -4in
 In \cite{GreenSchroll}  the notion of a multiserial algebra is defined. Multiserial algebras are  a direct generalization of biserial algebras.  Namely, a $K$-algebra $A$ is 
\emph{multiserial} if the Jacobson radical of $A$  e- as a left and right $A$-module is a direct sum of uniserial modules, the
intersection of any two uniserial modules is either 0 or a simple $A$-module.
It follows
from Theorem~\ref{prop-proj} that a Brauer configuration algebra is multiserial and we also obtain the number of uniserial summands 
in the heart of each projective indecomposable:

\begin{cor}
Let $\Lambda$ be a \bca \ with \bc\ $\Gamma$.
 \begin{itemize}
 \item[(1)]  $\Lambda$ is a multiserial algebra.
 \item[(2)]  The number of summands in the heart of a projective indecomposable $\Lambda$-module $P$ such that $\rad^2(P)\ne 0$ equals the number of non-truncated vertices
 of the  polygon in $\Gamma$ corresponding to $P$  counting repetitions.
 \end{itemize}
\end{cor}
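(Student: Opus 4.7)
Both claims should follow almost immediately from Theorem~\ref{prop-proj} together with the symmetry of $\Lambda$ (Proposition~\ref{prop-fd}), so the plan is to carefully verify the translation rather than develop any new machinery.

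For part (1), I would simply quote Theorem~\ref{prop-proj} applied to an arbitrary indecomposable projective right $\Lambda$-module $P$: it expresses $\rad(P)$ as a sum of uniserial submodules $U_1(C)$, one for each special cycle $C$ at the quiver vertex corresponding to $P$, with the intersection of any two being the simple socle of $P$. This is exactly the multiserial condition for right projectives. Symmetry of $\Lambda$ then transfers the same description to left projectives (via the $(\Lambda,\Lambda)$-bimodule isomorphism $\Lambda\cong D(\Lambda)$), so $\Lambda$ is multiserial.

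For part (2), I plan to read off the number of summands directly from the isomorphism
\[
\rad(P)/\soc(P) \;\cong\; \bigoplus_{i=1}^{r} U_1(C_i)/S_v
\]
established in Theorem~\ref{prop-proj}. There, $r$ is defined as $d$ when all vertices of the $d$-gon $V$ corresponding to $P$ are nontruncated, and as $1$ when $V$ is a $2$-gon with one truncated vertex; in either case this coincides with the count of nontruncated vertices of $V$ with repetitions, because distinct occurrences of a single nontruncated vertex $\alpha$ in the multiset $V$ give rise to distinct (cyclically equivalent) special $\alpha$-cycles, all of which appear in $\cC_v$.

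The step that needs explicit verification is that each summand $U_1(C_i)/S_v$ is actually a nonzero indecomposable uniserial module, so that the count really is $r$ and not less: this reduces to the observation that $U_1(C_i)$ has length $\val(\alpha_i)\mu(\alpha_i) \ge 2$ since $\alpha_i$ is nontruncated by construction, so the quotient by its simple socle is a nonzero uniserial module (uniseriality of $U_1(C_i)$ descends to the quotient, forcing indecomposability). The hypothesis $\rad^2(P) \ne 0$ is what rules out the degenerate situation where the heart vanishes; under axioms C1--C3 this is in fact automatic for every indecomposable projective, so there is no serious obstacle. The main bookkeeping care is to match the definition of $r$ in Theorem~\ref{prop-proj} with ``nontruncated vertices counted with repetitions'', which is the only nontrivial point.
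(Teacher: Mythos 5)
Your proposal is correct and follows essentially the same route as the paper, which derives the corollary directly from Theorem~\ref{prop-proj} (using symmetry to pass from right to left projectives, as the introduction notes for Corollary C) and reads off the summand count from the decomposition of $\rad(P)/\soc(P)$. Your additional checks --- that $r$ equals the number of nontruncated vertices counted with repetitions, and that condition C3 makes $\rad^2(P)\ne 0$ automatic --- are accurate bookkeeping that the paper leaves implicit.
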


The next result uses Theorem \ref{prop-proj} and shows that the dimension of a \bca\  can easily be
computed from its \bc.

\begin{prop}\label{prop-dim-result}  Let $\Lambda$ be a \bca\ associated to the
\bc\ $\Gamma$
 and let $\cC = \{  C_1, \dots, C_t\}$ be a full set of
equivalence class representatives of special cycles. Assume that,
for $i=1,\dots,t$, $C_i$ is a special 
$\alpha_i$-cycle where $\alpha_i$ is a
nontruncated vertex in $\Gamma$.  Then  
\[ \dim_K \Lambda= 2{|Q_0|} + \sum_{C_i \in \cC}|C_i| (n_i|C_i|-1 ),
\]
where $|Q_0|$ denotes the number of vertices of $Q$,  $|C_i|$ denotes 
the number of arrows in the $\alpha_i$-cycle $C_i$ and $n_i =  \mu(\alpha_i)$.
\end{prop}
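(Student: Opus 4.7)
The plan is to count directly the explicit $K$-basis of $\Lambda$ furnished by Proposition \ref{prop-basis}. I would partition that basis into three natural classes: (i) the length-zero proper prefixes, namely the primitive idempotents $e_v$, one for each $v \in Q_0$; (ii) the positive-length proper prefixes of $C^{\mu(\alpha)}$ as $C$ ranges over all special $\alpha$-cycles; and (iii) the socle representatives $\overline{C_V^{\mu(\alpha)}}$, one for each polygon $V \in \Gamma_1$. Since $|\Gamma_1| = |Q_0|$, classes (i) and (iii) together contribute exactly $2|Q_0|$ to the dimension, so the content of the theorem is the evaluation of class (ii).

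To count (ii), organize the special cycles into their equivalence classes under cyclic permutation, with representatives $C_1,\ldots,C_t$ as in the statement. By (F$'$3) the arrows in any special cycle are pairwise distinct, hence $C_i$ is primitive and its equivalence class consists of exactly $|C_i|$ cyclic permutations. For a fixed cyclic permutation $D = b_1 \cdots b_{|C_i|}$ of $C_i$, the positive-length proper prefixes of $D^{n_i}$ are $b_1,\ b_1 b_2,\ \ldots,\ b_1 b_2 \cdots b_{n_i|C_i|-1}$, giving $n_i|C_i| - 1$ prefixes. Prefixes coming from different cyclic permutations of the same $C_i$ have different first arrows (by primitivity), so they are distinct; the class of $C_i$ therefore contributes exactly $|C_i|(n_i|C_i| - 1)$ basis elements to (ii).

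Next I would verify the disjointness between distinct equivalence classes: by (F$'$1), the first arrow of a positive-length prefix determines the unique equivalence class of special cycles in which it occurs, so no prefix is counted twice. Summing over the $t$ classes and adding the $2|Q_0|$ contribution from (i) and (iii) gives
\[
\dim_K \Lambda \;=\; 2|Q_0| \;+\; \sum_{C_i \in \cC} |C_i|\bigl(n_i|C_i| - 1\bigr),
\]
which is the claimed formula. The main obstacle is the disjointness bookkeeping — both across cyclic permutations within a single class (handled by the primitivity consequence of (F$'$3)) and across different equivalence classes (handled by (F$'$1) and (F$'$4)). As a sanity check I would test the formula on the extreme cases of a $2$-gon with one truncated vertex (so $t=1$ and $\Lambda$ is Nakayama) and of a loop at a single vertex with $\mu(\alpha) > 1$, where both sides reduce to straightforward counts.
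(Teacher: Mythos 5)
Your proposal is correct, but it takes a somewhat different route from the paper's. You count the explicit $K$-basis of Proposition~\ref{prop-basis} directly, splitting it into the idempotents, the positive-length proper prefixes of the $C^{\mu(\alpha)}$, and the socle representatives; the two copies of $|Q_0|$ come from the first and third classes, and the cyclic-permutation bookkeeping via (F$'$1) and (F$'$3) produces the factor $|C_i|$ in front of $n_i|C_i|-1$. The paper instead filters by the radical: it writes $\dim_K\Lambda=\dim_K(\Lambda/\rad\Lambda)+\dim_K\soc(\rad\Lambda)+\dim_K(\rad\Lambda/\soc(\rad\Lambda))$, identifies the first two terms with $|Q_0|$ each, and then invokes Theorem~\ref{prop-proj} to decompose $\rad\Lambda/\soc\Lambda$ as $\oplus_{a\in Q_1}\, a\Lambda/\soc(a\Lambda)$, each summand being uniserial of dimension $\mu(\alpha)|C|-1$ where $C$ is the unique special cycle with first arrow $a$; the factor $|C_i|$ then appears as the number of arrows (equivalently, of cyclic permutations) in the class of $C_i$. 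The combinatorial core --- one contribution of $n_i|C_i|-1$ for each of the $|C_i|$ cyclic permutations of $C_i$ --- is identical in both arguments; your version trades the module-theoretic input of Theorem~\ref{prop-proj} for the basis description, which makes the disjointness checks you flag the only real work. One small point worth making explicit in your class (i): every vertex of $Q$ is the source of at least one special cycle (each polygon has a nontruncated vertex because $\Gamma$ is reduced and satisfies C3), so each $e_v$ genuinely occurs among the length-zero prefixes, and exactly once as an element of the set.
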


\begin{proof}
Since $\dim_K \Lambda=\dim_K(\Lambda/\rad(\Lambda))+\dim_K\rad(\Lambda)$ and
$\dim_K(\Lambda/\rad(\Lambda))=|Q_0|$ we must show that
$\dim_K\rad(\Lambda)=|Q_0| + \sum_{i=1}|C_i||C_i-1|^{\mu(\alpha_i)}$.
Now $\dim \rad(\Lambda)=\dim_K\soc(\rad(\Lambda))+\dim_K(\rad(\Lambda)/\soc(\rad(\Lambda)))=
|Q_0|+\dim_K (\rad(\Lambda)/\soc(\rad(\Lambda)))$.  We must show that $\rad(A)/\soc(\rad(A))= \sum_{i=1}|C_i||C_i-1|^{\mu(\alpha_i)}$ .
Using Theorem \ref{prop-proj}, we see that $\rad(\Lambda)/\soc(\Lambda)$ is isomorphic to the direct sum of the uniserial
modules $a \Lambda /\soc(a \Lambda )$, that is,
$\rad(\Lambda)/\soc(\Lambda) \simeq \oplus_{a\in Q_1}(a \Lambda /\soc(a \Lambda )$.  But if $C=a_1\cdots a_s$ is the special
$\alpha$-cycle with first arrow $a = a_1$ associated 
to the uniserial module $a \Lambda$, then $\dim_K(a \Lambda)=\mu(\alpha) s$.  	Hence,
$\dim_K(a \Lambda /\soc(a \Lambda ))=\mu(\alpha)|C|-1^{}$.  Noting there are
$|C|$ special cycles equivalent to $C$, the result now follows.   
\end{proof}   

\section{Radical cubed zero Brauer configuration algebras}\label{sec-r30}
In this section we classify the radical cubed zero \bca s such that
the associated \bc\ has no self-foldings; that is, each polygon in the \bc\ contains
no repeated vertices.
More precisely, we associate
\bca s to finite graphs  and compare them to the symmetric
radical cubed zero algebras associated to the same graphs. 
In particular, given a symmetric radical cubed zero algebra associated to a graph, we provide a construction
of a \bc, such that the associated \bca\ is isomorphic to the given symmetric radical cubed zero algebra.

We begin with
a well-known one-to-one correspondence.
Fix a positive integer $n$ and consider  the set
of finite graphs $G=(G_0,G_1)$ where $G_0=\{1,2,\dots, n\}$ is the set of vertices of $G$ and $G_1$ is the set of edges of $G$.
 We further suppose that $G$ has no isolated vertices; in particular, the valency of each
vertex is at least equal to one. We allow  multiple
edges and loops.  We say two such graphs
$G=(G_0,G_1)$ and  $G'=( G_0,G'_1)$  are equivalent  if there
is a set isomorphism $\delta\colon G_1\to G'_1$ such that, for each
$e\in G_1$, the endpoints of $e$ and $\delta(e)$ are the same.
Effectively, two graphs are equivalent if they differ only in the
names of the edges.

 Let $\cG_n$ denote the set of equivalence classes
of finite graphs having $n$ vertices labelled $1$ to $n$ and such that there are no vertices
of valency zero.

 Let $\cM_n$ denote the set of symmetric $n\times n$ matrices with
entries in the nonnegative integers such that no row only has $0$ entries.  Equivalently,
no column has only $0$ entries.  Although the next result is well-known we include
a short proof for completeness.

\begin{lemma}\label{lem-equiv}
There is a one-to-one correspondence between $\cG_n$ and $\cM_n$.
\end{lemma}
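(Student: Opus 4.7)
The plan is to set up the bijection via the (multi)adjacency matrix, being careful about the convention for loops and about the equivalence relation on graphs.

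First I would define the forward map $\Phi\colon \cG_n\to\cM_n$. Given a representative $G=(G_0,G_1)\in\cG_n$, set $\Phi(G)=(m_{ij})$ where, for $i\neq j$, $m_{ij}$ is the number of edges in $G_1$ with endpoint set $\{i,j\}$, and $m_{ii}$ is the number of loops at vertex $i$. Since having an edge $e$ with endpoints $\{i,j\}$ is symmetric in $i$ and $j$, the matrix is symmetric. The ``no isolated vertex'' hypothesis says that for each $i\in G_0$ there is at least one edge incident to $i$, which forces some $m_{ij}>0$, so no row is zero; thus $\Phi(G)\in\cM_n$. Well-definedness on equivalence classes is immediate: if $\delta\colon G_1\to G_1'$ is an edge bijection preserving endpoints, then for every pair $\{i,j\}$ the number of edges with that endpoint set is the same in $G$ and $G'$, so $\Phi(G)=\Phi(G')$.

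Next I would construct the inverse $\Psi\colon \cM_n\to\cG_n$. Given $M=(m_{ij})\in\cM_n$, let $G_0=\{1,\dots,n\}$ and build $G_1$ by adding, for each pair $i<j$, exactly $m_{ij}$ distinct (labelled) edges with endpoints $\{i,j\}$, and for each $i$, exactly $m_{ii}$ distinct loops at $i$. The result is a finite graph with $n$ vertices; any two choices of labels for the new edges are equivalent in the sense of the excerpt, so $\Psi(M)$ is a well-defined element of $\cG_n$. The ``no zero row'' condition on $M$ guarantees that each vertex is incident to at least one edge, so $\Psi(M)$ has no isolated vertices.

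Finally I would verify that $\Phi$ and $\Psi$ are mutually inverse. For $M\in\cM_n$, directly from the construction $\Phi(\Psi(M))=M$, because the number of edges of $\Psi(M)$ between $i$ and $j$ (resp.\ loops at $i$) is by definition $m_{ij}$ (resp.\ $m_{ii}$). Conversely, given $[G]\in\cG_n$, applying $\Psi$ to $\Phi(G)$ produces a graph $G'$ that has, for every pair of vertices $\{i,j\}$, the same number of edges with that endpoint set as $G$ does. Choosing any bijection between the edges of $G$ and $G'$ that matches endpoint sets yields an equivalence $G\sim G'$, so $\Psi(\Phi([G]))=[G]$ in $\cG_n$.

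The proof is essentially a bookkeeping exercise, so there is no substantial obstacle; the only genuine subtlety is adopting and using consistently a convention for how loops contribute to the diagonal entries (here, one loop contributes $1$ to $m_{ii}$), and confirming that the equivalence relation on $\cG_n$ is exactly what is needed to erase the labels introduced in $\Psi$.
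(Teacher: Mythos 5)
Your proof is correct and follows essentially the same route as the paper: both send a graph to its multi-adjacency matrix (with loops counted once on the diagonal), construct the inverse by placing the prescribed number of edges between each pair of vertices, and match the no-isolated-vertex condition with the no-zero-row condition. Your version simply spells out the well-definedness on equivalence classes in more detail than the paper does.
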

\begin{proof}
If $G$ represents an equivalence class in $\cG_n$, define $E(G)$ to be the $n\times n$ matrix with
$(i,j)$-entry being the number of edges with endpoints the vertices $i$ and
$j$,  if
$1\le i\ne j\le n$, and  the $(i,i)$-entry of $E(G)$
is the number of loops at vertex $i$.

If $E=(e_{i,j})\in\cM_n$ let $G(E)$ be the equivalence class of a graph with vertex set $\{1,\dots,n\}$
and $e_{i,j}$ edges with endpoints $i$ and $j$.

We have $G(E(G))=G$ and $E(G(E))=E$.  Finally, $G$ having no isolated vertices corresponds
to $E$ having no zero row and no zero column and we are done.
\end{proof}
 Our goal is to
extend the one-to-one correspondence given in the above Lemma to include \bc s.  For this we need to introduce ordered \bc s.  But before we do so, we would like to motivate why this is necessary.
Consider the following two (unequal) symmetric matrices
$$\left(\begin{array}{cccc} 0&1&0&1\\1&0&1&0\\ 0&1&0&1\\1&0&1&0\end{array}\right)
\mbox{ and } \left(\begin{array}{cccc} 0&0&1&1\\0&0&1&1\\ 1&1&0&0\\1&1&0&0\end{array}\right).$$
The graphs associated to these matrices are both 4 cycles respectively given  by

$$\xymatrix{
1 \ar@{-}[d] \ar@{-}[r] & 2\ar@{-}[d]  &              &      1 \ar@{-}[d] \ar@{-}[r] & 3\ar@{-}[d]     \\
4 \ar@{-}[r] &3                         & \mbox{ and }  &
4 \ar@{-}[r] &2.} $$

As the vertices of the graph will correspond to the
polygons in a \bc, to obtain the desired one-to-one correspondence,  one needs to distinguish two \bc s that differ only in the labelling of the polygons.  This
is taken care of by `ordering' the polygons, which is formally defined below.

Before giving the definition of ordering, we have the following result.

\begin{lemma}
Let $\Lambda$ be a indecomposable \bca\ with  \bc\ $\Gamma$. Then $\rad^3(\Lambda)=0$ and
$\rad^2(\Lambda)\ne 0$ if and only if  $\val(\alpha)\mu(\alpha)=2$,
for each nontruncated vertex $\alpha\in\Gamma_0$.
\end{lemma}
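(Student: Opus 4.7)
The plan is to read off the radical filtration of $\Lambda = K\cQ/I$ from the length grading on nonzero paths, and then use Lemma~\ref{lem-path} together with (F$'$3) to pin down the maximal length of a nonzero path in combinatorial terms.

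Since $I$ is admissible by Proposition~\ref{prop-fd}, we have $\rad^k(\Lambda) = (J^k+I)/I$ where $J$ is the arrow ideal of $K\cQ$. Hence $\rad^k(\Lambda)$ is spanned by the classes $\bar p$ of paths $p$ of length at least $k$ with $\bar p\ne 0$. In particular, $\rad^3(\Lambda)=0$ is equivalent to: every path of length $\ge 3$ lies in $I$; and $\rad^2(\Lambda)\ne 0$ is equivalent to: some path of length $\ge 2$ is nonzero modulo $I$.

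Next, I would determine the maximum length of a nonzero path. By (F$'$3), any arrow $a$ of $\cQ$ is the first arrow of a (unique) special $\alpha$-cycle $C$ for a unique nontruncated vertex $\alpha\in\Gamma_0$. Therefore, for any path $p$ of length $\ge 1$, its first arrow determines such a pair $(\alpha,C)$, and Lemma~\ref{lem-path} gives $\bar p\ne 0$ if and only if $p$ is a prefix of $C^{\mu(\alpha)}$. Since $C^{\mu(\alpha)}$ has length $\val(\alpha)\mu(\alpha)$, the maximum length of a nonzero path in $\Lambda$ equals
\[
M \;=\; \max\bigl\{\val(\alpha)\mu(\alpha)\,:\, \alpha \in \Gamma_0 \text{ nontruncated}\bigr\}.
\]

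Combining these observations, $\rad^3(\Lambda)=0$ is equivalent to $M\le 2$, i.e.\ $\val(\alpha)\mu(\alpha)\le 2$ for every nontruncated $\alpha$. Since nontruncated means $\val(\alpha)\mu(\alpha)>1$, this is the same as $\val(\alpha)\mu(\alpha)=2$ for every nontruncated $\alpha$. Under this condition, for any such $\alpha$ the element $\overline{C^{\mu(\alpha)}}$ is a nonzero length-$2$ path, so $\rad^2(\Lambda)\ne 0$; conversely, $\rad^2(\Lambda)\ne 0$ forces $M\ge 2$, so some nontruncated vertex exists (which is in any case guaranteed by condition C3 whenever $\Gamma_1\ne\emptyset$). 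The only delicate step is the appeal to (F$'$3), which ensures that Lemma~\ref{lem-path} applies to \emph{every} nonzero path and not only to those starting at the distinguished first arrow of one preferred special cycle; no serious obstacle arises beyond this.
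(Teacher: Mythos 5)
Your argument is correct and follows essentially the same route as the paper's own proof, which likewise rests on the fact that every nonzero path is a prefix of some $C^{\mu(\alpha)}$ of length $\val(\alpha)\mu(\alpha)$ (the paper phrases the converse direction as ``if every $C^{\mu(\alpha)}$ has length $2$ then every path of length $3$ lies in $I$''). Your version is slightly more explicit in invoking Lemma~\ref{lem-path} and (F$'$3) and in noting that $\rad^2(\Lambda)\ne 0$ is automatic, but the substance is identical.
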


\begin{proof}
If $\alpha$ is a nontruncated vertex, then $\val(\alpha)\mu(\alpha)\ge 2$.  If, for some $\alpha\in\Gamma_0$,
$\val(\alpha)\mu(\alpha)\ge 3$ and  $C$ is a special $\alpha$-cycle, then
$C^{\mu(\alpha)}$ has length $\val(\alpha)\mu(\alpha)$ and is a nonzero
element in $\soc(\Lambda)$.  Thus $\rad^3(\Lambda)\ne 0$.  Conversely, if $\rad^3(\Lambda)\ne 0$ then there must be some vertex $\alpha$ and
a special $\alpha$-cycle $C$ such that the 
length of $C^{\mu(\alpha)}$ which equals $\val(\alpha)\mu(\alpha)$ is $\ge 3$.
To see this, if every $C^{\mu(\alpha)}$ is of length $2$
then every path of length 3 must be in the ideal of relations.  In particular,
$\rad^3(\Lambda)=0$, a contradiction. 
\end{proof}


  Let $\Gamma=	(\Gamma_0,\Gamma_1,\mu,\mathfrak o)$ be a reduced \bc \ such that
$|\Gamma_1| = n$. That is, there are
exactly $n$ polygons in $\Gamma_1$, and suppose further that
 $1\le\val(\alpha)\mu(\alpha)\le 2$, for all  $\alpha\in\Gamma_0$.
Recall that by our assumptions on \bc s and the definition of reduced,  if $\val(\alpha)\mu(\alpha)=1$, then
$\alpha$ is a truncated vertex in a  $2$-gon and the other vertex is not truncated.  

 We say $\Gamma$ is \emph{ordered by $f$} if
$f\colon \{1,2,3,\dots,n\}\to \Gamma_1$ is an isomorphism.  Note that $f$
`orders' the polygons in $\Gamma_1$ with $f(i)$ being the $i^{th}$ polygon.
To simplify notation, we will usually omit $f$ and use the terminology `the $i^{th}$
polygon' for $f(i)$ and say $\Gamma$ is \emph{ordered}.
We remark
that the assumption that  $\val(\alpha)\mu(\alpha)\le 2$ for all $\alpha$, implies
there is only one choice for $\mathfrak o$ and $\mu$ is determined by
the truncated vertices.    Namely, if $\alpha$ is a vertex
with $\val(\alpha)=2$ and $\mu(\alpha)=1$ and if $\alpha$ is in the
polygons $V$ and $V'$, with $V\ne V'$, then
the successor sequence at $\alpha$ must be $V<V'$ which is the same as $V'<V$.
If $\alpha$ is a vertex with $\val(\alpha)=1$,
$\alpha$ is either truncated or not.  If $\alpha$ is truncated,
$\mu(\alpha)=1$ and if not, $\mu(\alpha)=2$, and   $\alpha\in V$, then the successor sequence for $\alpha$
is given by $V$.   If $\alpha$ is truncated
then $\alpha$  has no successor sequence.

We say an ordered reduced \bc\  $\Gamma$ has \emph{no self-foldings} if, for each polygon $V$
in  $\Gamma_1$, there are no repeated vertices in $V$.

  Let   $\Gamma=	(\Gamma_0,\Gamma_1,\mu,\mathfrak o)$
and  $\Gamma'=	(\Gamma'_0,\Gamma'_1,\mu',\mathfrak o')$
be two ordered  reduced \bc s with no self-foldings such that, for each $\alpha \in \Gamma_0$ or $\alpha \in \Gamma'_0$,
we have $1\le\val(\alpha)\mu(\alpha)\le 2$.
We say $\Gamma$ is \emph{equivalent} to $\Gamma'$
if there is a set isomorphism $\epsilon\colon\Gamma_0\to\Gamma'_0$ such that
if $\{\alpha_1,\dots,\alpha_r\}$ is the $i^{th}$  polygon in $\Gamma_1$, then $\{\epsilon(\alpha_1),\dots,\epsilon(\alpha_r)\}$ is the $i^{th}$  polygon
in $\Gamma'_1$.
Effectively, two ordered reduced \bc s are equivalent  if they differ only in the
names of the vertices.

 It easily follows from the definition, that if $\Gamma$ and $\Gamma'$ are equivalent  \bc s then
the associated \bca s are isomorphic. This holds since the only
difference between $\Gamma$ and $\Gamma'$ is the labeling of the
vertices.

Let $\cB_n$ be the set of equivalence classes of ordered reduced \bc s $\Gamma$ satisfying
\begin{enumerate}
\item $\Gamma$ has no self-foldings,
\item $\Gamma$ has exactly $n$ polygons, and
\item    $1\le \val(\alpha)\mu(\alpha)\le 2$, for all  $\alpha\in\Gamma_0$.
\end{enumerate}

Before stating our next result, we introduce some terminology.
Let $G= (G_0,G_1)$ be a representative of an element of $\cG_n$.
A vertex in $G$ is called a \emph{leaf} if it has valency $1$.
If $i$ is a leaf and $i$ is an endpoint of the edge $e$,  we say
$e$ is the \emph{leaf edge associated to $i$}.

\begin{prop}\label{prop-equiv}
The set $\cB_n$ is in one-to-one correspondence
with the sets $\cG_n$ and $\cM_n$, for all strictly positive integers $n$.
\end{prop}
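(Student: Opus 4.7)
The plan is to invoke Lemma~\ref{lem-equiv} for the correspondence $\cG_n \leftrightarrow \cM_n$ and to construct an explicit bijection $\Phi\colon \cB_n \to \cG_n$ directly.

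For the forward map, given $\Gamma = (\Gamma_0, \Gamma_1, \mu, \mathfrak{o}) \in \cB_n$ with polygons ordered $V_1, \ldots, V_n$, I would take $\Phi(\Gamma)$ to be the graph on vertex set $\{1, \ldots, n\}$ whose edges are in bijection with the nontruncated vertices of $\Gamma$. A nontruncated $\alpha$ with $\val(\alpha) = 2$ (and hence $\mu(\alpha) = 1$) lies in two polygons $V_i, V_j$, distinct by the no-self-folding hypothesis, and contributes an edge between $i$ and $j$; a nontruncated $\alpha$ with $\val(\alpha) = 1$ and $\mu(\alpha) = 2$ lies in a unique polygon $V_i$ and contributes a loop at $i$. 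Condition C3 guarantees that no vertex of $\Phi(\Gamma)$ is isolated, so $\Phi(\Gamma) \in \cG_n$.

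For the inverse $\Psi\colon \cG_n \to \cB_n$, given $G = (G_0, G_1)$ I would set $\Gamma_0 := G_1 \sqcup T$, where $T$ contains one fresh symbol $t_i$ for each $i \in G_0$ that is incident to a single edge of $G$ (whether a loop or not); define the $i$-th polygon $V_i$ to consist of the edges of $G$ incident to $i$, each loop counted only once to respect no self-folding, together with $t_i$ when present; declare $\mu(\alpha) = 2$ precisely when $\alpha$ is a loop of $G$ and $\mu(\alpha) = 1$ otherwise; and equip $\Gamma$ with the unique orientation allowed by $\val(\alpha)\mu(\alpha) \le 2$, whose forcedness is noted just after the definition of $\cB_n$. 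Verifying $\Psi(G) \in \cB_n$ is then a routine check of C1--C3, reducedness, the no-self-folding property, and the valency bound; the role of the auxiliary $t_i$ is precisely to enforce C2 at vertices $i$ that would otherwise produce a $1$-gon.

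It then remains to verify $\Phi \circ \Psi = \Id_{\cG_n}$ and that $\Psi \circ \Phi$ is the identity on $\cB_n$ up to the permitted relabeling of vertices. Both directions follow by unwinding the constructions: edges and loops of $G$ are recovered as the nontruncated vertices of $\Psi(G)$ with the correct incidences, while the truncated vertices of $\Gamma$ — all residing in $2$-gons by reducedness and the valency bound — are recovered up to relabeling by the freshness construction in $\Psi$. The main subtlety will be keeping the ordering of polygons aligned with the labeling of vertices of $G$; the two $4$-cycle example preceding the statement shows that without this bookkeeping non-equivalent configurations collapse to the same matrix. A secondary point requiring care is the coherent treatment of loops, which must contribute a single vertex (not two) to a polygon, carry multiplicity $2$, and also trigger insertion of a truncated vertex when they are the only edge at their endpoint.
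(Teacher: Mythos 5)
Your proposal is correct and follows essentially the same route as the paper: the graph is built from the nontruncated vertices of the configuration (valency-$2$ vertices giving edges, valency-$1$ multiplicity-$2$ vertices giving loops), and the inverse takes $\Gamma_0 = G_1$ plus auxiliary truncated vertices to prevent $1$-gons, with the forced orientation and multiplicity. The only cosmetic difference is that you fold the paper's separately treated special case (a component consisting of one vertex with one loop) into the general construction via your criterion ``incident to a single edge,'' which is a slight streamlining rather than a different argument.
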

\begin{proof} We show that $\cB_n$ is in one-to-one correspondence with $\cG_n$.

Let $\Gamma=(\Gamma_0,\Gamma_1,\mathfrak o,\mu)$ be an ordered,
reduced \bc\ which represents
an equivalence class in $\cB_n$.
 We construct a graph  $G(\Gamma)$ which represents an equivalence class in $\cG_n$ as follows. For
$i=1,\dots n$, let $V_i$ be the
$i^{th}$ polygon in $\Gamma_1$.  The vertex set of $G(\Gamma)$ is given by $\{1,\dots, n\}$ with the polygon $V_i$ in $\Gamma$ corresponding
to the vertex $i$ in $G(\Gamma)$.
The number  of edges between vertices $i$ and $j$ in $G(\Gamma)$, where $i\ne  j$, is equal to the number
of vertices $\alpha\in\Gamma_0$ such that  $\alpha\in V_i\cap V_j$.
The number of loops at vertex $i$
in $G(\Gamma)$ is equal to the number of vertices $\alpha\in\Gamma_0$ such that  $\alpha\in V_i$,
and  $ \mu(\alpha)=2$.   From $\Gamma$, we have constructed
a graph $G(\Gamma)$.     The reader may check that if $\Gamma'$ is
another \bc\ in the same equivalence class of $\cB_n$ as $\Gamma$, then
$G(\Gamma)$ and $G(\Gamma')$ are in the same equivalence class in $\cG_n$.
Thus, we have a well-defined map from $\cB_n$ to $\cG_n$.

We will construct the inverse  map from $\cG_n$ to  $\cB_n$.   We begin with
a special case.  Let  $G$ be the graph with one vertex  $\{1\} $ and one loop
at  $ 1$.  Set  $\Gamma(G)=(\{\alpha,\beta\}, V=\{\alpha,\beta\},\mu,\mathfrak o)$
where $\mu(\alpha)=1$ and $\mu(\beta)=2$.  Note that there is only one
 orientation since
there is just one polygon.  The vertex $\alpha$ is truncated. Then we send the equivalence class of $G$ to the equivalence
class of $\Gamma(G)$.  It is clear that $G(\Gamma(G))$ is in the same equivalence
class as $G$ in  $\cG_n$.

Now let $G = (G_0, G_1)$ be a graph in an equivalence class in $\cG_n$.
Using the special case above, we may assume that   $G$ has no connected
components consisting of a vertex and a loop.
Define a \bc\  $\Gamma(G)
=(\Gamma_0,\Gamma_1,\mu, \mathfrak o)$  representing an equivalence class in $\cB_n$ as follows. 
The vertex set of $\Gamma(G)$ is 
$$\Gamma(G)_0 = G_1\cup \{(e,i)\mid i \text{ is a leaf and }e\text{ is the leaf-edge
associated to }i\}.$$  The set of polygons $\{V_1,\dots, V_n\}$  is in bijection with the set $G_0$ of vertices of $G$,
where $V_i=\{e\in G_1\mid i \text{ is an endpoint of }e\}$ if $i$ is
not a leaf and $V_i=\{e, (e,i)\}$ where $i$ is a leaf and $e$ is the leaf edge
associated to $i$. If $e$ is a loop at vertex $i$ in $G$, then
$V_i$ contains $e$ as an element once  (since
we are constructing \bc s with no self-foldings;  as an aside,   Example \ref{ex-r30} shows that if we allow
self-foldings, there could be more than one Brauer configuration associated to a graph $G$).
Note that the valency of a vertex in $\Gamma(G)$
is at most 2 since an edge in $G$ has at most two endpoints.  Therefore there are at most two polygons at any vertex of $\Gamma(G)$ and there is a unique cyclic order
$\mathfrak o$.  To clarify notation, if $e$ is an edge in
$G$ then we will write $\bar e$ instead of $e$ for the vertex in $\Gamma$.
We define the multiplicity function as follows. If $e$ is an edge of $G$
and $e$ is not a loop, then $\mu(\bar e)=1$. If $e$ is a loop,
$\mu(\bar e)=2$.  Finally set $\mu(e,i)=1$.   We see that the
vertices of $\Gamma(G)$ of the form $(e,i)$ are precisely the truncated vertices
of $\Gamma(G)$.
Moreover, it is clear that $\Gamma(G)$ is ordered and reduced.
 Thus we have constructed a \bc\
$\Gamma(G)$.  The reader may check that sending the equivalence class of $G $ to the equivalence
class of $\Gamma(G)$ is a well-defined map and inverse to the first map.
\end{proof}

For a given graph $G$ in an equivalence class of $\cG_n$, call the \bc s  constructed in the proof of Proposition~\ref{prop-equiv} above,
  the \emph{Brauer
configuration   associated to the
graph $G$} and denote it by $\Gamma(G)$.

\begin{Example}\label{ex-gph-bc}{\rm
 To help clarify the above proof we provide examples of both constructions. Namely in the first instance, given a Brauer configuration we construct a representative of the associated
graph equivalence class. Secondly, given a graph, we construct a representative of the corresponding Brauer configuration.

a) Let   $\Gamma=( \Gamma_0,
\Gamma_1,\mu,\mathfrak o)$ be a \bc\ where $\Gamma_0=\{
1,2,\dots, 10\}$, $\Gamma_1=\{V_1,\dots, V_6\}$, with $V_1=\{1,2,3\},  V_2=\{3,4,5\},
V_3=\{1,5\}, V_4=\{2,4,6,7,9\}, V_5=\{7,8\},$ and $V_6=\{9,10\}$, $\mu(8)=2 =\mu(6)$, and $\mu(i)=1$, for $i\ne 6, 8$.  Note that since at each vertex of $\Gamma$ the valency is
$\le 2$, there is a unique orientation  $\mathfrak o$.
We further remark that
neither vertices 6 nor 8 are truncated since their multiplicities are greater than 1, however,  vertex 10  is
truncated.  A realization of
$\Gamma$ is given by 

\begin{center}
\includegraphics[scale=.5]{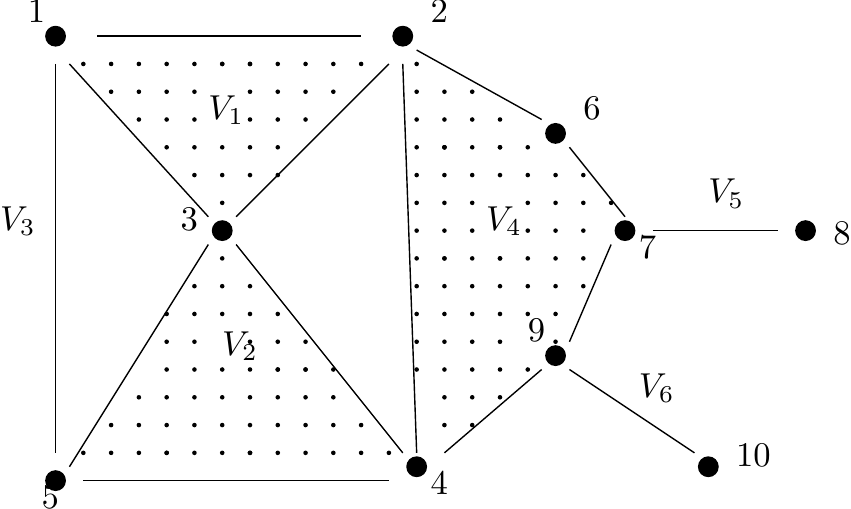}
\end{center}

The reader may check that the equivalence class of the graph $G(\Gamma)$ as constructed in the theorem  can
be represented by 
\begin{center}\includegraphics[scale=.5]{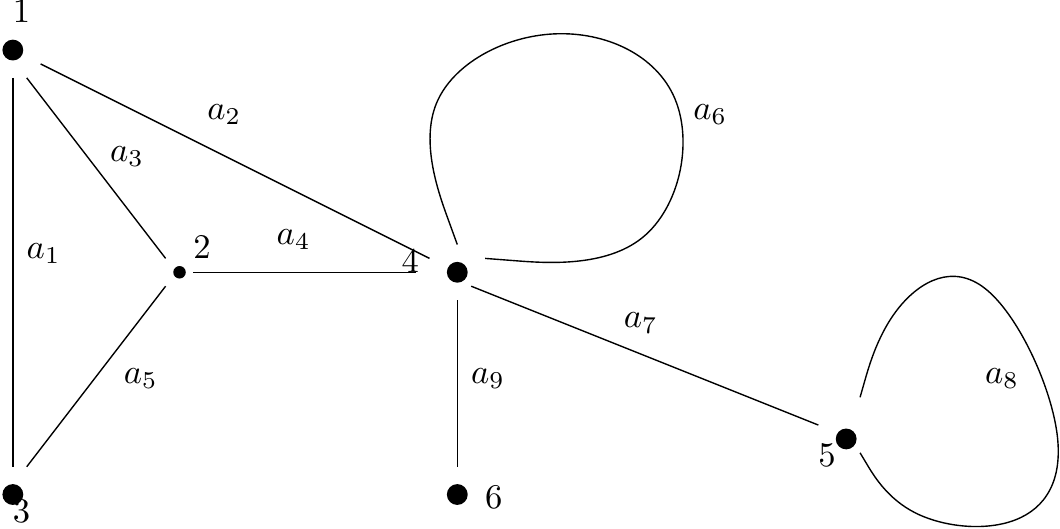}
\end{center}


 Note that the edges are
labelled $a_i$, where $i$ is the vertex in  $\Gamma$ yeilding $a_i$.  Thus, since vertex 1
in $\Gamma$ is in $V_ 1 \cap V_3$, we have an edge $a_1$ between $1$ and $3$.
Similarly, for example, since vertices 6 and 8 in $\Gamma$ has multiplicity 2, we get
the loops $a_6$ and $a_8$.  Note that since $10$ is truncated in $\Gamma$, there
is no $a_{10}$ and vertex 6 of $G(\Gamma)$ is a leaf.

As a second example, we start with a graph $G$ representing an equivalence class
in $\cG_6$ 
\begin{center}
\includegraphics[scale=.5]{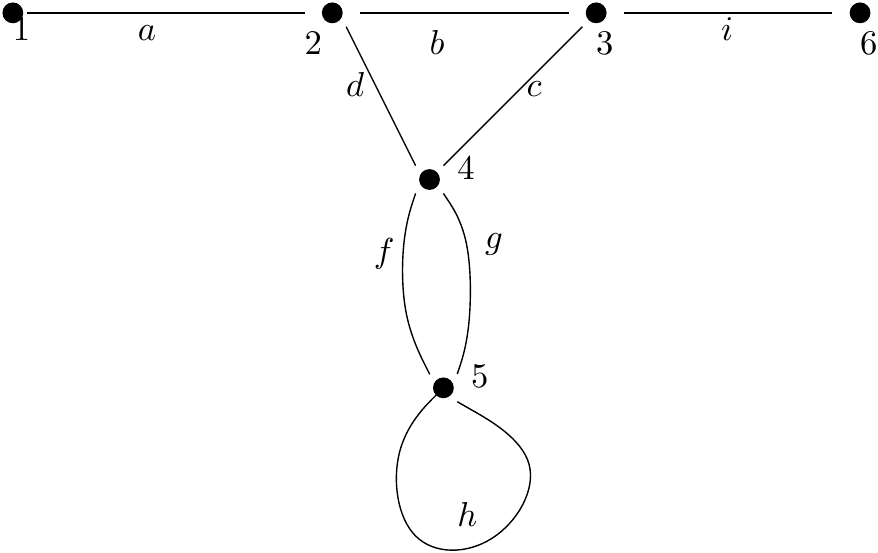}
\end{center}

and construct the associated \bc, which is

\begin{center}
\includegraphics[scale=.8]{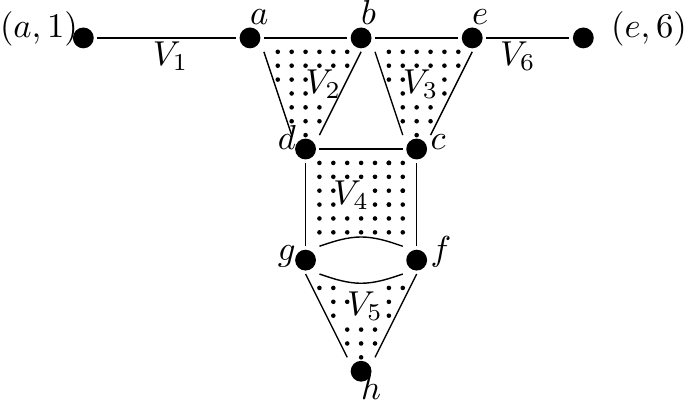}
\end{center}

Note that the two leaves at vertices $1$ and $6$ in $G$, result in two truncated
vertices in $\Gamma(G)$; namely $(a,1)$ and $(e,6)$,  respectively.
}
\end{Example}

Given a graph $G$ representing an equivalence
class in $\cG_n$, there is a standard construction of a radical cubed
zero symmetric algebra which we now recall.
 We start by constructing  a quiver, $\cQ_G$, associated to $G$ as
follows.  The vertex set of  $\cQ_G$ consists of  $n$ vertices labeled $\{1,\dots, n\}$ and, for each edge $e$
in $G_1$ that is not a loop, and such that $e$ has endpoint vertices $i$ and $j$ in $G_0$,
there are two arrows $a$ and $a'$ in $\cQ_G$
such that  $a$ is an arrow from  $i$
to $j$ and $a'$ an
an arrow from $j$ to $i$.  For each loop $e$ at vertex $i$ in  $G$, the quiver $\cQ_G$ has a loop $b$
at vertex
$i$.

We call $aa'$, $a'a$, and $b^2$
\emph{distinguished paths} in $\cQ_G$.

 Let $I_G$ be the ideal in $K\cQ_G$ generated by
\begin{enumerate}
\item  all $p-q$ where $p$ and $q$ are distinguished path with the
same start and end vertices.
\item all $aba$,
for all arrows $a, b$ in $\cQ$ such that $ab$ is a distinguished path.
\item all non-distinguished paths of length $2$.
\end{enumerate}

 Note that if $G'$ is a graph equivalent to $G$ then   $\cQ_G =\cQ_{G'}$  and
$I_G = I_{G'}$.

Keeping the notation above, we have the following result.

\begin{lemma}\label{lem-r3} Let  $G $ be a graph
representing  an equivalence class in $\cG_n$. Then  $K\cQ_G/I_G$ is a symmetric
$K$-algebra with radical cubed zero.
\end{lemma}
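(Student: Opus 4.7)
The plan is to prove both assertions by direct manipulation in the path algebra: first show $\rad^3 = 0$ (which together with $I_G \subseteq J^2$ also yields finite dimensionality), then exhibit an explicit symmetrizing linear form.

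For the radical cubed zero assertion, I would show that every path of length three in $\cQ_G$ lies in $I_G$. Fix such a path $a_1 a_2 a_3$. If either of the length-two subpaths $a_1 a_2$ or $a_2 a_3$ is non-distinguished, then it lies in $I_G$ by a generator of type (3), so the whole length-three path does. Otherwise both subpaths are distinguished, and a short case analysis using the definition of distinguished paths (either $aa'$ or $a'a$ for a pair of opposite arrows of a non-loop edge, or $b^2$ for a loop) forces $a_3 = a_1$, in which case $a_1 a_2 a_3$ is exactly a generator of type (2). Combined with $I_G \subseteq J^2$ (since all generators of $I_G$ have length at least two), this shows $I_G$ is admissible, the quotient is finite dimensional, and $\rad^3(K\cQ_G/I_G) = 0$.

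For the symmetric assertion, I would first note that since every vertex of $G$ has valency at least one, each vertex $i$ of $\cQ_G$ is the start (and end) of some distinguished length-two path; relation (1) identifies all such paths from $i$ to $i$ in the quotient, producing a single nonzero class which I denote $z_i$. A $K$-basis of $A := K\cQ_G/I_G$ is then $\{e_i\} \cup \{\bar a : a \in (\cQ_G)_1\} \cup \{z_i : i \in G_0\}$, and the $z_i$ span the two-sided socle. Define $\phi\colon A \to K$ by $\phi(z_i) = 1$ and $\phi(e_i) = \phi(\bar a) = 0$. The symmetry identity $\phi(xy) = \phi(yx)$ reduces by bilinearity to length-two products of arrows, where it holds because $\bar p \bar q$ is a distinguished path (equal to some $z_i$) if and only if $\bar q \bar p$ is; all products of higher total length vanish. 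Non-degeneracy follows by case analysis on the support of a nonzero $x \in A$: multiplying $x$ by an appropriate idempotent $e_i$, opposite arrow, or socle element $z_i$ always produces a nonzero socle component, hence a nonzero value under $\phi$.

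I expect the main technical obstacle to be the case analysis in the length-three step: when both $a_1 a_2$ and $a_2 a_3$ are distinguished, one must separately treat the non-loop case (where $a_2$ is the partner $a_1'$ of $a_1$, and the only arrow $a_3$ whose composition with $a_2$ is distinguished is the partner of $a_2$, namely $a_1$) and the loop case ($a_1 = a_2 = b$ and hence $a_3 = b$), verifying in both that the resulting path is one of the generators of type (2) as written. The same symmetry of distinguished pairings is what then drives the symmetry of $\phi$, so both parts of the lemma hinge on this single structural feature of the relations defining $I_G$.
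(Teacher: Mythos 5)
Your proposal is correct and follows essentially the same route as the paper: the paper likewise declares the radical-cube-zero and self-injectivity checks routine and then defines exactly your linear form (the map sending the class of each distinguished path to $1$ and all other basis elements to $0$), verifying symmetry and that its kernel contains no one-sided ideals. You have simply filled in the "routine" details — the length-three case analysis and the explicit basis $\{e_i\}\cup\{\bar a\}\cup\{z_i\}$ — which the paper leaves to the reader.
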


\begin{proof} Let $\pi\colon K\cQ_G\to K\cQ_G/I_G$ be the
canonical surjection.
It is  routine to check that $K\cQ_G/I_G$ is a selfinjective
algebra with radical cubed zero.

To see that $K\cQ_G/I_G$ is symmetric,
define $f\colon K\cQ_G /I_G \to K$
 as follows. If $p$ is a path in $K\cQ_G$, let
\[
f(\pi(p))= \begin{cases} 
1,&\text{if }p\text{ is a distinguished path},\\
0,&\text{in all other cases.}
\end{cases}
\]
It is straightforward to show that the kernel
of $f$ contains no two-sided ideals and that $f(xy)=f(yx)$,
for all $x, y\in K\cQ_G/I_G$.
\end{proof}

We call $A_G = K\cQ_G/I_G$ the \emph{canonical symmetric algebra with
radical cubed zero associated
to $G$}.	

We now present the main result of this section.

\begin{thm}\label{thm-rad3}
 Let $G$ be a representative of an equivalence class
in $\cG_n$ and let $\Gamma(G)$  be the \bc\ associated to $G$ with corresponding \bca\ $\Lambda_{\Gamma(G)}$. Furthermore, let $A_G$ be the canonical symmetric algebra
with radical cubed zero associated to $G$.  Then    $\Lambda_{\Gamma(G)}$
 and  $A_G$ are isomorphic as $K$-algebras.
\end{thm}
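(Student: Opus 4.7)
The plan is to construct an explicit $K$-algebra isomorphism $\phi\colon A_G\to \Lambda_{\Gamma(G)}$ that realizes an isomorphism of the presenting quivers and sends the ideal $I_G$ onto the ideal $I_{\Gamma(G)}$. First, I would set up the bijection on vertex sets: vertex $i$ of $\cQ_G$ corresponds to the polygon $V_i$ in $\Gamma(G)$, and hence to the vertex $v_i$ of $\cQ_{\Gamma(G)}$. For the arrows, each edge $e \in G_1$ contributes a nontruncated vertex $\bar e$ to $\Gamma(G)$, and I would analyze two cases. If $e$ is a non-loop with endpoints $i\ne j$, then $\val(\bar e) = 2$, $\mu(\bar e)=1$, and the successor sequence at $\bar e$ is $V_i<V_j$; this produces exactly two arrows $v_i\to v_j$ and $v_j\to v_i$ in $\cQ_{\Gamma(G)}$, matching the two arrows $a, a'$ in $\cQ_G$ coming from $e$. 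If $e$ is a loop at $i$, then $\val(\bar e)=1$, $\mu(\bar e)=2$, and the successor sequence at $\bar e$ is just $V_i$; this produces a single loop at $v_i$, matching the loop $b$ at $i$ in $\cQ_G$ coming from $e$. These assignments define a quiver isomorphism $\cQ_G\cong \cQ_{\Gamma(G)}$.

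Next, I would identify the distinguished paths of $A_G$ with the elements $C^{\mu(\bar e)}$ coming from the special cycles of $\Lambda_{\Gamma(G)}$. Concretely, for a non-loop edge $e$ with endpoints $i,j$, the special $\bar e$-cycle at $v_i$ is $aa'$ of length $2$ and $\mu(\bar e)=1$, so $C^{\mu(\bar e)} = aa'$; for a loop edge $e$ at $i$, the special $\bar e$-cycle at $v_i$ is $b$ of length $1$ and $\mu(\bar e) =2$, so $C^{\mu(\bar e)}=b^2$. Thus the distinguished paths $aa'$, $a'a$, $b^2$ in $\cQ_G$ correspond precisely to the elements $C^{\mu(\bar e)}$ for $\bar e$ ranging over nontruncated vertices of polygons in $\Gamma(G)$.

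With this dictionary I would verify that the three types of relations match under $\phi$. For type one: a relation $C^{\mu(\bar e)} - (C')^{\mu(\bar e')}$ at $v_i$ in $I_{\Gamma(G)}$ arises from two distinct nontruncated vertices $\bar e, \bar e'$ of $V_i$, that is, from two distinct edges of $G$ incident to $i$; this is exactly the type (1) relation of $I_G$ equating two distinguished paths with the same source and target $i$. For type two: a relation $C^{\mu(\bar e)}a$ in $I_{\Gamma(G)}$ equals either $aa'a$ (non-loop case) or $b^3$ (loop case), both of which are of the form $aba$ with $ab$ distinguished, i.e.\ type (2) relations in $I_G$. For type three: the quadratic monomials $ab$ in $\cQ_{\Gamma(G)}$ that are not subpaths of a special cycle (with the loop exception allowing $b^2$ for a valency-$1$, $\mu = 2$ loop) are precisely the length-$2$ paths in $\cQ_G$ that are not distinguished, which is type (3) of $I_G$. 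Leaf vertices of $G$ correspond to polygons $V_i$ with a truncated vertex, and contribute no extra type one relations on either side, so the match is consistent. Finally, I would dispose of the special component case of a single vertex with a single loop by direct inspection: both $A_G$ and $\Lambda_{\Gamma(G)}$ are then isomorphic to $K[x]/(x^3)$.

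The main obstacle is the careful bookkeeping across the cases loop/non-loop and leaf/non-leaf, in particular ensuring that the exception for $b^2$ in the type three relations of $\Lambda_{\Gamma(G)}$ aligns precisely with $b^2$ being a distinguished path (and hence \emph{not} a relation) in $A_G$. Once this case analysis is done and the bijection on arrows and the identification of distinguished paths with the $C^{\mu(\bar e)}$ are in place, the equality $\phi(I_G) = I_{\Gamma(G)}$ is immediate and the induced isomorphism $A_G\cong \Lambda_{\Gamma(G)}$ follows.
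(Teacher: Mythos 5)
Your proposal is correct and follows essentially the same route as the paper: identify the quivers via the vertex--polygon correspondence and the loop/non-loop case analysis on edges, dispose of the single-vertex-single-loop component separately, and match the three types of relations to the defining relations of the canonical algebra (a matching the paper declares ``easy to see'' and you spell out via the dictionary between distinguished paths and the $C^{\mu(\bar e)}$). No gaps.
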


\begin{proof}
For ease of notation set $\Gamma =  \Gamma(G)$,  $\Lambda = \Lambda_{\Gamma(G)}$ and $A= A_G$.
  The quiver $\cQ_{\Lambda}$ of $\Lambda$ has vertex set $\{v_1,\dots,v_n \}$ corresponding
to the set of polygons $\{V_1,\dots,V_n\}$.  Recall that the vertex set $\Gamma_0$ of $\Gamma$ is given by the edge set
$G_1$ of $G$ together with the truncated vertices $(e,i)$, where $i$ is a leaf and
$e$ is the leaf edge associated to $i$.

For the special case where $G$ has one vertex and  one loop at that vertex,
both $\Lambda$ and $A$ are isomorphic to $K[x]/(x^3)$  since  the \bc\ associated
to $G$ is given in the proof of Proposition \ref{prop-equiv} is a 2-gon with one
vertex truncated and the other has multiplicity 2.

Thus we may assume that $G$ has no connected component consisting of one vertex and
 one loop.
 Let $e\in\Gamma_0$ where $e$ is an edge in $G$. To avoid confusion, we will
write $\bar e$ when $e$ is a vertex in $\Gamma$ and $e$ when $e$ is an
edge in $ G$.  Then, as an edge in $G$, $e$ either is a loop or not.  If $e$ is a loop in $G$, then let $i$ be the vertex in $G_0$ that is the endpoint of $e$.  From our construction it then follows that, as a vertex in $\Gamma_0$,
$\bar e$ has valence $1$ in $\Gamma$, that $\bar e\in V_i$ and $\mu(\bar e)=2$.
But then in ${\Gamma}$, the  successor sequence at $\bar e$ is $V_i$ and it gives rise to a loop
at $v_i$ in $\cQ_{\Lambda}$.   On the other hand, if $e$ is not a loop in $G$, then $e$ has two endpoints $i$ and $j$, with  
$1\le i \neq   j\le n$.   It follows from our construction of $\Gamma$,  that $\bar e$ as a vertex in $\Gamma_0$ is such that
$\val(\bar e)=2$ with $\bar e\in V_i$ and $\bar e\in V_j$.  Furthermore, the successor sequence at $\bar e$ is
$V_i<V_j$.  It follows that $\bar e$  gives rise to two arrows in $\cQ_{\Lambda}$, namely one arrow from $i$ to $j$ and one from $j$ to $i$.  Recall that the truncated vertices of $\Gamma$
do not influence $\cQ_{\Lambda}$.

Let $\cQ_A$ be the quiver of the canonical symmetric algebra with radical cubed zero  $A$ associated to $G$.
 It clearly follows from the construction of $\cQ_\Lambda$ and $\cQ_G$  that by sending $v_i$ to $i$  and an arrow from $v_i$ to $v_j$ to an arrow from $i$ to $j$, we obtain an
isomorphism  $\cQ_{\Lambda}\to \cQ_G$.
Let $I_{\Gamma}$ be the ideal of relations of the \bca\ associated to $\Gamma$
and $I_G$ be the ideal of relations of   $A$.
It easy to see that, under the isomorphism  $\cQ_{\Lambda}\to \cQ_A$, the type one, two and three relations obtained from
the \bc\ $\Gamma$  map isomorphically
to the relations (1), (2), and (3) for  the canonical symmetric algebra with radical cubed zero associated to $G$
given above. This completes the proof.

\end{proof}

One might ask if every radical cubed zero symmetric algebra  is the canonical symmetric algebra with radical cubed zero associated to a graph $G$.  The answer is no as shown by
the following example  of a graph to which we can associate two non-isomorphic symmetric algebras with radical cubed zero.

\begin{Example}\label{ex-r30}{\rm
We give an example of a graph $G$ in $\mathcal G_1$ and two symmetric algebras  associated to this graph, one of them being the canonical symmetric algebra with radical cube zero associated to $G$, the other one being a Brauer configuration algebra where the Brauer configuration has a self-folding.  These two algebras are isomorphic
if $K$ contains an element $i$ such that $i^2= -1$ and they are non-isomorphic otherwise.

Let $G$ be the graph with one vertex and two loops.

The
canonical symmetric algebra associated to  $G$ is
$A=K[x,y]/(xy, x^2-y^2)$, where $K$ is a field and
$K[x,y]$ is the commutative polynomial ring in two
variables.   A \bc\ such that the associated \bca\
is isomorphic to $A$ is $\Gamma=(\{\alpha,\beta\}, V=\{\alpha,
\beta\},\mu,\mathfrak o)$ with $\mu(\alpha)=\mu(\beta)=2$.  In fact, this corresponds to the Brauer graph algebra associated to the
Brauer graph given by a single edge where both vertices have multiplicity two.

Now consider the algebra $A'=K[x,y]/(x^2,y^2)$.  Then
$A'$ is a symmetric radical cubed zero algebra.   The algebra $A'$
is isomorphic to the \bca\ associated to the \bc\
$\Gamma'=(\{\alpha\}, V=\{\alpha,\alpha\},\mu, \mathfrak o  )$
with $\mu(\alpha)=1$.  Remark that $\Gamma'$ has
 a self-folding.  The algebra $A'$ also corresponds
 to a Brauer graph algebra.
Its  Brauer graph
is given by a loop with multiplicity one at the only vertex.

Note that the matrix associated to both $A$ and $A'$ is
the $1\times 1$ matrix $(2)$.

 Now suppose that $i \in K$ with $i^2 =-1$. Then the map defined by
$x \mapsto x +iy,  y \mapsto x-iy$ induces an isomorphism from $A'$ to $A$. If, however, $K$ does not contain an element squaring to $ -1$ then the algebras $A$ and $A'$ defined above are not isomorphic.}
\end{Example}
\vskip .1in

 Example~\ref{ex-r30} leads to the question:
What can one say about an indecomposable symmetric
radical cubed zero algebra of the form $K\cQ/I$.  This question
is addressed  in \cite{GreenSchroll} where it  is shown that  every such
algebra is a \bca, when one allows self-foldings in the Brauer configuration.

\bibliographystyle{plain}

\end{document}